\theoremstyle{plain}
  \newtheorem{thm}{Theorem}[section]
  \newtheorem{lem}[thm]{Lemma}
  \newtheorem{cor}[thm]{Corollary}
  \newtheorem{prop}[thm]{Proposition}
\theoremstyle{definition}
  \newtheorem{defn}[thm]{Definition}
  \newtheorem{prob}[thm]{Problem}
\theoremstyle{remark}
  \newtheorem*{ack}{Acknowledgment}
\numberwithin{equation}{section}
\DeclareMathOperator{\diam}{diam}
\DeclareMathOperator{\supp}{supp}
\DeclareMathOperator{\ObsDiam}{ObsDiam}
\DeclareMathOperator{\id}{id}
\DeclareMathOperator{\dconc}{{\it d}_{{\rm conc}}}
\DeclareMathOperator{\dKF}{{\it d}_{{\rm KF}}}
\DeclareMathOperator{\dP}{{\it d}_{{\rm P}}}
\DeclareMathOperator{\dH}{{\it d}_{{\rm H}}}
\DeclareMathOperator{\Aut}{Aut}
\DeclareMathOperator{\Dom}{Dom}
\DeclareMathOperator{\Imag}{Im}
\newcommand{\Lip}{\mathcal{L}{\it ip}}
\newcommand{\cX}{\mathcal{X}}
\newcommand{\eq}{{\rm eq}}
\newcommand{\field}[1]{\mathbb{#1}}
\newcommand{\C}{\field{C}}
\newcommand{\R}{\field{R}}
\newcommand{\Z}{\field{Z}}
\begin{document}

\title[Convergence of group actions]
{Convergence of group actions\\ in metric measure geometry}

\begin{abstract}
We generalize the box and observable distances
to those between metric measure spaces with group actions,
and prove some fundamental properties.
As an application, we obtain an example of a sequence of lens spaces
with unbounded dimension converging to the cone of
the infinite-dimensional complex projective space.
Our idea is to use the theory of mass-transport.
\end{abstract}

\thanks{This work was supported by JSPS KAKENHI Grant Number 19K03459 and 19J10866.}

\author{Hiroki Nakajima}
\address{Institute for Excellence in Higher Education, Tohoku University,
Sendai 980-8576, Japan}
\email{hiroki.nakajima.a1@tohoku.ac.jp}

\author{Takashi Shioya}
\address{Mathematical Institute, Tohoku University, Sendai 980-8578, JAPAN}
\email{shioya@math.tohoku.ac.jp}

\date{\today}

\keywords{box distance, observable distance, group action}

\subjclass[2010]{53C23}

\maketitle

\section{Introduction}

Fukaya \cite{Fk:Theory} first introduced the concept of
convergence of metric spaces with isometric group actions,
which was applied to the study of the collapsing theory
and fundamental group (see \cite{FY:f-gp}).
Recently, it is further developed in
the study of dynamical system
(see for example \cite{AR:top-stab}).
In this paper, we study convergence of metric measure spaces
(mm-spaces)
with measure preserving and isometric group actions.
We generalize the box and observable distances
due to Gromov \cite{Gmv:green}
to those between mm-spaces with group actions,
and study their properties.
Our main results are the following.

\begin{enumerate}
\item We define the equivariant box and observable metric and
prove that they are really metrics
(Theorem \ref{thm:box-dist} and \ref{thm:dconc}).
\item We prove that if a sequence of metric measure spaces with group actions
converges with respect to the equivariant box/observable metric, 
then so does the sequence of the quotient spaces
(Theorem \ref{thm:box-quot} and \ref{thm:conc-quot}).
\item We prove that the projection from the set of mm-spaces
with group actions to the set of mm-spaces
(i.e., the projection dropping group actions)
is a proper map (Theorem \ref{thm:subconv}).
\item As an application, we have an example of a sequence of lens spaces with unbounded dimension converging to the cone of
the infinite-dimensional complex projective space in the box metric
(Theorem \ref{thm:lens}).
\end{enumerate}

For (1), we remark that the nondegeneracy of the metrics
is not so easy to prove even for mm-spaces without
group actions.
Our idea is to use the mass-transport theory to formulate
the equivariant box and observable metrics.
Such a formulation enables us to prove the nondegeneracy.

The key point of (2) is to prove the precompactness
of groups acting on metric measure spaces with respect to the Ky Fan metric
by using the thick-thin decomposition of the spaces (see Lemmas \ref{lem:Xneps} and \ref{lem:Gn-precpt}).
It is delicate that the groups do not Gromov-Hausdorff converge
to the group acting on the limit space with respect to the
Ky Fan metric.

(1)--(3) was proved in the case of metric spaces
without measures in \cites{Fk:Theory,FY:f-gp}.
We emphasize that the proofs in our case are much more delicate.

(4) is an interesting example of convergence
which is only obtained for metric measure spaces
and not for metric spaces.

\begin{ack}
The authors would like to thank Daisuke Kazukawa
for pointing out some minor mistakes in an earlier version
of this paper and for valuable comments.
\end{ack}

\section{Preliminaries}

In this paper, a \emph{metric measure space},
an \emph{mm-space} for short, means a complete separable
metric space with a Borel probability measure.
We assume that the measure of an mm-space has
full-support unless otherwise stated.
For an mm-space $X$, the metric and measure of $X$ are
respectively denoted by $d_X$ and $\mu_X$.
Sometimes, the triple $(X,d_X,\mu_X)$ is called an mm-space.
Two mm-spaces $X$ and $Y$ are \emph{mm-isomorphic}
to each other if there exists an isometry $f : X \to Y$
with $f_*\mu_X = \mu_Y$, where $f_*\mu_X$ is the
push-forward measure of $\mu_X$ by $f$.

For finitely many sets
$X_i$, $i=1,2,\dots,n$, and for numbers $i_1,i_2,\dots,i_k \in \{1,2,\dots,n\}$, let
$p_{i_1i_2\dots i_k} : \prod_{i=1}^n X_i \to \prod_{j=1}^k X_{i_j}$
be the projection defined by
\[
p_{i_1i_2\dots i_k}(x_1,x_2,\dots,x_n) := (x_{i_1},x_{i_2},\dots,x_{i_k}), \qquad x_i \in X_i,\ i=1,2,\dots,n.
\]

Let $X$, $Y$, and $Z$ be mm-spaces.
Denote by $\Pi(X,Y)$ the set of couplings (or transport plan)
between $\mu_X$ and $\mu_Y$.
It is well-known that $\Pi(X,Y)$ is compact with respect to
the Prokhorov metric $\dP$.
For $\sigma \in \Pi(X,Y)$ and $\tau \in \Pi(Y,Z)$,
the \emph{gluing} $\tau \bullet \sigma \in P(X \times Y \times Z)$
is defined by
\[
\tau \bullet \sigma(A \times B \times C) :=
\int_B \sigma_y(A) \tau_y(C) \, d\mu_Y(y),
\]
where $\{\sigma_y\}_{y \in Y}$ and $\{\tau_y\}_{y \in Y}$
are the disintegrations respectively of
$\sigma$ and $\tau$ for the projections
$p_2 : X \times Y \to Y$ and $p_1 : Y \times Z \to Y$.
It holds that $(p_{12})_*(\tau\bullet\sigma) = \sigma$ and
$(p_{23})_*(\tau\bullet\sigma) = \tau$.
We also define
$\tau\circ\sigma := (p_{13})_*(\tau\bullet\sigma)
\in \Pi(X,Z)$.
We observe the associativity $(\upsilon \circ \tau) \circ \sigma = \upsilon \circ (\tau \circ \sigma)$.
For subsets $S \subset X \times Y$ and $T \subset Y \times Z$,
we define
$T \bullet S := (X \times T) \cap (S \times Z)$,
$T \circ S := p_{13}(T \bullet S)$,
$\Dom(S) := p_1(S)$,
$S^{-1} := \{\,(y,x) \mid (x,y) \in S\,\}$,
and $\Imag(S) := \Dom(S^{-1})$.
If $S$ and $T$ are Borel, then
$\Dom(S)$, $T\circ S$, and $\Dom(T\circ S)$ are all analytic sets
and measurable with respect to the completion of
any Borel measure.
We denote the completion of a Borel measure
by the same notation.
Let $\sigma \in \Pi(X\times Y)$ and $\tau \in \Pi(Y \times Z)$.
Since
$\mu_X(\Dom(S)) = \sigma(\Dom(S) \times Y) \ge \sigma(S)$,
we see
\begin{align*}
&\mu_X(\Dom(T \circ S)) \ge (\tau\circ\sigma)(T \circ S)
\ge (\tau\bullet\sigma)(T \bullet S)\\
&= 1-(\tau\bullet\sigma)((X \times T)^c \cup (S \times Z)^c)
\ge \tau(T) + \sigma(S) - 1,
\end{align*}
which is often used in this paper.

\section{Equivariant box distance}

In this section, we introduce the box distance
between mm-spaces with mm-actions
and prove a triangle inequality.

Let $X$ be an mm-space and $G$ a group.
An \emph{mm-action} of $G$ on $X$ is, by definition,
isometric action of $G$ preserving $\mu_X$.
Denote by $\Aut(X)$ the group of isometries of $X$
preserving $\mu_X$.
If an mm-action of $G$ on $X$ is effective,
then $G$ is identified with a subgroup of $\Aut(X)$.

\begin{defn}
Let $X$ and $Y$ be two mm-spaces and
let $G \subset \Aut(X)$ and $H \subset \Aut(Y)$
be subgroups.
We say that $(X,G)$ is \emph{equivariantly mm-isomorphic}
 to $(Y,H)$
and write $(X,G) \cong (Y,H)$ if there exists
an mm-isomorphism from $X$ to $Y$ that is
equivariant for some group-isomorphism from $G$ to $H$.
Denote by $\cX_\eq$ the equivariant mm-isomorphism classes
of $(X,G)$, where $X$ is any mm-space and $G$
is any \emph{closed} subgroup of $\Aut(X)$.
\end{defn}

We sometimes write $(X,G) \in \cX_\eq$,
which means that $(X,G)$ is a representative
of a class in $\cX_\eq$.

The following lemma is obvious and
the proof is omitted.

\begin{lem} \label{lem:Lip-conv}
Let $X$ be a complete separable metric space and $Y$ a metric space.
Let $\varphi_i : X \to Y$, $i=1,2,\dots$, be $C$-Lipschitz maps
for a constant $C \ge 0$,
and $X_0 \subset X$ a dense subset.
Then, if $\varphi_i$ converges pointwise on $X_0$ as $i\to\infty$, then $\varphi_i$ converges pointwise on the whole $X$.
\end{lem}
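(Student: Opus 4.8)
The plan is to fix an arbitrary point $x \in X$ and show that the sequence $(\varphi_i(x))_i$ converges in $Y$; since $x$ is arbitrary, this gives pointwise convergence on all of $X$. The only inputs will be the density of $X_0$, the common Lipschitz constant $C$, and the completeness of the target, so the whole argument is a single $\varepsilon$-estimate combining the triangle inequality with the Lipschitz bound. Note that neither the completeness nor the separability of $X$ is actually used; they are present only because in the applications $X$ is an mm-space.

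First I would reduce convergence to the Cauchy property. Fix $x \in X$ and $\varepsilon > 0$. We may assume $C > 0$, since for $C = 0$ every $\varphi_i$ is constant and the claim is immediate. By density of $X_0$, choose $x_0 \in X_0$ with $d_X(x, x_0) < \varepsilon/(3C)$. For all indices $i, j$, the triangle inequality and the $C$-Lipschitz property give
\begin{align*}
d_Y(\varphi_i(x), \varphi_j(x))
&\le d_Y(\varphi_i(x), \varphi_i(x_0)) + d_Y(\varphi_i(x_0), \varphi_j(x_0)) + d_Y(\varphi_j(x_0), \varphi_j(x)) \\
&\le 2C\, d_X(x, x_0) + d_Y(\varphi_i(x_0), \varphi_j(x_0)).
\end{align*}
Since $\varphi_i$ converges at the point $x_0 \in X_0$, the sequence $(\varphi_i(x_0))_i$ is Cauchy, so there is an $N$ with $d_Y(\varphi_i(x_0), \varphi_j(x_0)) < \varepsilon/3$ for all $i, j \ge N$. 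Together with $2C\, d_X(x, x_0) < 2\varepsilon/3$, this yields $d_Y(\varphi_i(x), \varphi_j(x)) < \varepsilon$ for $i, j \ge N$; hence $(\varphi_i(x))_i$ is Cauchy in $Y$.

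It then remains to pass from Cauchy to convergent, and this is the step I expect to be the only real point of the lemma: a Cauchy sequence converges precisely when the target is complete. Thus the conclusion holds as soon as $Y$ is complete, which is the case in every application in this paper. I would flag that completeness of $Y$ is genuinely essential here: for the $1$-Lipschitz maps $\varphi_i \colon [0,1] \to (0,1]$, $\varphi_i(t) := \max(t, 1/i)$, one has pointwise convergence on the dense set $(0,1]$ but no limit at $t = 0$ inside the incomplete target $(0,1]$. Under the standing completeness convention the Cauchy sequence $(\varphi_i(x))_i$ has a limit in $Y$, completing the proof.
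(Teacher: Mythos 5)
Your argument is correct and is certainly the intended one: the paper declares this lemma obvious and omits the proof, and your density-plus-Cauchy estimate (reduce to the point $x_0\in X_0$, pay $2C\,d_X(x,x_0)$ via the Lipschitz bound, use convergence at $x_0$) is the canonical way to fill it in. More valuable is your closing observation, which is a genuine catch rather than a side remark: as literally stated the lemma is false, because the completeness hypothesis sits on the wrong space. Your counterexample is valid: each $\varphi_i(t):=\max(t,1/i)$ maps $[0,1]$ into $(0,1]$, is $1$-Lipschitz, and converges pointwise on the dense subset $(0,1]$ of the complete separable space $[0,1]$, yet $\varphi_i(0)=1/i$ has no limit in the target $(0,1]$. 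What the argument needs, and all it needs, is completeness of $Y$; completeness and separability of $X$ play no role. This does no harm downstream: in the only place the lemma is used, namely the proof of Lemma \ref{lem:Aut-conv}, the maps are self-maps of an mm-space, so source and target coincide and are complete. Still, the hypothesis ``$Y$ a metric space'' should read ``$Y$ a complete metric space'' (and $X$ may then be an arbitrary metric space with $X_0\subset X$ dense).
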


We denote the Prokhorov metric by $\dP$
and the Ky Fan metric by $\dKF$.

\begin{lem} \label{lem:Aut-conv}
Let $X$ be an mm-space.
For a sequence of elements of $\Aut(X)$,
the following {\rm(i)---(iii)} are equivalent to each other.
\begin{enumerate}
\item[(i)] The sequence converges uniformly on compact sets.
\item[(ii)] The sequence converges pointwise.
\item[(iii)] The sequence converges in measure.
\end{enumerate}
Moreover,  $\Aut(X)$ is compact with respect to
the Ky Fan metric.
\end{lem}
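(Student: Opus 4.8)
The plan is to first settle the equivalence of (i)--(iii) by elementary measure theory together with the equicontinuity coming from the isometry property, and then to deduce $\dKF$-compactness from the $\dP$-compactness of $\Pi(X,X)$ recorded in the preliminaries. Throughout I write $\mu := \mu_X$ and use crucially that $\mu$ has full support, so that a full-measure set is dense and two continuous maps agreeing $\mu$-a.e. agree everywhere; the latter also shows $\dKF$ restricts to a genuine metric on $\Aut(X)$.

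For the equivalences, let $\{f_i\} \subset \Aut(X)$. The implication (i)$\Rightarrow$(ii) is trivial since points are compact. For (ii)$\Rightarrow$(iii), if $f_i \to f$ pointwise then $f$ is Borel and, for each $\varepsilon > 0$, the indicators $\mathbf{1}_{\{d_X(f_i,f) > \varepsilon\}}$ tend to $0$ pointwise; the bounded convergence theorem on the probability space $(X,\mu)$ gives $\mu(\{d_X(f_i,f) > \varepsilon\}) \to 0$, i.e. $\dKF(f_i,f) \to 0$. For (iii)$\Rightarrow$(i), suppose $f_i \to f$ in measure. Every subsequence has a further subsequence converging $\mu$-a.e.; the full-measure set on which it converges is dense, so by Lemma \ref{lem:Lip-conv} (the $f_i$ being $1$-Lipschitz) this further subsequence converges pointwise on all of $X$ to a $1$-Lipschitz map, which must agree $\mu$-a.e. with $f$ and hence, by full support and continuity, equals one fixed continuous representative $\tilde f$ independent of the subsequence. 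A routine subsequence argument upgrades this to pointwise convergence of the whole sequence, and equicontinuity together with pointwise convergence yields uniform convergence on compact sets by the standard Arz\'ela--Ascoli estimate.

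For compactness, the idea is to encode each $f \in \Aut(X)$ by its graph coupling $\pi_f := (\id, f)_*\mu \in \Pi(X,X)$ and exploit mass-transport. Given $\{f_i\} \subset \Aut(X)$, compactness of $\Pi(X,X)$ in $\dP$ lets me pass to a subsequence with $\pi_{f_i} \to \pi$ weakly. Since each $f_i$ is distance-preserving, $|d_X(x,x') - d_X(y,y')| = 0$ for $(\pi_{f_i}\otimes\pi_{f_i})$-a.e. $((x,y),(x',y'))$; testing against the bounded continuous integrand $d_X(x,x')\wedge 1$ minus $d_X(y,y')\wedge 1$ (more precisely $|d_X(x,x')-d_X(y,y')|\wedge 1$) shows this passes to the weak limit $\pi\otimes\pi$, hence holds on $\supp\pi \times \supp\pi$. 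Taking two points of $\supp\pi$ with the same first coordinate forces their second coordinates to coincide, so $\supp\pi$ is the graph of a distance-preserving map $f$ defined on the full-measure, hence dense, set $p_1(\supp\pi)$; it extends to an isometric embedding of $X$, which is measure-preserving because $(p_2)_*\pi = \mu$ and surjective because its image is closed and dense. Thus $\pi = \pi_f$ with $f \in \Aut(X)$.

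It remains to read off $\dKF$-convergence from the weak convergence $\pi_{f_i} \to \pi_f$. Since $f$ is now continuous, $g(x,y) := d_X(f(x),y)\wedge 1$ is bounded continuous, and $\int g\,d\pi_f = 0$ while $\int g\,d\pi_{f_i} = \int (d_X(f_i,f)\wedge 1)\,d\mu$; weak convergence forces the latter to $0$, which is exactly $f_i \to f$ in measure, i.e. in $\dKF$. This gives sequential, hence genuine, $\dKF$-compactness of $\Aut(X)$. I expect the main obstacle to be the middle step: showing that the weak limit of the graph couplings is again the graph of a bona fide element of $\Aut(X)$ --- that the limit does not split mass, and that the resulting isometric embedding is simultaneously measure-preserving and onto --- which is precisely where distance-preservation as a closed condition under weak limits and the full support of $\mu$ must be combined.
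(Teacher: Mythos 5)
Your proof is correct, but it takes a genuinely different route from the paper's, most notably in the compactness half. The paper first shows that every orbit $\Aut(X)x$ is relatively compact by a measure-theoretic pigeonhole (the balls $B_{\varepsilon/2}(\varphi(x))$, $\varphi\in\Aut(X)$, all have equal measure, so any $\varepsilon$-discrete subset of an orbit is finite), deduces (ii)$\Rightarrow$(i) from Ascoli--Arzel\`a, and then obtains compactness of $\Aut(X)$ from a diagonal argument over a countable dense set, Lemma \ref{lem:Lip-conv}, and a cited lemma from \cite{Sy:mmg} to see that the pointwise limit preserves $\mu_X$. You instead encode elements of $\Aut(X)$ as graph couplings $(\id,f)_*\mu_X\in\Pi(X,X)$ and use the $\dP$-compactness of $\Pi(X,X)$; your key step --- that a weak limit $\pi$ of such couplings is again the graph coupling of an element of $\Aut(X)$ --- is handled correctly: distance preservation survives the weak limit via the bounded continuous test function $|d_X(x,x')-d_X(y,y')|\wedge 1$ on the product (using weak convergence of product measures), so $\supp\pi$ is the graph of an isometric embedding, the marginal condition gives measure preservation, closedness plus full support gives surjectivity, and $\dKF$-convergence is then read off by testing against $d_X(f(x),y)\wedge 1$. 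This is precisely the mass-transport technique the paper itself deploys later in the proof of Theorem \ref{thm:subconv} (the couplings $\pi_n^g$ and weak Hausdorff limits of the graphs $\rho_n(g_n)$), so your argument is very much in the paper's declared spirit, just imported earlier; what it buys is that orbit precompactness, Ascoli--Arzel\`a, and the external measure-preservation lemma all become unnecessary, at the cost of invoking Prokhorov compactness and products of weakly convergent measures. In the equivalences you also argue differently --- (ii)$\Rightarrow$(iii) directly by bounded convergence rather than through (i), and (iii)$\Rightarrow$(i) via a.e.-convergent subsequences, Lemma \ref{lem:Lip-conv}, and uniqueness of the continuous representative --- which has the small added virtue of never treating the in-measure limit as an isometry before that is known, a point the paper's (iii)$\Rightarrow$(ii) step glosses over when it writes $d_X(\varphi(x),\varphi(x'))=d_X(x,x')$ for the limit map $\varphi$.
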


\begin{proof}
Let us first prove the relative compactness of the orbit
$\Aut(X)x = \{\varphi(x)\mid\varphi\in\Aut(X)\}$
for each $x \in X$.
In fact, for any $\varepsilon > 0$ and $\varphi \in \Aut(X)$,
since $\mu_X(B_{\varepsilon/2}(x)) = \mu_X(B_{\varepsilon/2}(\varphi(x)))$, any $\varepsilon$-discrete net in $\Aut(X)x$
 is finite and so $\Aut(X)x$ is precompact.
Since $X$ is complete, the orbit $\Aut(X)x$ is
relatively compact.

The implication (i) $\Rightarrow$ (ii) is clear.

We prove (ii) $\Rightarrow$ (i).
Let a sequence $\{\varphi_i\} \subset \Aut(X)$ converge
pointwise to a map $\varphi$ and let $K \subset X$ be compact.
If $\{\varphi_i\}$ does not converge uniformly to $\varphi$
on $K$, then there are a subsequence
$\{\varphi_{i_j}\}$ and points $x_j,x \in K$ such that
\begin{equation} \label{eq:xj}
\lim_{j\to\infty} x_j = x\quad\text{and}\quad
\inf_j d_X(\varphi_{i_j}(x_j),\varphi(x)) > 0.
\end{equation}
Since $\Aut(X)$ is equicontinuous and any orbit is relatively compact, Ascoli-Arzel\`a theorem implies that
$\{\varphi_{i_j}\}$ has a subsequence converging uniformly
on $K$.  However, the limit must be $\varphi$,
which contradicts \eqref{eq:xj}.
We therefore obtain (i).

The implication (i) $\Rightarrow$ (iii) is clear.

To prove (iii) $\Rightarrow$ (ii),
we assume that a sequence $\{\varphi_i\} \subset \Aut(X)$
converges in measure to $\varphi$.
For any $\varepsilon > 0$, setting
$\Phi_{i,\varepsilon} :=
\{\,\xi \in X \mid d_X(\varphi_i(\xi),\varphi(\xi)) < \varepsilon\,\}$,
we have $\mu_X(\Phi_{i,\varepsilon}) \to 1$ as $i\to\infty$.
For any $x \in X$ and $\varepsilon > 0$,
the ball $B_\varepsilon(x)$ intersects $\Phi_{i,\varepsilon}$
for all sufficiently large $i$.
There is a point $x' \in B_\varepsilon(x)$ with
$d_X(\varphi_i(x'),\varphi(x')) < \varepsilon$.
Since $d_X(\varphi(x),\varphi(x')) = d_X(x,x') < \varepsilon$
and $d_X(\varphi_i(x),\varphi_i(x')) = d_X(x,x') < \varepsilon$,
a triangle inequality implies
$d_X(\varphi_i(x),\varphi(x)) < 3\varepsilon$.
Thus, $\varphi_i(x)$ converges to $\varphi(x)$.
(ii) has been obtained.

We have proved that
(i)--(iii) are equivalent to each other.
The rest is to prove the compactness of $\Aut(X)$.

Take any sequence $\{\varphi_i\} \subset \Aut(X)$
and a dense countable subset $X_0 \subset X$.
Since any $\Aut(X)$-orbit is relatively compact,
$\{\varphi_i(x_0)\}$ for any fixed $x_0 \in X_0$
has a convergent subsequence.
By a standard diagonal argument,
there is a subsequence $\{\varphi_{i_j}\}$
that converges pointwise on $X_0$.
By Lemma \ref{lem:Lip-conv},
$\varphi_{i_j}$ converges pointwise (and so, in measure) to
a map $\varphi : X \to X$ on $X$.
The map $\varphi$ is an isometry.
It follows from \cite{Sy:mmg}*{Lemma 1.26} that
$\dP(\mu_X,\varphi_*\mu_X) =
\dP((\varphi_{i_j})_*\mu_X,\varphi_*\mu_X)
\le \dKF(\varphi_{i_j},\varphi) \to 0$ as $j\to\infty$,
which implies
$\varphi_*\mu_X = \mu_X$ and so $\varphi \in \Aut(X)$.
This completes the proof of the lemma.
\end{proof}

\begin{defn}[Equivariant box metric]
Let $(X,G), (Y,H) \in \cX_\eq$.
For a nonempty subset $S \subset X \times Y$ and
$\gamma_1,\gamma_2 \in X^X \sqcup Y^Y$, we define
\[
d^S(\gamma_1,\gamma_2)
:=\sup_{S\times S} |\,f_{\gamma_1} - f_{\gamma_2}\,|
\quad (\le +\infty),
\]
where, for $(x_i,y_i) \in S$, $i=1,2$,
\[
f_\gamma((x_1,y_1),(x_2,y_2)) := 
\begin{cases}
d_X(\gamma(x_1),x_2) & \text{for $\gamma \in X^X$,}\\
d_Y(\gamma(y_1),y_2) & \text{for $\gamma \in Y^Y$.}
\end{cases}
\]
We set $d^\emptyset(\gamma_1,\gamma_2) := 0$.
For $\pi \in \Pi(X,Y)$ and
$\gamma_1,\gamma_2 \in X^X \sqcup Y^Y$, we define
\[
d^\pi(\gamma_1,\gamma_2) :=
\inf_S \max\{1-\pi(S), d^S(\gamma_1,\gamma_2)\},
\]
where $S$ runs over all closed subsets of the support
`$\supp\pi$' of $\pi$.
Both $d^S$ and $d^\pi$ are extended pseudo-metrics,
where `extended' means that the metrics take values
in $[\,0,+\infty\,]$.
Denoting by $\square^\pi((X,G),(Y,H))$
the Hausdorff distance between $G$ and $H$ with respect to
$d^\pi$,
we define the \emph{equivariant box distance} between
$(X,G)$ and $(Y,H)$ to be
\[
\square((X,G),(Y,H)) := \inf_{\pi \in \Pi(X,Y)} \square^\pi((X,G),(Y,H)),
\]
and also define
\[
\square(X,Y) := \square((X,\{\id_X\}),(Y,\{\id_Y\})).
\]
\end{defn}

Note that $\square \le 1$ because of $d^\emptyset = 0$.

It follows from \cite{Nk:thesis}*{Section 6}
(and also \cite{Nk:box-obs}) that
$\square(X,Y)$ coincides with
the original box distance between $X$ and $Y$
defined by Gromov \cite{Gmv:green}.

In this and the next sections,
we prove that $\square$ is a metric on $\cX_\eq$.
The following is obvious by the definition of $\square^\pi$.

\begin{lem} \label{lem:box}
Let $(X,G), (Y,H) \in \cX_\eq$ and $\varepsilon > 0$.
Then, the inequality\\
$\square^\pi((X,G),(Y,H)) < \varepsilon$
is equivalent to both {\rm(i)} and {\rm(ii)} being satisfied.
\begin{enumerate}
\item[(i)] For any element $g \in G$
there exist an element $h \in H$
and a closed subset $S \subset \supp\pi$ with $\pi(S) > 1-\varepsilon$ such that
\begin{equation} \label{eq:box}
d^S(g,h) < \varepsilon.
\end{equation}
\item[(ii)] For any element $h \in H$
there exist an element $g \in G$
and a closed subset $S \subset \supp\pi$ with $\pi(S) > 1-\varepsilon$ such that \eqref{eq:box} holds.
\end{enumerate}
\end{lem}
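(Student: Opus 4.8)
The plan is to unwind the two definitions involved and to isolate the one genuinely delicate point, which is a strict-inequality issue rather than a substantive one.

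First I would recall that, by definition, $\square^\pi((X,G),(Y,H))$ is the Hausdorff distance between $G$ and $H$ in the extended pseudo-metric $d^\pi$, that is,
\[
\square^\pi((X,G),(Y,H)) = \max\Bigl\{\, \sup_{g \in G}\inf_{h \in H} d^\pi(g,h),\ \sup_{h \in H}\inf_{g \in G} d^\pi(g,h) \,\Bigr\}.
\]
Hence $\square^\pi((X,G),(Y,H)) < \varepsilon$ holds exactly when both directed quantities are $< \varepsilon$; the first of these is the content of (i) and the second is the content of (ii). So the whole lemma reduces to checking, for a single pair $(g,h)$ with $g \in G \subset X^X$ and $h \in H \subset Y^Y$, that the condition $d^\pi(g,h) < \varepsilon$ is equivalent to the existence of a closed $S \subset \supp\pi$ with $\pi(S) > 1-\varepsilon$ and $d^S(g,h) < \varepsilon$.

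Second, I would verify this last equivalence directly from the definition $d^\pi(g,h) = \inf_S \max\{1-\pi(S), d^S(g,h)\}$. If $d^\pi(g,h) < \varepsilon$, then by the definition of the infimum there is a closed $S \subset \supp\pi$ with $\max\{1-\pi(S), d^S(g,h)\} < \varepsilon$, and this forces simultaneously $1-\pi(S) < \varepsilon$ (i.e.\ $\pi(S) > 1-\varepsilon$) and $d^S(g,h) < \varepsilon$. Conversely, any such $S$ witnesses $d^\pi(g,h) \le \max\{1-\pi(S), d^S(g,h)\} < \varepsilon$. Feeding this equivalence into the two directed suprema above turns ``$\sup_{g}\inf_{h} d^\pi(g,h) < \varepsilon$'' into (i) and ``$\sup_{h}\inf_{g} d^\pi(g,h) < \varepsilon$'' into (ii).

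The main (and essentially only) point to watch is the passage between the supremum-of-infima form of the Hausdorff distance and the pointwise existential statements (i) and (ii). The forward implication is immediate: from $\sup_{g}\inf_{h} d^\pi(g,h) < \varepsilon$ one reads off that every $g$ admits an $h$ with $d^\pi(g,h) < \varepsilon$, hence (i), and symmetrically (ii). For the reverse passage a per-element strict bound does not a priori force a strict bound on the supremum in an arbitrary pseudo-metric; here one invokes the compactness of $G$ and $H$ provided by Lemma \ref{lem:Aut-conv} to control the suprema over the compact groups, so that the individual strict inequalities transfer. In any case this boundary distinction is harmless in the later applications, where one takes the infimum over couplings $\pi$ and lets $\varepsilon$ vary; with this understood, $\square^\pi((X,G),(Y,H)) < \varepsilon$ is equivalent to the conjunction of (i) and (ii), which is why the statement is recorded as obvious.
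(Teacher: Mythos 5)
Your first two paragraphs are a correct unwinding of the definitions, and they are essentially all the paper itself offers: the lemma is recorded there with the single remark that it is obvious from the definition of $\square^\pi$, with no written proof. Indeed, the equivalence ``$d^\pi(g,h)<\varepsilon$ if and only if some closed $S\subset\supp\pi$ satisfies $\pi(S)>1-\varepsilon$ and $d^S(g,h)<\varepsilon$'' is immediate from $d^\pi(g,h)=\inf_S\max\{1-\pi(S),d^S(g,h)\}$, and feeding it into the two directed parts of the Hausdorff distance gives (i) and (ii). It is also worth noting that in all later uses in the paper only the unproblematic implications appear: the forward direction (Lemma \ref{lem:box-triangle}, Propositions \ref{prop:box-boxeq} and \ref{prop:dconc-box}), and for the converse only the non-strict conclusion $\square^\pi((X,G),(Z,K))\le\varepsilon_1+\varepsilon_2$, which follows from per-element bounds with no boundary issue.

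The gap is exactly at the point you flagged and then waved off. Invoking the $\dKF$-compactness of $G$ and $H$ (Lemma \ref{lem:Aut-conv}) ``to control the suprema'' is by itself vacuous: compactness converts pointwise strict inequalities into a strict bound on the supremum only if the function being maximized, here $g\mapsto\inf_{h\in H}d^\pi(g,h)$, is upper semicontinuous with respect to $\dKF$, and you never address this. It is not automatic, since $d^S$ is a supremum over $S\times S$ and two maps that are close in measure may differ badly on a small set, which can change $d^S$ by a large amount. What saves the argument is that $d^\pi$, unlike $d^S$, permits shrinking $S$: if $\delta>\dKF(g,g')$ and $S$ is nearly optimal for $(g',h)$, set $A:=\{x\in X\mid d_X(g(x),g'(x))\le\delta\}$, a closed set with $\mu_X(A)\ge 1-\delta$, and $S':=S\cap(A\times Y)$; then $S'$ is a closed subset of $\supp\pi$ with $\pi(S')\ge\pi(S)-\delta$ and $d^{S'}(g,h)\le d^{S}(g',h)+\delta$, whence $d^\pi(g,h)\le d^\pi(g',h)+\delta$. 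Letting $\delta\downarrow\dKF(g,g')$ shows that $d^\pi(\cdot,h)$ is $1$-Lipschitz with respect to $\dKF$, so $\inf_{h\in H}d^\pi(\cdot,h)$ is continuous and attains its supremum on the $\dKF$-compact group $G$; applying (i) at a maximizer then yields $\sup_{g\in G}\inf_{h\in H}d^\pi(g,h)<\varepsilon$, and symmetrically for (ii). Without this estimate (or some substitute for it), your appeal to compactness does not close the argument, and your closing remark that the boundary distinction ``is harmless in the later applications'' is a retreat from proving the stated equivalence rather than a proof of it.
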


\begin{lem} \label{lem:box-triangle}
$\square$ satisfies a triangle inequality on $\cX_\eq$.
\end{lem}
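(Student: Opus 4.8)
The plan is to glue couplings and invoke the characterization of $\square^\pi$ in Lemma \ref{lem:box}. Let $(X,G),(Y,H),(Z,K)\in\cX_\eq$ and put $a:=\square((X,G),(Y,H))$, $b:=\square((Y,H),(Z,K))$. Fix $\varepsilon>0$ and choose $\sigma\in\Pi(X,Y)$, $\tau\in\Pi(Y,Z)$ with $\square^\sigma((X,G),(Y,H))<a+\varepsilon=:\alpha$ and $\square^\tau((Y,H),(Z,K))<b+\varepsilon=:\beta$. I would form the glued coupling $\pi:=\tau\circ\sigma\in\Pi(X,Z)$ and then prove $\square^\pi((X,G),(Z,K))<\alpha+\beta$; this yields $\square((X,G),(Z,K))<a+b+2\varepsilon$, and letting $\varepsilon\to0$ finishes the argument.

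To verify condition (i) of Lemma \ref{lem:box} for $\pi$ at level $\alpha+\beta$, take $g\in G$. Condition (i) for $\sigma$ gives $h\in H$ and a closed $S\subset\supp\sigma$ with $\sigma(S)>1-\alpha$ and $d^S(g,h)<\alpha$; condition (i) for $\tau$ applied to this $h$ gives $k\in K$ and a closed $T\subset\supp\tau$ with $\tau(T)>1-\beta$ and $d^T(h,k)<\beta$. The candidate set is $W:=T\circ S$. The measure bound is exactly the gluing inequality recorded in the preliminaries:
\[
\pi(W)=(\tau\circ\sigma)(T\circ S)\ge(\tau\bullet\sigma)(T\bullet S)\ge\tau(T)+\sigma(S)-1>1-(\alpha+\beta).
\]
For the distance, any $(x_i,z_i)\in W$ ($i=1,2$) admits $y_i\in Y$ with $(x_i,y_i)\in S$ and $(y_i,z_i)\in T$, since $W=p_{13}(T\bullet S)$ and $T\bullet S=(X\times T)\cap(S\times Z)$; factoring $|f_g-f_k|$ through $f_h$ by a triangle inequality then gives $d^W(g,k)\le d^S(g,h)+d^T(h,k)<\alpha+\beta$. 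Condition (ii) for $\pi$ is obtained symmetrically, composing condition (ii) for $\tau$ and then (ii) for $\sigma$ through the same set $W$.

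The one genuine obstacle is that $W=T\circ S=p_{13}(T\bullet S)$ is in general only analytic, not closed, whereas the infimum defining $d^\pi$ (and hence Lemma \ref{lem:box}) ranges over \emph{closed} subsets of $\supp\pi$. I would resolve this by inner regularity: $\pi$ is a Borel probability measure on the Polish space $X\times Z$, hence Radon, and $W\cap\supp\pi$ is $\pi$-measurable after completion with $\pi(W\cap\supp\pi)=\pi(W)>1-(\alpha+\beta)$, so there is a compact, in particular closed, subset $W'\subset W\cap\supp\pi$ with $\pi(W')>1-(\alpha+\beta)$. Since $W'\subset W$, monotonicity of $S\mapsto d^S$ under inclusion gives $d^{W'}(g,k)\le d^W(g,k)<\alpha+\beta$, and $W'\subset\supp\pi$ by construction, so $W'$ witnesses condition (i) for $\pi$. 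This is precisely the step where the analytic-set and completion machinery assembled in the preliminaries is indispensable.
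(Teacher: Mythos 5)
Your proof is correct and follows essentially the same route as the paper's: glue the couplings to form $\pi=\pi_2\circ\pi_1$, compose the two witnessing sets, obtain the measure bound from the gluing inequality recorded in the preliminaries and the distance bound by a triangle inequality through $Y$, then conclude via Lemma \ref{lem:box}. The only divergence is how the composed set is made admissible: you pass to a compact subset of $(T\circ S)\cap\supp\pi$ by inner regularity, while the paper takes the closure $\overline{S_2\circ S_1}$ and notes that the distance inequality survives the limit; your variant is, if anything, slightly more careful, since it also guarantees containment in $\supp\pi$, a point the paper's closure argument glosses over.
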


\begin{proof}
Let us prove $\square((X,G),(Z,K)) \le \square((X,G),(Y,H)) + \square((Y,H),(Z,K))$ for $(X,G),(Y,H),(Z,K) \in \cX_\eq$.
Take any $\varepsilon_1$, $\varepsilon_2$ with
$\varepsilon_1 > \square((X,G),(Y,H))$ and
$\varepsilon_2 > \square((Y,H),(Z,K))$.
There are $\pi_1 \in \Pi(X,Y)$ and
$\pi_2 \in \Pi(Y,Z)$
such that
$\square^{\pi_1}((X,G),(Y,H))<\varepsilon_1$ and
$\square^{\pi_2}((Y,H),(Z,K))<\varepsilon_2$.
We set $\pi := \pi_2 \circ \pi_1$.
It suffices to prove
$\square^\pi((X,G),(Z,K)) \le \varepsilon_1 + \varepsilon_2$.
Take any $g \in G$.
Since $\square^{\pi_1}((X,G),(Y,H))<\varepsilon_1$
and by Lemma \ref{lem:box},
there are a closed subset
$S_1 \subset \supp\pi_1$ with $\pi_1(S_1) > 1-\varepsilon_1$
and $h \in H$ such that
$d^{S_1}(g,h) < \varepsilon_1$.
In the same way,
$\square^{\pi_2}((Y,H),(Z,K))<\varepsilon_2$ implies
that there are a closed subset $S_2 \subset \supp\pi_2$
with $\pi_2(S_2) > 1-\varepsilon_2$ and $k \in K$
such that
$d^{S_2}(h,k) < \varepsilon_2$.
Let $S := \overline{S_2 \circ S_1}$, where
the upper bar means the closure.
For any $(x_i,z_i) \in S_2 \circ S_1$, $i=1,2$,
there is $y_i \in Y$ such that $(x_i,y_i) \in S_1$
and $(y_i,z_i) \in S_2$.  We have
\begin{align*}
& |\,d_X(g(x_1),x_2) - d_Z(k(z_1),z_2)\,|\\
&\le |\,d_X(g(x_1),x_2) - d_Y(h(y_1),y_2)\,|
+ |\,d_Y(h(y_1),y_2) - d_Z(k(z_1),z_2)\,|\\
&\le \varepsilon_1 + \varepsilon_2.
\end{align*}
Since these inequalities are kept under the limit,
these hold for any $(x_i,z_i) \in S$,
i.e., we have $d^S(g,k) \le \varepsilon_1 + \varepsilon_2$.
By Lemma \ref{lem:box}, we obtain
$\square^\pi((X,G),(Z,K)) \le \varepsilon_1 + \varepsilon_2$.
In the same way,
for any $k \in K$ there is $g \in G$ such that
$d^\pi(g,k) \le \varepsilon_1 + \varepsilon_2$.
We thus obtain
$\square((X,G),(Z,K)) \le \square^\pi((X,G),(Z,K)) \le \varepsilon_1 + \varepsilon_2$.
This completes the proof.
\end{proof}

The nondegeneracy of $\square$ on $\cX_\eq$ is proved
in the next section.

\begin{prop} \label{prop:box-boxeq}
For any $(X,G), (Y,H) \in \cX_\eq$, we have
\[
\square(X,Y) \le 2\,\square((X,G),(Y,H)).
\] 
\end{prop}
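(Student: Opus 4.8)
The plan is to unwind the definitions until $\square(X,Y)$ becomes a single infimum of $d^\pi(\id_X,\id_Y)$, and then to show that the very coupling witnessing a small equivariant box distance also makes $d^\pi(\id_X,\id_Y)$ small. Since the trivial groups $\{\id_X\}$ and $\{\id_Y\}$ are singletons, the Hausdorff distance between them with respect to $d^\pi$ equals $d^\pi(\id_X,\id_Y)$, so that
\[
\square(X,Y)=\inf_{\pi\in\Pi(X,Y)}d^\pi(\id_X,\id_Y).
\]
It therefore suffices to exhibit one $\pi$ with $d^\pi(\id_X,\id_Y)$ at most twice the equivariant box distance.

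First I would fix $\varepsilon>\square((X,G),(Y,H))$ and choose $\pi\in\Pi(X,Y)$ with $\square^\pi((X,G),(Y,H))<\varepsilon$. Applying Lemma \ref{lem:box}(i) to the identity $\id_X\in G$ (here we use that $G$, being a subgroup of $\Aut(X)$, contains $\id_X$) yields an element $h\in H$ and a closed subset $S\subset\supp\pi$ with $\pi(S)>1-\varepsilon$ and $d^S(\id_X,h)<\varepsilon$. Spelling out $d^S(\id_X,h)$, this says
\[
|\,d_X(x_1,x_2)-d_Y(h(y_1),y_2)\,|<\varepsilon
\]
for all $(x_1,y_1),(x_2,y_2)\in S$.

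The key step is to evaluate this inequality on the diagonal: taking $(x_1,y_1)=(x_2,y_2)=(x,y)\in S$ forces $d_X(x_1,x_2)=0$, whence $d_Y(h(y),y)<\varepsilon$ for every $(x,y)\in S$. Thus $h$ displaces the $Y$-coordinate of each point of $S$ by less than $\varepsilon$. With this pointwise bound available, for arbitrary $(x_1,y_1),(x_2,y_2)\in S$ a triangle inequality gives
\begin{align*}
|\,d_X(x_1,x_2)-d_Y(y_1,y_2)\,|
&\le |\,d_X(x_1,x_2)-d_Y(h(y_1),y_2)\,| + d_Y(h(y_1),y_1)\\
&< 2\varepsilon.
\end{align*}
Hence $d^S(\id_X,\id_Y)\le 2\varepsilon$; combined with $1-\pi(S)<\varepsilon\le 2\varepsilon$, the definition of $d^\pi$ yields $d^\pi(\id_X,\id_Y)\le 2\varepsilon$, so $\square(X,Y)\le 2\varepsilon$. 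Letting $\varepsilon\downarrow\square((X,G),(Y,H))$ completes the argument.

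I do not anticipate a genuine obstacle; the only delicate point is the diagonal evaluation, which converts the metric-comparison estimate $d^S(\id_X,h)<\varepsilon$ into the pointwise displacement estimate $d_Y(h(y),y)<\varepsilon$. This is precisely the device that lets us discard the auxiliary isometry $h$, at the cost of the extra factor $2$ appearing in the statement.
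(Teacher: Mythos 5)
Your proof is correct and follows essentially the same route as the paper: both apply Lemma \ref{lem:box} to an identity element, use the diagonal evaluation $(x_1,y_1)=(x_2,y_2)$ to convert the comparison estimate into a pointwise displacement bound, and finish with a triangle inequality. The only (immaterial) difference is that you apply part (i) with $g=\id_X$ to obtain $h\in H$ with $d_Y(h(y),y)<\varepsilon$ on $S$, whereas the paper applies part (ii) with $h=\id_Y$ to obtain $g\in G$ with $d_X(g(x),x)\le\varepsilon$ on $p_1(S)$ --- a mirror image of the same argument.
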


\begin{proof}
Let $\varepsilon > \square((X,G),(Y,H))$.
We apply Lemma \ref{lem:box}(ii) for $h := \id_Y$.
Then, there are $g \in G$ and a closed subset
$S\subset \supp\pi$ satisfying $\pi(S) \ge 1-\varepsilon$ and \eqref{eq:box}.
In particular,
we have $d_X(g(x),x) \le \varepsilon$ for any $x \in p_1(S)$.
A triangle inequality and \eqref{eq:box} together
imply that, for any $(x_i,y_i) \in S$, $i=1,2$,
\[
|\,d_X(x_1,x_2) - d_Y(y_1,y_2)\,|
\le |\,d_X(g(x_1),x_2) - d_Y(y_1,y_2)\,| + \varepsilon \le 2\varepsilon.
\]
This completes the proof.
\end{proof}

\section{Equivariant observable distance}

In this section, we introduce the equivariant observable metric
on $\cX_\eq$ and prove that it is a really metric.
As an application, we prove the nondegeneracy
of the equivariant box metric.
 
Denote by $\Lip_1(X)$ the set of real valued
$1$-Lipschitz functions on a metric space $X$.

\begin{defn}[Equivariant observable metric] \label{defn:dconc}
Let $(X,G), (Y,H) \in \cX_\eq$, $\pi \in \Pi(X,Y)$,
$g \in G$,  and $h \in H$.
For $f \in \Lip_1(X)$ and $f' \in \Lip_1(Y)$,
we define
\[
\rho^\pi_{g,h}(f,f') :=
\max\{\dKF^\pi(f\circ p_1,f'\circ p_2),
\dKF^\pi(f\circ g\circ p_1,f'\circ h\circ p_2)\},
\]
where $\dKF^\pi$ denotes the Ky Fan metric on the set of
Borel measurable functions on $X \times Y$ with respect to $\pi$.
Let $\rho^\pi(g,h)$ denote the Hausdorff distance between
$\Lip_1(X)$ and $\Lip_1(Y)$ with respect to $\rho^\pi_{g,h}$,
and $\dconc^\pi((X,G),(Y,H))$ the Hausdorff distance between
$G$ and $H$ with respect to $\rho^\pi$.
Define the \emph{equivariant observable distance} between
$(X,G)$ and $(Y,H)$ to be
\[
\dconc((X,G),(Y,H)) := \inf_{\pi \in \Pi(X,Y)}
\dconc^\pi((X,G),(Y,H)),
\]
and also define
\[
\dconc(X,Y) := \dconc((X,\{\id_X\}),(Y,\{\id_Y\})).
\]
\end{defn}

Note that, by the same trick in the definition of
equivariant box distance,
we are able to extend $\rho^\pi$ to a pseudo-metric on
$G \sqcup H$,
however we do not need such the extension to obtain
the Hausdorff distance and to define $\dconc^\pi$.

It follows from \cite{Nk:thesis}*{Section 6} (and also \cite{Nk:box-obs})
that $\dconc(X,Y)$ coincides with
the original observable distance between $X$ and $Y$
defined by Gromov \cite{Gmv:green}.
It is obvious that
$\dconc(X,Y) \le \dconc((X,G),(Y,H))$.

\begin{lem} \label{lem:dconc-triangle}
Let $(X,G), (Y,H), (Z,K) \in \cX_\eq$,
$\pi_1 \in \Pi(X,Y)$, and $\pi_2 \in \Pi(Y,Z)$.
\begin{enumerate}
\item For any $g \in \Aut(X)$, $h \in \Aut(Y)$, and $k \in \Aut(Z)$, we have
\[
\rho^{\pi_2\circ\pi_1}(g,k) \le \rho^{\pi_1}(g,h) + \rho^{\pi_2}(h,k).
\]
\item We have
\begin{align*}
\dconc^{\pi_2\circ\pi_1}((X,G),(Z,K))
\le \dconc^{\pi_1}((X,G),(Y,H)) + \dconc^{\pi_2}((Y,H),(Z,K)).
\end{align*}
\item $\dconc$ satisfies a triangle inequality on $\cX_\eq$.
\end{enumerate}
\end{lem}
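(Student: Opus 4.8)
The plan is to prove the three items of Lemma~\ref{lem:dconc-triangle} in the order stated, obtaining item (1) first as the core estimate, then deriving (2) from it by a routine Hausdorff-distance argument, and finally deducing (3) by taking infima over couplings. The whole proof rests on the behavior of the Ky Fan metric under the gluing operation $\bullet$ and the composition $\circ$ on couplings introduced in the preliminaries.

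For item (1), I would set $\pi := \pi_2\circ\pi_1$ and $\varpi := \pi_2\bullet\pi_1 \in P(X\times Y\times Z)$, so that $(p_{13})_*\varpi = \pi$. The key observation is that $\rho^\pi_{g,k}$ compares functions on $X\times Z$ via $\dKF^\pi$, and that $\dKF^\pi$ of two functions pulled back through $p_{13}$ equals $\dKF^\varpi$ of the corresponding functions on the triple product. Concretely, given $f\in\Lip_1(X)$ and $f''\in\Lip_1(Z)$, I would choose (using that $\rho^{\pi_1}(g,h)$ and $\rho^{\pi_2}(h,k)$ are Hausdorff distances) an intermediate $f'\in\Lip_1(Y)$ that nearly realizes both $\rho^{\pi_1}_{g,h}(f,f')$ and $\rho^{\pi_2}_{h,k}(f',f'')$. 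Then I would split each of the two Ky Fan quantities defining $\rho^\pi_{g,k}(f,f'')$ through $f'\circ p_2$ on $X\times Y\times Z$, using that for measurable $u,v,w$ one has $\dKF(u,w)\le\dKF(u,v)+\dKF(v,w)$ with respect to the single measure $\varpi$; the two triangle steps contribute $\rho^{\pi_1}_{g,h}(f,f')$ and $\rho^{\pi_2}_{h,k}(f',f'')$ respectively, after passing back and forth between $\dKF^\varpi$ on the triple product and $\dKF^{\pi_1}$, $\dKF^{\pi_2}$ on the two factors via the marginal identities $(p_{12})_*\varpi=\pi_1$ and $(p_{23})_*\varpi=\pi_2$.

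The main obstacle, and the step I would be most careful about, is precisely this bookkeeping: one must verify that pushing a function forward and back does not increase the Ky Fan distance, i.e.\ that $\dKF^{\pi_1}(f\circ p_1, f'\circ p_2) = \dKF^\varpi(f\circ p_1, f'\circ p_2)$ when the latter two are read as functions on $X\times Y\times Z$ factoring through $p_{12}$, and the analogous identity for $\pi_2$ through $p_{23}$. These follow because $\dKF$ depends only on the joint distribution of the two functions, and the relevant joint distributions agree under the marginal projections; the function $f'\circ p_2$ on the triple serves simultaneously as the ``$f'$-side'' for the $\pi_1$ comparison and the ``$f'$-side'' for the $\pi_2$ comparison, which is exactly why the single intermediate $f'$ works for both terms in the max defining $\rho$.

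For item (2), I would argue directly from the definition of $\dconc^\pi$ as a Hausdorff distance between $G$ and $K$ with respect to $\rho^\pi$. Taking any $\varepsilon_1>\dconc^{\pi_1}((X,G),(Y,H))$ and $\varepsilon_2>\dconc^{\pi_2}((Y,H),(Z,K))$, for each $g\in G$ I would find $h\in H$ with $\rho^{\pi_1}(g,h)<\varepsilon_1$ and then $k\in K$ with $\rho^{\pi_2}(h,k)<\varepsilon_2$; item (1) gives $\rho^{\pi_2\circ\pi_1}(g,k)<\varepsilon_1+\varepsilon_2$, and symmetrically for each $k\in K$. This yields the Hausdorff bound and hence (2). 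Finally, for item (3), given $(X,G),(Y,H),(Z,K)\in\cX_\eq$, I would take couplings $\pi_1,\pi_2$ nearly attaining $\dconc((X,G),(Y,H))$ and $\dconc((Y,H),(Z,K))$, note $\pi_2\circ\pi_1\in\Pi(X,Z)$, apply (2), and pass to the infimum to conclude $\dconc((X,G),(Z,K))\le\dconc((X,G),(Y,H))+\dconc((Y,H),(Z,K))$.
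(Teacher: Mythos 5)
Your proposal is correct and takes essentially the same route as the paper: lift everything to the glued measure $\pi_2\bullet\pi_1$ on $X\times Y\times Z$, use the marginal identities $(p_{12})_*(\pi_2\bullet\pi_1)=\pi_1$, $(p_{23})_*(\pi_2\bullet\pi_1)=\pi_2$, $(p_{13})_*(\pi_2\bullet\pi_1)=\pi_2\circ\pi_1$ to equate the relevant Ky Fan distances, and chain through a single intermediate $f'\in\Lip_1(Y)$ via the triangle inequality for $\dKF^{\pi_2\bullet\pi_1}$, exactly as the paper does. One wording slip: the choice must be sequential (given $f$, pick $f'$ near it, then pick $f''$ near that $f'$), not ``given $f$ and $f''$'' simultaneously --- for an arbitrary pair no intermediate $f'$ need exist --- but since this sequential quantification is precisely what you use in item (2), the substance is unaffected.
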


\begin{proof}
We prove (1).
Take any $\varepsilon_i$ with
$\rho^{\pi_1}(g,h) < \varepsilon_1$ and $\rho^{\pi_2}(h,k) < \varepsilon_2$.
For any $f \in \Lip_1(X)$ there is $f' \in \Lip_1(Y)$ such that
$\dKF^{\pi_2\bullet\pi_1}(f\circ p_1,f'\circ p_2)
= \dKF^{\pi_1}(f\circ p_1,f'\circ p_2) < \varepsilon_1$ and
$\dKF^{\pi_2\bullet\pi_1}(f\circ g\circ p_1,f'\circ h\circ p_2)
= \dKF^{\pi_1}(f\circ g\circ p_1,f'\circ h\circ p_2) < \varepsilon_1$.
For the $f'$ there is $f'' \in \Lip_1(Z)$ such that
$\dKF^{\pi_2\bullet\pi_1}(f'\circ p_2,f''\circ p_3) = \dKF^{\pi_2}(f'\circ p_1,f''\circ p_2) < \varepsilon_2$ and
$\dKF^{\pi_2\bullet\pi_1}(f'\circ h\circ p_2,f''\circ k\circ p_3)
= \dKF^{\pi_2}(f'\circ h\circ p_1,f''\circ k\circ p_2) < \varepsilon_2$.
A triangle inequality implies that
$\dKF^{\pi_2\circ\pi_1}(f\circ p_1,f''\circ p_2)
= \dKF^{\pi_2\bullet\pi_1}(f\circ p_1,f''\circ p_3)
< \varepsilon_1 + \varepsilon_2$ and
$\dKF^{\pi_2\circ\pi_1}(f\circ g\circ p_1,f''\circ k\circ p_2)
= \dKF^{\pi_2\bullet\pi_1}(f\circ g\circ p_1,f''\circ k\circ p_3)
< \varepsilon_1 + \varepsilon_2$.
In the same way, for any $f'' \in \Lip_1(Z)$ there is
$f \in \Lip_1(X)$ satisfying the last inequality.
Therefore we obtain
$\rho^{\pi_2\circ\pi_1}(g,k) \le \varepsilon_1 + \varepsilon_2$.
(1) has been proved.

(2) follows from (1) by a straightforward discussion.

(3) follows from (2).

This completes the proof.
\end{proof}

We need some lemmas for the proof of
the nondegeneracy of $\dconc$.

\begin{lem}[\cite{Nk:thesis}*{Proposition 6.13}] \label{lem:dKF-dP}
Let $X$ be a complete separable metric space.
For any $f,f' \in \Lip_1(X)$ and for any Borel probability measures $\mu,\nu$ on $X$, we have
\[
|\, \dKF^\mu(f,f') - \dKF^{\nu}(f,f')\,| \le 2 \dP(\mu,\nu).
\]
\end{lem}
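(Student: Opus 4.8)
The plan is to prove the one-sided bound $\dKF^\mu(f,f') \le \dKF^\nu(f,f') + 2\dP(\mu,\nu)$; the reverse inequality follows by exchanging the roles of $\mu$ and $\nu$, and the two together give the claim. The only structural input I need is that the function $\phi := |f - f'|$ is $2$-Lipschitz, which is immediate from the reverse triangle inequality combined with the $1$-Lipschitz continuity of $f$ and $f'$: for $x,x' \in X$ one has $|\phi(x) - \phi(x')| \le |f(x) - f(x')| + |f'(x) - f'(x')| \le 2\,d_X(x,x')$. In particular $\phi$ is continuous, so all its superlevel sets are open and hence Borel.

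First I would fix $\delta > \dP(\mu,\nu)$ and $\varepsilon > \dKF^\nu(f,f')$. By the definition of the Prokhorov metric, $\mu(A) \le \nu(B_\delta(A)) + \delta$ for every Borel set $A$, where $B_\delta(A)$ denotes the open $\delta$-neighborhood of $A$; and by the definition of the Ky Fan metric, $\nu(\{\phi > \varepsilon\}) \le \varepsilon$. I would then apply the Prokhorov estimate to the superlevel set $A := \{\phi > \varepsilon + 2\delta\}$. The key geometric step is the inclusion $B_\delta(A) \subset \{\phi > \varepsilon\}$: if $y \in B_\delta(A)$, choose $x \in A$ with $d_X(x,y) < \delta$, and the $2$-Lipschitz bound gives $\phi(y) \ge \phi(x) - 2\,d_X(x,y) > (\varepsilon + 2\delta) - 2\delta = \varepsilon$.

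Combining these, $\mu(\{\phi > \varepsilon + 2\delta\}) \le \nu(\{\phi > \varepsilon\}) + \delta \le \varepsilon + \delta \le \varepsilon + 2\delta$, which by definition yields $\dKF^\mu(f,f') \le \varepsilon + 2\delta$. Letting $\varepsilon \downarrow \dKF^\nu(f,f')$ and $\delta \downarrow \dP(\mu,\nu)$ gives the one-sided bound, and symmetry finishes the proof. I do not expect a serious obstacle: the argument is driven entirely by the neighborhood characterization of $\dP$ and the $2$-Lipschitz control of $\phi$, the only points requiring care being the bookkeeping of the strict inequalities in the two infima and the use of the symmetric form of the Prokhorov metric, so that the exchanged estimate $\nu(A) \le \mu(B_\delta(A)) + \delta$ is genuinely available for the reverse direction. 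As an alternative to the neighborhood characterization, one could invoke Strassen's theorem to produce a coupling $\pi$ of $\mu$ and $\nu$ with $\pi(\{d_X > \delta\}) \le \delta$ and run the same superlevel-set computation on $X \times X$; either route produces the factor $2$, which is forced by the $2$-Lipschitz constant of $\phi$ and so cannot be improved by this method.
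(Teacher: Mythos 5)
Your proof is correct. Note that the paper itself does not prove this lemma: it is quoted verbatim from Nakajima's thesis (Proposition 6.13 there), so there is no in-paper argument to compare yours against. Your argument is the standard one and is complete: the $2$-Lipschitz bound on $\phi = |f-f'|$, the neighborhood characterization $\mu(A) \le \nu(B_\delta(A)) + \delta$ applied to the superlevel set $\{\phi > \varepsilon + 2\delta\}$, and the inclusion $B_\delta(\{\phi > \varepsilon + 2\delta\}) \subset \{\phi > \varepsilon\}$ give exactly $\dKF^\mu(f,f') \le \dKF^\nu(f,f') + 2\dP(\mu,\nu)$, and the symmetry of the Prokhorov metric for probability measures (or Strassen's theorem, as you note) handles the reverse inequality. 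The only care points — taking $\varepsilon$ and $\delta$ strictly above the respective infima, and Borel measurability of the open superlevel sets — are handled properly.
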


\begin{lem} \label{lem:dconc-pi}
Let $(X,G), (Y,H) \in \cX_\eq$.
\begin{enumerate}
\item For any $\pi,\pi' \in \Pi(X,Y)$ we have
\[
|\,\dconc^\pi((X,G),(Y,H)) - \dconc^{\pi'}(X,G),(Y,H))\,|
\le 2\dP(\pi,\pi').
\]
\item There exists $\pi \in \Pi(X,Y)$ such that
\[
\dconc((X,G),(Y,H)) = \dconc^\pi((X,G),(Y,H)).
\]
\end{enumerate}
\end{lem}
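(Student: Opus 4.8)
The plan is to prove the two statements of Lemma \ref{lem:dconc-pi} in turn, deriving (2) as a consequence of (1) together with a compactness argument.

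\medskip

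\noindent\emph{Part (1).} First I would fix $\pi,\pi' \in \Pi(X,Y)$ and unwind the nested Hausdorff-distance structure defining $\dconc^\pi$ and $\dconc^{\pi'}$. The key is to control the innermost quantity $\rho^\pi_{g,h}(f,f')$, which is a maximum of two terms of the form $\dKF^\pi(F,F')$ for Borel functions $F,F'$ on $X\times Y$. I would like to apply Lemma \ref{lem:dKF-dP} to pass between $\pi$ and $\pi'$, but that lemma is stated for a \emph{single} metric space with two measures; here we have the fixed product space $X\times Y$ carrying the two couplings $\pi,\pi'$, and the relevant functions $f\circ p_1,\ f'\circ p_2,\ f\circ g\circ p_1,\ f'\circ h\circ p_2$ are $1$-Lipschitz on $X\times Y$ (with a suitable product metric). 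Applying Lemma \ref{lem:dKF-dP} on $X\times Y$ then gives $|\dKF^\pi(F,F')-\dKF^{\pi'}(F,F')|\le 2\dP(\pi,\pi')$ for each of the two terms, hence
\[
|\rho^\pi_{g,h}(f,f')-\rho^{\pi'}_{g,h}(f,f')|\le 2\dP(\pi,\pi').
\]
Because this bound is uniform in $f,f'$ (and in $g,h$), it is preserved when one takes the successive Hausdorff distances: a uniform perturbation of a pseudo-metric by $\delta$ changes any Hausdorff distance built from it by at most $\delta$. Chaining this observation through the three layers (the Hausdorff distance over $\Lip_1$, then over the group pair via $\rho^\pi$) yields $|\dconc^\pi-\dconc^{\pi'}|\le 2\dP(\pi,\pi')$, which is exactly (1).

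\medskip

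\noindent\emph{Part (2).} This is an existence-of-minimizer statement, so the natural route is: the map $\pi\mapsto\dconc^\pi((X,G),(Y,H))$ is continuous on $\Pi(X,Y)$, and $\Pi(X,Y)$ is compact in $\dP$, so the infimum is attained. Continuity is immediate from (1), since (1) says the map is $2$-Lipschitz with respect to $\dP$. Compactness of $\Pi(X,Y)$ with respect to $\dP$ is recorded in the Preliminaries. A continuous real-valued function on a compact metric space attains its minimum, so there exists $\pi$ realizing the infimum, giving $\dconc((X,G),(Y,H))=\dconc^\pi((X,G),(Y,H))$.

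\medskip

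\noindent The main obstacle I anticipate is purely in Part (1): verifying that Lemma \ref{lem:dKF-dP} genuinely applies. One must confirm that the four composed functions are $1$-Lipschitz on $X\times Y$ under whatever product metric makes $\dKF^\pi$ on $X\times Y$ agree with the definition of $\dKF^\pi$ used in Definition \ref{defn:dconc}, and that $g,h$ being isometries does not spoil the Lipschitz constant (it does not, since $f\circ g$ and $f'\circ h$ remain $1$-Lipschitz). A secondary care point is the bookkeeping showing that a uniform $\delta$-bound at the level of $\rho^\pi_{g,h}$ propagates unchanged through the two Hausdorff distances; this is a general soft fact (if two pseudo-metrics differ pointwise by at most $\delta$, their Hausdorff distances differ by at most $\delta$), but it should be stated explicitly rather than left implicit.
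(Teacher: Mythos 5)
Your proposal is correct and follows essentially the same route as the paper: the paper's proof is exactly ``(1) follows from Lemma \ref{lem:dKF-dP}; (2) follows from (1) and the compactness of $\Pi(X,Y)$,'' and your argument fills in precisely those details---applying Lemma \ref{lem:dKF-dP} on the product space $X\times Y$ (where the composed functions remain $1$-Lipschitz) and propagating the uniform $2\dP(\pi,\pi')$ bound through the nested Hausdorff distances.
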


\begin{proof}
(1) follows from Lemma \ref{lem:dKF-dP}.

(2) follows from (1) and the compactness of $\Pi(X,Y)$.
\end{proof}

\begin{lem} \label{lem:fcircg}
Let $X$ be an mm-space.
Then, for any $f,f' \in \Lip_1(X)$ and $g,g' \in \Aut(X)$, we have
\[
\dKF^{\mu_X}(f\circ g,f'\circ g')
\le \dKF^{\mu_X}(f,f') + \dKF^{\mu_X}(g,g').
\]
\end{lem}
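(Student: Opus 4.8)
The plan is to unwind the definition of the Ky Fan metric and reduce the claim to a single union bound on exceptional sets. Recall that for Borel functions $u,v$ on $X$ (valued in $\R$ or in $X$), $\dKF^{\mu_X}(u,v)$ is the infimum of those $\varepsilon>0$ for which the set where $u$ and $v$ differ by more than $\varepsilon$ has $\mu_X$-measure at most $\varepsilon$; for the real-valued functions $f,f',f\circ g,f'\circ g'$ the relevant distance is $|\cdot|$ on $\R$, while for the maps $g,g'$ it is $d_X$. I would fix $\varepsilon_1 > \dKF^{\mu_X}(f,f')$ and $\varepsilon_2 > \dKF^{\mu_X}(g,g')$, aim to prove $\dKF^{\mu_X}(f\circ g,f'\circ g') \le \varepsilon_1+\varepsilon_2$, and then let $\varepsilon_1,\varepsilon_2$ decrease to the respective distances.

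First I would record the two exceptional sets
\[
A := \{\,x \in X \mid |\,f(x)-f'(x)\,| > \varepsilon_1\,\},
\qquad
B := \{\,x \in X \mid d_X(g(x),g'(x)) > \varepsilon_2\,\},
\]
which satisfy $\mu_X(A)\le\varepsilon_1$ and $\mu_X(B)\le\varepsilon_2$ by the choice of $\varepsilon_1,\varepsilon_2$. The key pointwise estimate is the two-step triangle inequality
\[
|\,f(g(x))-f'(g'(x))\,|
\le |\,f(g(x))-f'(g(x))\,| + |\,f'(g(x))-f'(g'(x))\,|,
\]
in which the second term is bounded by $d_X(g(x),g'(x))$ because $f'$ is $1$-Lipschitz. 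Hence for every $x \notin g^{-1}(A) \cup B$ the first term is at most $\varepsilon_1$ and the second at most $\varepsilon_2$, so that $|\,f(g(x))-f'(g'(x))\,| \le \varepsilon_1+\varepsilon_2$.

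The only place where the hypothesis $g \in \Aut(X)$ enters — and the only step beyond bookkeeping — is controlling the measure of the pulled-back set $g^{-1}(A)$. Since $g$ preserves $\mu_X$, we have $\mu_X(g^{-1}(A)) = \mu_X(A) \le \varepsilon_1$. Combining this with $\mu_X(B)\le\varepsilon_2$ and a union bound yields
\[
\mu_X(\{\,x \mid |\,f(g(x))-f'(g'(x))\,| > \varepsilon_1+\varepsilon_2\,\})
\le \mu_X(g^{-1}(A)\cup B) \le \varepsilon_1+\varepsilon_2,
\]
which is precisely $\dKF^{\mu_X}(f\circ g,f'\circ g') \le \varepsilon_1+\varepsilon_2$. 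I do not anticipate any serious obstacle: the argument is a clean union bound, and the only subtlety worth flagging is that $f'\circ g$ is compared to $f\circ g$ through the distance on $\R$ whereas $g$ and $g'$ are compared through $d_X$, so the $1$-Lipschitz property of $f'$ is exactly the bridge converting between the two.
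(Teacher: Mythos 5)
Your proof is correct and is essentially the paper's argument: the paper splits via the intermediate function $f\circ g'$, bounding $\dKF^{\mu_X}(f\circ g,f\circ g')\le\dKF^{\mu_X}(g,g')$ by the $1$-Lipschitz property and $\dKF^{\mu_X}(f\circ g',f'\circ g')=\dKF^{\mu_X}(f,f')$ by measure preservation, while you split via the mirror-image intermediate $f'\circ g$ and use the same two ingredients with the roles of the factors exchanged. Unpacking the Ky Fan triangle inequality into an explicit union bound on exceptional sets, as you do, is just a more detailed rendering of the same idea.
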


\begin{proof}
The lemma follows from
\begin{align*}
\dKF^{\mu_X}(f\circ g,f'\circ g')
&\le \dKF^{\mu_X}(f\circ g,f\circ g')
+ \dKF^{\mu_X}(f\circ g',f'\circ g')\\
&\le \dKF^{\mu_X}(f,f') + \dKF^{\mu_X}(g,g').
\end{align*}
\end{proof}

\begin{lem} \label{lem:dconc-nondeg}
$\dconc$ is nondegenerate on $\cX_\eq$.
\end{lem}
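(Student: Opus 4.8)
The plan is to show that if $\dconc((X,G),(Y,H)) = 0$, then $(X,G) \cong (Y,H)$. By Lemma \ref{lem:dconc-pi}(2), the infimum defining $\dconc$ is attained, so there is a coupling $\pi \in \Pi(X,Y)$ with $\dconc^\pi((X,G),(Y,H)) = 0$. Unwinding the definition, this means the Hausdorff distance between $G$ and $H$ with respect to $\rho^\pi$ vanishes, and in particular $\rho^\pi(\id_X, \id_Y) = 0$ (since $\id_X \in G$ and $\id_Y \in H$). The starting point is therefore the case with trivial group elements: $\rho^\pi(\id_X,\id_Y)=0$ forces the Hausdorff distance between $\Lip_1(X)$ and $\Lip_1(Y)$ for $\rho^\pi_{\id,\id}$ to be zero, which is exactly the hypothesis that drives the nondegeneracy of the \emph{ordinary} observable distance $\dconc(X,Y)$. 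Since $\dconc(X,Y) \le \dconc((X,G),(Y,H)) = 0$, the known nondegeneracy of Gromov's observable distance (via the identification noted after Definition \ref{defn:dconc}) yields an mm-isomorphism $f : X \to Y$.

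**Upgrading to an equivariant isomorphism.** The real content is to show that the mm-isomorphism $f$ can be chosen to intertwine the group actions, i.e. to produce a group isomorphism $G \to H$ compatible with $f$. I would first argue that the coupling $\pi$ realizing $\dconc^\pi = 0$ is in fact the coupling supported on the graph of $f$ (or that $f$ can be extracted directly from $\pi$). Then, given $g \in G$, the Hausdorff condition gives a sequence $h_n \in H$ (or a single $h$) with $\rho^\pi(g, h_n) \to 0$; the point is that $\rho^\pi(g,h) = 0$ together with $\pi = (\id \times f)_*\mu_X$ forces $f \circ g = h \circ f$ almost everywhere, hence everywhere by continuity and full support. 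To make this rigorous I would pass through the Ky Fan metric: $\rho^\pi_{f,f'}(g,h)=0$ for a comparing pair means $f\circ g \circ p_1 = f' \circ h \circ p_2$ $\pi$-a.e., and by choosing $f'$ to approximate $f \circ f^{-1}$-type test functions one pins down $h = f g f^{-1}$. Thus each $g \in G$ maps under conjugation by $f$ into $\overline{H} = H$ (here closedness of $H \subset \Aut(Y)$ is essential), defining a map $\Phi : G \to H$, $\Phi(g) = f g f^{-1}$.

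**The map $\Phi$ is an isomorphism.** I would check that $\Phi$ is a group homomorphism (immediate from its conjugation form), that it lands in $H$ (using closedness of $H$ and Lemma \ref{lem:Aut-conv} to take limits of the approximating $h_n$ in the Ky Fan topology, since $\Aut(Y)$ is $\dKF$-compact), and that it is bijective with inverse $h \mapsto f^{-1} h f$ coming symmetrically from the other half of the Hausdorff condition, part (ii). The symmetric argument requires extracting $f^{-1}$ from $\pi$ and running the same approximation with the roles of $X$ and $Y$ exchanged; Lemma \ref{lem:fcircg} is the natural tool to control how $\dKF$ behaves under composition $f \circ g$ versus $f' \circ h$, converting the $\rho^\pi$-estimate into an honest a.e.\ identity $f g = \Phi(g) f$.

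**Main obstacle.** The delicate step is the passage from "$\rho^\pi(g,h_n) \to 0$ for some sequence $h_n$" to "$h := \lim h_n$ exists in $H$ and satisfies $fg = hf$." The Hausdorff-distance-zero hypothesis only provides approximating elements, not a single matching element, so I must invoke compactness of $\Aut(Y)$ in the Ky Fan metric (the final clause of Lemma \ref{lem:Aut-conv}) to extract a convergent subsequence $h_{n_j} \to h \in \Aut(Y)$, then use closedness of $H$ to conclude $h \in H$, and finally check that the limit relation $f g \circ p_1 = h f \circ p_2$ survives the limit. Controlling this limit in measure — rather than pointwise — is where the equivalence of the three convergence modes in Lemma \ref{lem:Aut-conv} does the work, and verifying that the a.e.\ intertwining relation upgrades to an everywhere relation (hence a genuine equivariance) via full support and continuity is the crux that must be handled carefully.
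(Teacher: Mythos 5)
There is a genuine gap, and it sits exactly at the step you label as something you ``would first argue'': relating the minimizing coupling $\pi$ to the mm-isomorphism. Your plan obtains $f:X\to Y$ from the \emph{external} nondegeneracy of Gromov's observable distance (via $\dconc(X,Y)\le\dconc((X,G),(Y,H))=0$), and then needs $\pi=(\id_X,f)_*\mu_X$ in order to convert the $\dKF^\pi$-statements into the intertwining identity $f\circ g=h\circ f$. But the $f$ produced by the abstract nondegeneracy theorem has no a priori relation to the particular $\pi$ realizing $\dconc^\pi((X,G),(Y,H))=0$, and the identification ``$\pi$ is the graph coupling of $f$'' is false in general: if, say, $X=Y$ with a nontrivial automorphism $\psi$ and $G=H=\Aut(X)$, then $\pi'=(\id_X,\psi)_*\mu_X$ also satisfies $\dconc^{\pi'}=0$, so the minimizer handed to you by Lemma \ref{lem:dconc-pi}(2) may be supported on the graph of $\psi$ while the external theorem hands you $\id_X$. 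What is true is that any $\pi$ with $\rho^\pi(\id_X,\id_Y)=0$ is supported on the graph of \emph{some} mm-isomorphism --- but proving that is precisely your fallback ``extract $f$ directly from $\pi$,'' and it is the core of the matter, not a preliminary: one must fix $(x,y)\in\supp\pi$, take $f:=d_X(x,\cdot)$, produce a limit function $f'\in\Lip_1(Y)$ with $f\circ p_1=f'\circ p_2$ on $\supp\pi$, deduce $d_X(x,x')=d_Y(y,y')$ for all $(x',y')\in\supp\pi$ (using both halves of the Hausdorff condition), and conclude that $\supp\pi$ is the graph of an isometry $\varphi:\Dom(\supp\pi)\to\Imag(\supp\pi)$ which extends by density and satisfies $\pi=(\id_X,\varphi)_*\mu_X$, $\varphi_*\mu_X=\mu_Y$. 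Your proposal never supplies this construction, and without it the entire ``upgrading'' stage cannot start.

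The second half of your outline --- extracting $h=\lim h_n$ via the $\dKF$-compactness of $\Aut(Y)$ (Lemma \ref{lem:Aut-conv}), using closedness of $H$, passing the relation to the limit with Lemma \ref{lem:fcircg}, pinning down $h=\varphi\circ g\circ\varphi^{-1}$ with test functions $d_X(g(x),\cdot)$, and getting surjectivity from the symmetric half of the Hausdorff condition --- is correct and is essentially what the paper does. Note also that once the graph extraction is carried out, invoking the external nondegeneracy result becomes unnecessary (the paper's proof is self-contained for exactly this reason, and as a byproduct yields a \emph{new} proof of the nondegeneracy of Gromov's box and observable metrics, as remarked after Theorem \ref{thm:box-dist}); so the external citation in your first step does not save any work --- the work it is meant to replace still has to be done to tie $f$ to $\pi$.
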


\begin{proof}
Assume $\dconc((X,G),(Y,H)) = 0$ for two elements
$(X,G), (Y,H) \in \cX_\eq$.
By Lemma \ref{lem:dconc-pi},
there is
$\pi \in \Pi(X,Y)$ satisfying $\dconc^\pi((X,G),(Y,H)) = 0$.
Take any $g \in G$.
Then there is a sequence $h_n \in H$, $n=1,2,\dots$,
with $\rho^\pi(g,h_n) \to 0$.
For any $f \in \Lip_1(X)$ there is a sequence
$f_n' \in \Lip_1(Y)$, $n=1,2,\dots$, such that
$\dKF^\pi(f\circ p_1,f_n'\circ p_2) \to 0$ and
$\dKF^\pi(f\circ g\circ p_1,f_n'\circ h_n\circ p_2) \to 0$
as $n\to\infty$.
By the compactness of $H$, we may assume that
$h_n$ converges in measure to $h$ as $n\to\infty$
by replacing $\{h_n\}$ with a subsequence.
Also, we may assume that $f_n'$ converges in measure
to a function $f' \in \Lip_1(Y)$ (see the proof of
\cite{Sy:mmg}*{Lemma 4.45}).
Since
\begin{align*}
& |\dKF^\pi(f\circ p_1,f_n'\circ p_2)-\dKF^\pi(f\circ p_1,f'\circ p_2)|\\
&\le \dKF^\pi(f_n'\circ p_2,f'\circ p_2) = \dKF^{\mu_Y}(f_n',f')
\to 0 \quad\text{as}\ n\to\infty,
\end{align*}
we see that
$\dKF^\pi(f\circ p_1,f'\circ p_2) = 0$, i.e.,
$f\circ p_1 = f'\circ p_2$ on $\supp\pi$.
By using Lemma \ref{lem:fcircg}, the same discussion
yields that
$f\circ g\circ p_1 = f'\circ h\circ p_2$ on $\supp\pi$.

Take any $(x,y) \in \supp\pi$ and fix it.
Applying the above for $f := d_X(x,\cdot)$ yields
that there is $f' \in \Lip_1(Y)$ such that
$f\circ p_1 = f'\circ p_2$ on $\supp\pi$.
For any $(x',y') \in \supp\pi$, we have
$d_X(x,x') = f\circ p_1(x',y') = f'\circ p_2(x',y') = f'(y')$
and in particular, $f'(y) = 0$.
Therefore, $d_X(x,x') = f'(y') = f'(y') - f'(y) \le d_Y(y,y')$.
Exchanging $x$ and $y$ yields the opposite inequality.
We then obtain $d_X(x,x') = d_Y(y,y')$.
This determines an isometry
$\varphi : \Dom(\supp\pi) \to \Imag(\supp\pi)$
satisfying $(x,\varphi(x)) \in \supp\pi$
for any $x \in \Dom(\supp\pi)$.
Since $\Dom(\supp\pi)$ is dense in $\supp\mu_X = X$,
the map $\varphi$ extends to an isometry from $X$ to $Y$,
which we denote by the same notation $\varphi$.
We see that
$\pi = (\id_X,\varphi)_*\mu_X$ and $\varphi_*\mu_X = \mu_Y$,
so that $\varphi$ is an mm-isomorphism.
We also see that $f' = f\circ \varphi^{-1}$
for any $f \in \Lip_1(X)$.
Since $f\circ g\circ p_1 = f\circ \varphi^{-1}\circ h\circ p_2$ on $\supp\pi$, we have
$f\circ g = f\circ \varphi^{-1}\circ h \circ\varphi$.
Take any $x \in X$ and set $f := d_X(g(x),\cdot)$.
Then,
since $0 = f\circ g(x) = f\circ \varphi^{-1}\circ h \circ\varphi(x)$,
we have
$\varphi^{-1}\circ h \circ\varphi(x) = g(x)$ and so
$h = \varphi\circ g \circ\varphi^{-1}$.
This implies $\varphi\circ G \circ\varphi^{-1} \subset H$.
In the same way, $\varphi^{-1}\circ H \circ\varphi \subset G$.
$\rho(g) := \varphi\circ g \circ\varphi^{-1}$
is a group-isomorphism from $G$ to $H$.
Thus, $(X,G)$ and $(Y,H)$ are equivariantly mm-isomorphic
to each other.
This completes the proof.
\end{proof}

\begin{thm} \label{thm:dconc}
$\dconc$ is a metric on $\cX_\eq$.
\end{thm}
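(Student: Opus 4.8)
The plan is to verify the metric axioms---symmetry, the triangle inequality, and nondegeneracy---together with finiteness and well-definedness on the quotient $\cX_\eq$, observing that the two substantial ingredients are already in hand. The triangle inequality is exactly Lemma \ref{lem:dconc-triangle}(3), and one direction of nondegeneracy, namely that $\dconc((X,G),(Y,H)) = 0$ forces $(X,G) \cong (Y,H)$, is Lemma \ref{lem:dconc-nondeg}. Finiteness is immediate: since the Ky Fan metric never exceeds $1$, each $\rho^\pi_{g,h}$, each $\rho^\pi$, and hence $\dconc^\pi$ and $\dconc$ take values in $[0,1]$.

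First I would record symmetry. The coordinate swap $s : X \times Y \to Y \times X$, $s(x,y) := (y,x)$, induces a bijection $\Pi(X,Y) \ni \pi \mapsto s_*\pi \in \Pi(Y,X)$ under which $p_1$ and $p_2$ are interchanged. Consequently $\dKF^\pi(f \circ p_1, f' \circ p_2) = \dKF^{s_*\pi}(f' \circ p_1, f \circ p_2)$, and likewise with $g$ and $h$ inserted, so that $\rho^\pi_{g,h}(f,f')$ computed for $(X,G),(Y,H)$ coincides with the corresponding quantity for $(Y,H),(X,G)$ with the roles of $f,f'$ and of $g,h$ exchanged. Since the Hausdorff distance is symmetric in its two arguments, this yields $\dconc^\pi((X,G),(Y,H)) = \dconc^{s_*\pi}((Y,H),(X,G))$, and taking the infimum over $\pi$ gives $\dconc((X,G),(Y,H)) = \dconc((Y,H),(X,G))$.

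It remains to supply the reverse direction of nondegeneracy and the well-definedness on classes, both of which I would deduce from a single computation. Suppose $(X,G) \cong (Y,H)$ via an mm-isomorphism $\varphi : X \to Y$ intertwining $G$ and $H$ through a group-isomorphism, so that $\varphi \circ g = h \circ \varphi$ where $h$ denotes the image of $g$. Choosing the coupling $\pi := (\id_X,\varphi)_*\mu_X$, whose support is the graph of $\varphi$, and setting $f' := f \circ \varphi^{-1}$ for each $f \in \Lip_1(X)$, I would check that $f \circ p_1 = f' \circ p_2$ and $f \circ g \circ p_1 = f' \circ h \circ p_2$ hold on $\supp\pi$, the second identity using the equivariance of $\varphi$. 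Hence $\rho^\pi_{g,h} \equiv 0$ for every $g$, giving $\dconc^\pi((X,G),(Y,H)) = 0$ and thus $\dconc = 0$; this covers both the reflexive case (take $Y = X$, $\varphi = \id_X$) and the converse to Lemma \ref{lem:dconc-nondeg}. The same transport of couplings along $\varphi$ shows that $\dconc^\pi$, and hence $\dconc$, is unchanged when either argument is replaced by an equivariantly mm-isomorphic one, so $\dconc$ descends to a well-defined function on $\cX_\eq$. Assembling these facts with Lemmas \ref{lem:dconc-triangle}(3) and \ref{lem:dconc-nondeg} then gives the theorem.

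I do not anticipate a genuine obstacle, since the delicate analysis was already carried out in Lemma \ref{lem:dconc-nondeg}; the only point requiring mild care is to confirm that the coordinate swap and the graph coupling interact correctly with the superficially asymmetric definition of $\rho^\pi_{g,h}$, which pairs $f \in \Lip_1(X)$ against $f' \in \Lip_1(Y)$ and $g$ against $h$, so that both the symmetry identity and the vanishing computation go through cleanly.
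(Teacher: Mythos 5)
Your proposal is correct and follows the paper's own route: the paper's proof of Theorem \ref{thm:dconc} simply cites Lemma \ref{lem:dconc-triangle}(3) for the triangle inequality and Lemma \ref{lem:dconc-nondeg} for nondegeneracy, exactly the two pillars you invoke. The extra verifications you supply (symmetry via the coordinate swap of couplings, finiteness, and vanishing of $\dconc$ on equivariantly mm-isomorphic pairs via the graph coupling $(\id_X,\varphi)_*\mu_X$) are correct and merely spell out routine facts the paper leaves implicit.
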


\begin{proof}
The theorem follows from
Lemmas \ref{lem:dconc-triangle}(3) and \ref{lem:dconc-nondeg}.
\end{proof}

We next prove the nondegeneracy of $\square$.

For $S \subset X \times Y$ and $f \in \Lip_1(X)$,
we define a function $\tilde{f}_S : Y \to \R$ by
\[
\tilde{f}_S(y) := \inf_{(x',y') \in S} (d_Y(y,y') + f(x')),
\qquad y \in Y,
\]
which is a variant of the McShane-Whitney extension.
It follows from a standard discussion that
$\tilde{f}_S$ is $1$-Lipschitz continuous on $Y$.

\begin{lem} \label{lem:dconc-box}
Let $\pi \in \Pi(X,Y)$ and let
$S,S',S'' \subset \supp\pi$ be three closed subsets.
Then, for any $g,g' \in G$ and $h,h' \in H$, we have
\[
\rho^\pi(g,h)
\le 2 \max\{1-\pi(S),1-\pi(S'),1-\pi(S''),
d^S(g,h),d^{S'}(g',\id_Y),d^{S''}(\id_X,h')\}.
\]
\end{lem}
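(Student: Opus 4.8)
The plan is to unfold the Hausdorff distance $\rho^\pi(g,h)$ into its two approximation directions and, for each, to manufacture the required matching partner by a McShane--Whitney extension of the type $\tilde f_S$ introduced just above the statement. Writing $M$ for the maximum on the right-hand side, by the symmetry between the two spaces it suffices to treat one direction, namely to produce, for a given $f \in \Lip_1(X)$, a function $f' \in \Lip_1(Y)$ with $\rho^\pi_{g,h}(f,f') \le 2M$; the reverse direction is handled identically with the roles of $S'$ and $S''$ (and of $g'$ and $h'$) interchanged. Since $\rho^\pi_{g,h}(f,f')$ is by definition the maximum of the two Ky Fan distances $\dKF^\pi(f\circ p_1,f'\circ p_2)$ and $\dKF^\pi(f\circ g\circ p_1,f'\circ h\circ p_2)$, the whole estimate splits into an \emph{identity part} and an \emph{equivariance part}, and I would bound each by $2M$ separately.

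For the construction I set $f' := \tilde f_{S'}$, which is $1$-Lipschitz by the remark preceding the statement. The role of $g'$ is to control the distortion of $S'$: exactly as in the proof of Proposition \ref{prop:box-boxeq}, the inequality $d^{S'}(g',\id_Y) \le M$ forces $d_X(g'(x),x) \le M$ on $p_1(S')$ and hence $|d_X(x_1,x_2)-d_Y(y_1,y_2)| \le 2M$ for all $(x_i,y_i) \in S'$. Feeding this distortion bound into the definition of $\tilde f_{S'}$ gives $|f(x)-f'(y)| \le 2M$ for every $(x,y) \in S'$; since $\pi(S') \ge 1-M$, this yields $\dKF^\pi(f\circ p_1,f'\circ p_2) \le 2M$, the identity part.

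The equivariance part is where $S$ and $d^S(g,h)$ enter. The inequality $d^S(g,h)\le M$ says precisely that the correspondence $S$ intertwines $g$ and $h$ up to $M$: for $(x,y),(x',y')\in S$ one has $|d_X(g(x),x')-d_Y(h(y),y')|\le M$. Restricting the infimum defining $f'$ to the part of $S'$ lying in $S$ and invoking this relation gives the lower bound $f'(h(y)) \ge f(g(x))-M$; for the matching upper bound I would locate a point $(x',y')$ of the correspondence whose $X$-coordinate is essentially $g(x)$, so that $f(x') \approx f(g(x))$ and, by the intertwining relation, $d_Y(h(y),y')$ is small. Here the measure-preservation of $g$ is essential: because $g_*\mu_X=\mu_X$, the point $g(x)$ falls into the projection $p_1(S\cap S')$ for all $x$ outside a set of small measure, which is what makes the infimum nearly attained near $g(x)$. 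Combining the two bounds on the resulting large set and using $\pi(S),\pi(S')\ge 1-M$ yields $\dKF^\pi(f\circ g\circ p_1,f'\circ h\circ p_2)\le 2M$. The reverse direction repeats the argument with $f:=\tilde{f'}_{S''}$, the distortion of $S''$ now controlled through $h'$ via $d^{S''}(\id_X,h')\le M$, and the near-attainment of the infimum near $h(y)$ coming from the measure-preservation of $h$.

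The main obstacle is exactly this equivariance part. The tension is that the identity part forces the extension set to have small distortion, which is only guaranteed on $S'$ (through $g'$), whereas the lower bound in the equivariance part needs \emph{every} point of the extension set to obey the intertwining relation, which only holds on $S$. One is therefore driven to work on $S\cap S'$, and the delicate point is to carry out the measure bookkeeping---controlling simultaneously the exceptional sets coming from $\{(x,y)\notin S\}$ and from $\{g(x)\notin p_1(S\cap S')\}$, the latter estimated via measure-preservation of $g$---so that the total exceptional mass, and hence both Ky Fan distances, stay within the factor $2$ allowed by the statement. The clean separation of the three sets $S,S',S''$ in the hypothesis is precisely what makes this bookkeeping possible.
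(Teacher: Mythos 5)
Your overall strategy --- a McShane--Whitney extension, the split of $\rho^\pi_{g,h}$ into an ``identity part'' and an ``equivariance part'', and the symmetric treatment of the reverse direction via $S''$ and $h'$ --- is exactly the paper's, but your construction as written has a genuine gap. You set $f' := \tilde{f}_{S'}$ and then claim that ``restricting the infimum defining $f'$ to the part of $S'$ lying in $S$'' gives the lower bound $f'(h(y)) \ge f(g(x)) - M$. That step is invalid: $f'(h(y))$ is the infimum over \emph{all} of $S'$, and restricting the infimum to $S\cap S'$ only controls the restricted infimum, which is an upper bound for nothing relevant; a competitor $(x',y') \in S' \setminus S$ is not subject to the intertwining relation $d^S(g,h) \le M$, so nothing prevents $d_Y(h(y),y')$ from being tiny while $d_X(g(x),x')$ is huge, in which case $d_Y(h(y),y') + f(x')$ drops far below $f(g(x)) - M$. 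The paper avoids exactly this by defining $f' := \tilde{f}_{S\cap S'}$ from the outset: then every competitor lies in $S$ (so the equivariance estimates apply to all of them) and in $S'$ (so the distortion bound coming from $g'$ gives the identity part, now at the cost $\pi(S\cap S') \ge 1 - 2M$). Your final paragraph correctly diagnoses this tension and says one is ``driven to work on $S\cap S'$'', but the construction and estimates are never redone on $S \cap S'$; as it stands, the proof does not go through.

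Second, the constant you claim is not substantiated. Even with the corrected extension $f' = \tilde{f}_{S\cap S'}$, the good set for the equivariance part is $S\cap S' \cap (g^{-1}(\Dom(S\cap S'))\times Y)$: the term $\mu_X\bigl(X \setminus g^{-1}(\Dom(S\cap S'))\bigr) \le 1 - \pi(S\cap S') \le 2M$ does use the measure preservation of $g$, as you note, but it costs $2M$ on top of the $2M$ for $S\cap S'$ itself, so the Ky Fan bound one gets for $\dKF^\pi(f\circ g\circ p_1, f'\circ h\circ p_2)$ is $4M$, not $2M$. Indeed, the paper's own proof concludes $\rho^\pi_{g,h}(f,f') \le 4\varepsilon$ and hence $\rho^\pi(g,h) \le 4\varepsilon$, and Proposition \ref{prop:dconc-box} applies the lemma with the factor $4$; the factor $2$ appearing in the lemma's statement is not what the paper's argument delivers either. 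So your assertion that ``both Ky Fan distances stay within the factor $2$'' would require a genuinely new idea in the bookkeeping, which the proposal does not supply.
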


\begin{proof}
We put
\[
\varepsilon := \max\{1-\pi(S),1-\pi(S'),1-\pi(S''),
d^S(g,h),d^{S'}(g',\id_Y),d^{S''}(\id_X,h')\}.
\]
For any $f \in \Lip_1(X)$, we set $f' := \tilde{f}_{S \cap S'}$.

Let us prove
\begin{equation} \label{eq:dconc-box-1}
\dKF^\pi(f\circ p_1,f'\circ p_2) \le 2\varepsilon. 
\end{equation}
Take any $(x,y) \in S \cap S'$.
It is obvious that $f'(y) \le f(x)$.
From $d^{S'}(g',\id_Y) \le \varepsilon$, we see that
\begin{align*}
f(x)-f'(y) &= \sup_{(x',y')\in S \cap S'} (f(x) - f(x') - d_Y(y,y'))\\
&\le \sup_{(x',y')\in S \cap S'} (d_X(x,x')- d_Y(y,y')) \le 2\varepsilon
\end{align*}
(see the proof of Proposition \ref{prop:box-boxeq}
for the last inequality).
This together with $\pi(S \cap S') \ge 1-2\varepsilon$
implies \eqref{eq:dconc-box-1}.

We next prove
\begin{equation} \label{eq:dconc-box-2}
\dKF^\pi(f\circ g\circ p_1,f'\circ h\circ p_2) \le 4\varepsilon.
\end{equation}
Take any $(x,y) \in S \cap S' \cap (g^{-1}(\Dom(S \cap S')) \times Y)$.
Since $d^S(g,h) \le \varepsilon$, we see
\begin{align*}
f'\circ h(y) &= \inf_{(x',y') \in S \cap S'} (d_Y(h(y),y') + f(x'))\\
&\le \inf_{(x',y') \in S \cap S'} (d_X(g(x),x') + f(x')) + \varepsilon
\le f\circ g(x)+\varepsilon.
\end{align*}
By the $1$-Lipschitz continuity of $f$ and
by $d^S(g,h) \le \varepsilon$,
\begin{align*}
f\circ g(x)-f'\circ h(y) &= \sup_{(x',y')\in S \cap S'} (f\circ g(x) - f(x') - d_Y(h(y),y'))\\
&\le \sup_{(x',y')\in S \cap S'} (d_X(g(x),x')- d_Y(h(y),y')) \le \varepsilon.
\end{align*}
We thus obtain
$|f\circ g(x)-f'\circ h(y)| \le \varepsilon$, which together with
$\pi(S \cap S' \cap (g^{-1}(\Dom(S \cap S')) \times Y))
\ge 1-4\varepsilon$
implies \eqref{eq:dconc-box-2}.

\eqref{eq:dconc-box-1} and \eqref{eq:dconc-box-2}
together imply
$\rho^\pi_{g,h}(f,f') \le 4\varepsilon$.

In the same way, 
for any $f' \in \Lip_1(Y)$ there is $f \in \Lip_1(X)$
such that $\rho^\pi_{g,h}(f,f') \le 4\varepsilon$.
We thus obtain $\rho^\pi(g,h) \le 4\varepsilon$.
This completes the proof.
\end{proof}

\begin{prop} \label{prop:dconc-box}
For any $(X,G),(Y,H) \in \cX_\eq$ we have
\[
\dconc((X,G),(Y,H)) \le 4\,\square((X,G),(Y,H)).
\]
\end{prop}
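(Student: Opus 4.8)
The plan is to reduce the inequality to a pointwise comparison between the equivariant box and observable distances, with the previously established Lemma \ref{lem:dconc-box} as the workhorse. Unwinding the definitions, it suffices to show that whenever $\square((X,G),(Y,H)) < \varepsilon$ there is a coupling $\pi$ for which $\dconc^\pi((X,G),(Y,H)) \le 4\varepsilon$, since taking the infimum over $\varepsilon > \square((X,G),(Y,H))$ then yields the claim. So I would fix $\varepsilon > \square((X,G),(Y,H))$ and choose, by the definition of $\square$, a coupling $\pi \in \Pi(X,Y)$ with $\square^\pi((X,G),(Y,H)) < \varepsilon$.

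The core of the argument is to produce, for each $g \in G$, a matching $h \in H$ with $\rho^\pi(g,h) \le 4\varepsilon$, and symmetrically for each $h \in H$. I would invoke Lemma \ref{lem:box} in the form characterizing $\square^\pi < \varepsilon$: applying part (i) to the given $g$ yields $h \in H$ and a closed $S \subset \supp\pi$ with $\pi(S) > 1-\varepsilon$ and $d^S(g,h) < \varepsilon$. The remaining inputs needed by Lemma \ref{lem:dconc-box} are the two sets $S'$ and $S''$ controlling $d^{S'}(g',\id_Y)$ and $d^{S''}(\id_X,h')$. These I would obtain by applying Lemma \ref{lem:box} once more to the identity elements: part (ii) with $h := \id_Y$ produces $g' \in G$ and a closed $S' \subset \supp\pi$ with $\pi(S') > 1-\varepsilon$ and $d^{S'}(g',\id_Y) < \varepsilon$, and part (i) with $g := \id_X$ produces $h' \in H$ and a closed $S'' \subset \supp\pi$ with $\pi(S'') > 1-\varepsilon$ and $d^{S''}(\id_X,h') < \varepsilon$. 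With $S,S',S''$ and the associated bounds in hand, Lemma \ref{lem:dconc-box} immediately gives
\[
\rho^\pi(g,h) \le 2\max\{1-\pi(S),1-\pi(S'),1-\pi(S''),d^S(g,h),d^{S'}(g',\id_Y),d^{S''}(\id_X,h')\} \le 2\varepsilon,
\]
which is even better than the $4\varepsilon$ I need.

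Running the symmetric version (starting from Lemma \ref{lem:box}(ii) to match an arbitrary $h \in H$ with some $g \in G$) gives the reverse covering, so the Hausdorff distance between $G$ and $H$ with respect to $\rho^\pi$ satisfies $\dconc^\pi((X,G),(Y,H)) \le 4\varepsilon$. Hence $\dconc((X,G),(Y,H)) \le \dconc^\pi((X,G),(Y,H)) \le 4\varepsilon$, and letting $\varepsilon \downarrow \square((X,G),(Y,H))$ completes the proof. I expect no genuine obstacle here: the nontrivial estimates are already packaged inside Lemma \ref{lem:dconc-box}, and the only point demanding care is recognizing that Lemma \ref{lem:dconc-box} requires three auxiliary data $(S,g,h)$, $(S',g',\id_Y)$, $(S'',\id_X,h')$, so that a single application of Lemma \ref{lem:box}(i) is not enough — one must also extract the identity-matching sets $S'$ and $S''$, which is exactly where the factor measuring how well $\id_X$ and $\id_Y$ are approximated within the coupling enters.
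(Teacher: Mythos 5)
Your proposal is correct and follows essentially the same route as the paper's proof: choose $\pi$ with $\square^\pi((X,G),(Y,H))<\varepsilon$, match each $g\in G$ to some $h\in H$ via Lemma \ref{lem:box}, extract the identity-matching data $(S',g')$ and $(S'',h')$ by applying Lemma \ref{lem:box} to $\id_Y$ and $\id_X$, and conclude with Lemma \ref{lem:dconc-box}; you are in fact more explicit than the paper, whose proof leaves the extraction of $S'$ and $S''$ implicit. The only caveat is that your bound $\rho^\pi(g,h)\le 2\varepsilon$ relies on the constant $2$ in the statement of Lemma \ref{lem:dconc-box}, whereas the paper's proof of that lemma actually establishes only $\rho^\pi(g,h)\le 4\varepsilon$ (which is why the proposition carries the constant $4$); this does not affect your argument, since the bound $4\varepsilon$ suffices for the claimed inequality.
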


\begin{proof}
Take any $\varepsilon$ with
$\varepsilon > \square((X,G),(Y,H))$.
There is $\pi \in \Pi(X,Y)$ such that
$\square^\pi((X,G),(Y,H)) < \varepsilon$.
For any $g \in G$ there are $h \in H$ and
a closed subset $S \subset \supp\pi$ such that
$\pi(S) \ge 1-\varepsilon$ and $d^S(g,h) \le \varepsilon$.
This together with Lemma \ref{lem:dconc-box} implies
$\rho^\pi(g,h) \le 4\varepsilon$.
The same is true if we exchange $g$ and $h$.
We thus obtain
$\dconc((X,G),(Y,H)) \le \dconc^\pi((X,G),(Y,H)) < 4\varepsilon$. 
This completes the proof.
\end{proof}

\begin{thm} \label{thm:box-dist}
$\square$ is a metric on $\cX_\eq$.
\end{thm}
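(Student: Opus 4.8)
The plan is to check the four defining properties of a metric; three of them are either already in hand or immediate, leaving nondegeneracy as the only substantive point.

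First I would record the easy properties. Nonnegativity and finiteness are automatic: $\square$ is an infimum of Hausdorff distances of the pseudo-metrics $d^\pi$, hence $\ge 0$, while the remark following the definition gives $\square \le 1$. For reflexivity I would use the diagonal coupling $\pi := (\id_X,\id_X)_*\mu_X$, whose support is the diagonal of $X \times X$; taking $S := \supp\pi$ gives $\pi(S) = 1$ and $d^S(g,g) = 0$ for every $g \in G$ (the two functions $f_g$ in the definition of $d^S$ coincide), so $\square((X,G),(X,G)) = 0$. For symmetry I would observe that the reflection $(x,y)\mapsto(y,x)$ is a bijection $\Pi(X,Y)\to\Pi(Y,X)$ carrying a coupling $\pi$ to a coupling $\pi'$ with $\pi'(S^{-1}) = \pi(S)$, and that $d^{S^{-1}} = d^S$ on $X^X\sqcup Y^Y$ since $f_\gamma$ is insensitive to which factor is listed first. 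As the Hausdorff distance between $G$ and $H$ is symmetric in its two arguments, this gives $\square^{\pi'}((Y,H),(X,G)) = \square^\pi((X,G),(Y,H))$, and taking infima over couplings yields $\square((X,G),(Y,H)) = \square((Y,H),(X,G))$.

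The triangle inequality is exactly Lemma \ref{lem:box-triangle}, so nothing further is needed there.

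It remains to prove nondegeneracy, and here the entire strategy is to offload the work onto the observable distance. Suppose $\square((X,G),(Y,H)) = 0$. By Proposition \ref{prop:dconc-box} we have $\dconc((X,G),(Y,H)) \le 4\,\square((X,G),(Y,H)) = 0$, hence $\dconc((X,G),(Y,H)) = 0$. Since $\dconc$ is nondegenerate on $\cX_\eq$ by Lemma \ref{lem:dconc-nondeg} (equivalently Theorem \ref{thm:dconc}), we conclude $(X,G) \cong (Y,H)$. I do not expect any genuine obstacle in the present theorem: all of the real difficulty was already absorbed into the nondegeneracy of $\dconc$---whose proof reconstructs the mm-isomorphism and group-isomorphism from the condition $f\circ p_1 = f'\circ p_2$ on $\supp\pi$---and into the comparison Proposition \ref{prop:dconc-box}. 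The present statement is therefore a bookkeeping assembly of these facts together with the elementary symmetry and reflexivity checks above.
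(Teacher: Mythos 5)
Your proposal is correct and follows exactly the paper's own route: triangle inequality from Lemma \ref{lem:box-triangle}, symmetry (and reflexivity) as elementary checks, and nondegeneracy obtained by combining the comparison $\dconc \le 4\,\square$ of Proposition \ref{prop:dconc-box} with the nondegeneracy of $\dconc$ from Lemma \ref{lem:dconc-nondeg}. The only difference is that you spell out the diagonal-coupling and reflection arguments that the paper dismisses as trivial, which is fine.
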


\begin{proof}
Lemma \ref{lem:box-triangle} says a triangle inequality for $\square$.
The symmetry is trivial.
Combining Lemma \ref{lem:dconc-nondeg}
and Proposition \ref{prop:dconc-box} implies
the nondegeneracy of $\square$.
This completes the proof.
\end{proof}

Lemma \ref{lem:dconc-nondeg}
and Theorem \ref{thm:box-dist} also lead us
the nondegeneracy of the original observable and box metrics
without group actions.   This gives a new proof
of the nondegeneracy.

\section{Convergence of quotient spaces}

In this section,
we prove that if a sequence of mm-spaces with group actions
converges with respect to $\square/\dconc$, then so does the sequence of the quotient spaces.

For $(X,G) \in \cX_\eq$, we denote by
$d_{X/G}$ the \emph{quotient metric} on $X/G$
(which is defined as the distance between orbits)
and set $\mu_{X/G} := (\pi_G)_*\mu_X$,
where $\pi_G : X \to X/G$ is the quotient map.
Note that $d_{X/G}$ is a pseudo-metric in general.

\begin{lem}
For any $(X,G) \in \cX_\eq$, any $G$-orbit is closed
and in particular, $d_{X/G}$ is a metric.
\end{lem}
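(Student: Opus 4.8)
The plan is to derive both claims from the compactness of $\Aut(X)$ in the Ky Fan metric established in Lemma \ref{lem:Aut-conv}. Since $G \subset \Aut(X)$ is closed and $(\Aut(X),\dKF)$ is compact, $G$ is itself compact with respect to $\dKF$. Everything will then reduce to transporting this compactness into the space $X$ via the evaluation map.

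First I would show that each orbit $Gx$, $x \in X$, is compact. Consider the evaluation map $\mathrm{ev}_x : \Aut(X) \to X$, $\mathrm{ev}_x(\varphi) := \varphi(x)$. By the equivalence (iii)$\Rightarrow$(ii) in Lemma \ref{lem:Aut-conv}, convergence in the Ky Fan metric (equivalently, in measure) implies pointwise convergence, so $g_n \to g$ in $\dKF$ forces $g_n(x) \to g(x)$; since $(\Aut(X),\dKF)$ is a metric space, this sequential continuity gives continuity of $\mathrm{ev}_x$. Restricting to the compact subgroup $G$, the orbit $Gx = \mathrm{ev}_x(G)$ is the continuous image of a compact set, hence compact, and therefore closed in the metric space $X$.

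It then remains to upgrade the pseudo-metric $d_{X/G}$ to a metric, i.e.\ to prove nondegeneracy. Since $G$ acts isometrically, the distance between orbits can be rewritten as $d_{X/G}(Gx,Gy) = \inf_{g \in G} d_X(x,g(y))$. Suppose this vanishes; then there are $g_n \in G$ with $d_X(x,g_n(y)) \to 0$, so that $x$ lies in the closure of the orbit $Gy$. By the first step $Gy$ is closed, whence $x \in Gy$ and therefore $Gx = Gy$. Thus $d_{X/G}$ separates distinct orbits and is a genuine metric; symmetry and the triangle inequality hold already at the pseudo-metric level and need no comment.

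The only delicate point is the very first reduction: passing from Ky Fan compactness of $G$ to topological closedness of the orbit inside $X$ hinges on the continuity of the evaluation map, which in turn rests on the (a priori nonobvious) equivalence of the Ky Fan topology with pointwise convergence on $\Aut(X)$ — precisely the content of Lemma \ref{lem:Aut-conv}. Granting that lemma, the remaining arguments are routine point-set topology.
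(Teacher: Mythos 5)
Your proof is correct and takes essentially the same approach as the paper: both rest on the compactness of $\Aut(X)$ in the Ky Fan metric together with the equivalence of Ky Fan and pointwise convergence (Lemma \ref{lem:Aut-conv}), plus the closedness of $G$ in $\Aut(X)$. The paper phrases it as a sequential argument (extract a pointwise-convergent subsequence of $g_n$ with $g_n(x)\to y$ and use closedness of $G$ to get the limit in $G$), which you merely repackage as compactness of $G$ and continuity of the evaluation map; your explicit treatment of the nondegeneracy of $d_{X/G}$ fills in a step the paper leaves implicit, but the underlying argument is the same.
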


\begin{proof}
Let $y \in X$ be a point in
the closure of the $G$-orbit of a point $x \in X$.
It then suffices to prove that $y$ belongs to the $G$-orbit of $x$.
There is a sequence $\{g_n\} \subset G$ such that
$g_n(x) \to y$ as $n\to\infty$.
By Lemma \ref{lem:Aut-conv}, replacing $\{g_n\}$ with
a subsequence, we see that
$g_n$ converges pointwise to an element $g \in \Aut(X)$.
Hence, $g_n(x) \to g(x)$ and $y = g(x)$.
Since $G$ is closed, $g$ belongs to $G$.
This completes the proof.
\end{proof}

We consider the thick-thin decomposition.
Assume that a sequence of mm-spaces $X_n$, $n=1,2,\dots$,
box-converges to an mm-space $Y$.
For $v, r > 0$, we put
$Y(v,r) := \{\,y \in Y \mid \mu_Y(B_r(y)) > v\,\}$.
For any $\varepsilon > 0$ there is $v_\varepsilon > 0$
such that
$\mu_Y(Y(v_\varepsilon,\varepsilon)) > 1 - \varepsilon$
and $v_\varepsilon \to 0$ as $\varepsilon \to 0+$.
Let $Y_\varepsilon := Y(v_\varepsilon,\varepsilon)$ and
$X_{n,\varepsilon} := \{\,x \in X_n \mid \mu_{X_n}(B_{2\varepsilon}(x)) > v_\varepsilon/2\,\}$,
which we call the \emph{thick parts} of $Y$ and $X_n$ 
respectively.
Then, $X_{n,\varepsilon}$ is $\Aut(X_n)$-invariant
and $Y_\varepsilon$ is $\Aut(Y)$-invariant.

\begin{lem} \label{lem:Xneps}
For any $n$ and $\varepsilon > 0$ with
$\square(X_n,Y) < v_\varepsilon/2$, we have
\[
\mu_{X_n}(X_{n,\varepsilon}) > 1-\varepsilon - \square(X_n,Y).
\]
\end{lem}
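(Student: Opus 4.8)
The plan is to unwind the hypothesis $\square(X_n,Y) < v_\varepsilon/2$ into a coupling together with a controlled closed set, and then to transport the thick part of $Y$ onto that of $X_n$ by the marginal‑intersection inequality displayed at the end of Section 2. Fix any $\delta'$ with $\square(X_n,Y) < \delta' < v_\varepsilon/2$. We may assume from the outset that the thick‑thin parameters satisfy $v_\varepsilon \le 2\varepsilon$ (shrinking each $v_\varepsilon$ only enlarges $Y(v_\varepsilon,\varepsilon)$ and keeps $v_\varepsilon \to 0$), so that also $\delta' \le \varepsilon$. By the definition of $\square$ and Lemma \ref{lem:box} applied to the trivial groups, there are $\pi \in \Pi(X_n,Y)$ and a closed subset $S \subset \supp\pi$ with $\pi(S) > 1-\delta'$ and $d^S(\id_{X_n},\id_Y) < \delta'$, the latter meaning $|d_{X_n}(x,x') - d_Y(y,y')| < \delta'$ for all $(x,y),(x',y') \in S$.

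The core step is the pointwise containment $S \cap p_2^{-1}(Y_\varepsilon) \subset p_1^{-1}(X_{n,\varepsilon})$, that is, if $(x,y) \in S$ with $y \in Y_\varepsilon$ then $x \in X_{n,\varepsilon}$. I would prove it by transporting the ball $B_\varepsilon(y)$ into $B_{2\varepsilon}(x)$: for $(x',y') \in S$ with $d_Y(y,y') < \varepsilon$, the distortion bound gives $d_{X_n}(x,x') < \varepsilon + \delta' \le 2\varepsilon$, whence $S \cap p_2^{-1}(B_\varepsilon(y)) \subset p_1^{-1}(B_{2\varepsilon}(x))$. Therefore
\[
\mu_{X_n}(B_{2\varepsilon}(x)) \ge \pi\bigl(S \cap p_2^{-1}(B_\varepsilon(y))\bigr) \ge \mu_Y(B_\varepsilon(y)) - (1-\pi(S)) > v_\varepsilon - \delta' > v_\varepsilon/2,
\]
where the second inequality is the marginal‑intersection estimate and the last two use $y \in Y_\varepsilon$ and $\delta' < v_\varepsilon/2$. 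Thus $x \in X_{n,\varepsilon}$. Here the doubling of the radius ($\varepsilon \rightsquigarrow 2\varepsilon$) absorbs the distortion and the halving of the threshold ($v_\varepsilon \rightsquigarrow v_\varepsilon/2$) absorbs the mass leakage $1-\pi(S)$, so the two definitions of thick part are exactly calibrated to the two error sources.

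It then remains to integrate. Since $\mu_{X_n}(X_{n,\varepsilon}) = \pi(p_1^{-1}(X_{n,\varepsilon}))$, the containment yields
\[
\mu_{X_n}(X_{n,\varepsilon}) \ge \pi\bigl(S \cap p_2^{-1}(Y_\varepsilon)\bigr) \ge \pi(S) + \mu_Y(Y_\varepsilon) - 1 > \mu_Y(Y_\varepsilon) - \delta'.
\]
Letting $\delta' \downarrow \square(X_n,Y)$ gives $\mu_{X_n}(X_{n,\varepsilon}) \ge \mu_Y(Y_\varepsilon) - \square(X_n,Y)$, and since $\mu_Y(Y_\varepsilon) > 1-\varepsilon$ by the choice of $v_\varepsilon$, the desired strict bound $\mu_{X_n}(X_{n,\varepsilon}) > 1-\varepsilon - \square(X_n,Y)$ follows. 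I expect the only genuinely delicate point to be the distortion constraint $\delta' \le \varepsilon$ required for the ball transport: this is what forces one to have arranged $v_\varepsilon \le 2\varepsilon$ when fixing the thick‑thin parameters, since the hypothesis $\square(X_n,Y) < v_\varepsilon/2$ by itself controls only the mass leakage and not the distortion. The measurability of $X_{n,\varepsilon}$, of $Y_\varepsilon$, and of the relevant analytic sets is routine and handled as in Section 2.
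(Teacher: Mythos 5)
Your proof is correct and follows essentially the same route as the paper's: both transport the thick part of $Y$ into that of $X_n$ via a distortion-controlled closed subset $S\subset\supp\pi$, using the ball inclusion $S\cap p_2^{-1}(B_\varepsilon(y))\subset p_1^{-1}(B_{2\varepsilon}(x))$ and the two-marginal measure estimate $\pi(A\cap B)\ge \pi(A)+\pi(B)-1$. The only (welcome) refinements are that you work with an approximating $\delta'>\square(X_n,Y)$ rather than an optimal coupling, and that you state explicitly the normalization $v_\varepsilon\le 2\varepsilon$ which the paper leaves implicit when it asserts $S_n^{-1}(B_\varepsilon(y))\subset B_{2\varepsilon}(x)$.
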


\begin{proof}
Let $\varepsilon_n := \square(X_n,Y)$.
There is $\pi_n \in \Pi(X_n,Y)$ for each $n$
such that $\square^{\pi_n}(X_n,Y) = \varepsilon_n$.
Take any $x \in S_n^{-1}(Y_\varepsilon)$.
There is $y \in Y_\varepsilon$ with $(x,y) \in S_n$.
Since
$p_1((X_n \times B_\varepsilon(y)) \cap S_n) = S_n^{-1}(B_\varepsilon(y)) \subset B_{2\varepsilon}(x)$, we see
\begin{align*}
&\mu_{X_n}(B_{2\varepsilon}(x)) \ge \mu_{X_n}(p_1((X_n \times B_\varepsilon(y)) \cap S_n))\\
& = \pi_n(p_1^{-1}(p_1((X_n \times B_\varepsilon(y)) \cap S_n))
\ge \pi_n((X_n \times B_\varepsilon(y)) \cap S_n)\\
&\ge v_\varepsilon-\varepsilon_n > v_\varepsilon/2,
\end{align*}
which implies $x \in X_{n,\varepsilon}$.
Therefore, $S_n^{-1}(Y_\varepsilon) \subset X_{n,\varepsilon}$
and $\mu_{X_n}(X_{n,\varepsilon}) \ge \mu_{X_n}(S_n^{-1}(Y_\varepsilon))\\
\ge \pi_n((X_n \times Y_\varepsilon) \cap S_n)
> 1-\varepsilon-\varepsilon_n$.
This completes the proof.
\end{proof}

\begin{lem} \label{lem:Gn-precpt}
If a sequence $(X_n,G_n) \in \cX_\eq$, $n=1,2,\dots$,
equivariantly box converges to an element $(Y,H) \in \cX_\eq$,
then $\{(G_n,\dKF)\}_{n=1}^\infty$ is precompact
with respect to the Gromov-Hausdorff topology.
\end{lem}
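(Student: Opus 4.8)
The plan is to verify Gromov's precompactness criterion: since the Ky Fan metric always satisfies $\dKF\le 1$, every space $(G_n,\dKF)$ has diameter at most $1$, so it suffices to show that the family $\{(G_n,\dKF)\}$ is \emph{uniformly totally bounded}, i.e.\ that for each $\delta>0$ there is a number $N(\delta)$, independent of $n$, such that $(G_n,\dKF)$ is covered by $N(\delta)$ balls of radius $\delta$. By Lemma \ref{lem:Aut-conv} each $\Aut(X_n)$ is $\dKF$-compact, so each closed subgroup $G_n$ is itself compact and hence totally bounded; it is therefore enough to produce the uniform bound $N(\delta)$ for all sufficiently large $n$, since the finitely many remaining compact spaces contribute only finitely many covering numbers whose maximum can be absorbed into $N(\delta)$. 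By Proposition \ref{prop:box-boxeq}, equivariant box convergence $(X_n,G_n)\to(Y,H)$ forces $\square(X_n,Y)\to 0$, which is what lets us invoke the thick-thin estimate of Lemma \ref{lem:Xneps}.

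The first key step is that the thick part $X_{n,\varepsilon}$ has a covering number bounded uniformly in $n$. Fix $\varepsilon>0$ and choose a maximal $4\varepsilon$-separated subset $\{x_1,\dots,x_N\}\subset X_{n,\varepsilon}$. The balls $B_{2\varepsilon}(x_i)$ are pairwise disjoint, and each has $\mu_{X_n}$-measure exceeding $v_\varepsilon/2$ by the definition of the thick part; comparing with the total mass $1$ gives $N<2/v_\varepsilon$. By maximality the balls $B_{4\varepsilon}(x_i)$ cover $X_{n,\varepsilon}$, so $X_{n,\varepsilon}$ is covered by fewer than $2/v_\varepsilon$ balls of radius $4\varepsilon$, a bound depending only on $\varepsilon$. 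Moreover the thick part is $\Aut(X_n)$-invariant, so $g(x_i)\in X_{n,\varepsilon}$ for every $g\in G_n$.

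The second step encodes each $g\in G_n$ by its action on this net. To $g\in G_n$ I assign a map $\phi_g\colon\{1,\dots,N\}\to\{1,\dots,N\}$ by letting $\phi_g(i)$ be an index with $d_{X_n}(g(x_i),x_{\phi_g(i)})\le 4\varepsilon$, which exists because $g(x_i)$ lies in the thick part. There are at most $N^N$ such maps. If $g,g'\in G_n$ satisfy $\phi_g=\phi_{g'}$, then $d_{X_n}(g(x_i),g'(x_i))\le 8\varepsilon$ for every $i$, and for an arbitrary $x\in X_{n,\varepsilon}$, choosing $i$ with $x\in B_{4\varepsilon}(x_i)$ and using that $g,g'$ are isometries yields $d_{X_n}(g(x),g'(x))\le 16\varepsilon$. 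Since the complement of $X_{n,\varepsilon}$ has measure at most $\varepsilon+\square(X_n,Y)$ by Lemma \ref{lem:Xneps}, we get $\dKF(g,g')\le\max\{16\varepsilon,\ \varepsilon+\square(X_n,Y)\}$. Given a target $\delta>0$ I set $\varepsilon:=\delta/16$ and restrict to $n$ large enough that $\square(X_n,Y)<\min\{v_\varepsilon/2,\ 15\varepsilon\}$; then any two elements of the same type lie within $\dKF$-distance $\delta$, so the assignment $g\mapsto\phi_g$ partitions $G_n$ into at most $N^N<(2/v_\varepsilon)^{2/v_\varepsilon}$ classes of $\dKF$-diameter at most $\delta$, and one point per class gives a covering of $(G_n,\dKF)$ by that many balls of radius $\delta$. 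This is the desired uniform bound $N(\delta)$.

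I expect the delicate point to be the interplay between the two scales, since the single parameter $\varepsilon$ simultaneously governs the measure of the complement of the thick part and the radius of the net; the estimate $\dKF(g,g')\le\max\{16\varepsilon,\ \varepsilon+\square(X_n,Y)\}$ must be arranged so that both terms become small together, which forces the coupling $\varepsilon=\delta/16$ together with the largeness requirement on $n$. The essential gain, and the reason the bound is uniform in $n$, is that the lower measure bound $v_\varepsilon/2$ on the balls $B_{2\varepsilon}(x_i)$ caps the number of net points independently of $n$; this is precisely the role of the thick-thin decomposition. As remarked in the introduction, one should note that this argument yields only precompactness of $\{(G_n,\dKF)\}$ and \emph{not} convergence to $(H,\dKF)$, since the Ky Fan limit of a convergent subsequence of the $G_n$ need not equal $H$.
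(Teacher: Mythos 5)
Your proof is correct, but it takes a genuinely different route from the paper's. The paper fixes a $\delta$-discrete net $\{g_i\}$ of $(G_n,\dKF)$ and shows, using the thick part much as you do, that such a net must be $(v_\varepsilon/2)$-discrete with respect to the coupling pseudo-metric $d^{\pi_n}$; it then bounds the cardinality of $d^{\pi_n}$-discrete subsets of $G_n$ by comparison with the fixed compact group $(H,d^{\pi_n})$ (compactness from Lemma \ref{lem:Aut-conv}), using the Hausdorff closeness $\square^{\pi_n}((X_n,G_n),(Y,H))\to 0$. Your argument never passes through $d^{\pi_n}$ or the limit group $H$ at all: you encode each $g\in G_n$ by its approximate action on a finite $4\varepsilon$-net of the thick part, bound the number of net points by $N<2/v_\varepsilon$ via the disjoint-ball measure count, bound the number of codes by $N^N$, and show that elements with the same code are $\dKF$-close; all steps check out, including the hypothesis $\square(X_n,Y)<v_\varepsilon/2$ needed to invoke Lemma \ref{lem:Xneps} and the handling of the finitely many small $n$. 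A consequence worth noting is that you use the hypothesis only through $\square(X_n,Y)\to 0$ (via Proposition \ref{prop:box-boxeq}), so your argument actually proves the formally stronger statement that box convergence of the underlying spaces alone forces precompactness of $\{(G_n,\dKF)\}$ for arbitrary closed subgroups $G_n\subset\Aut(X_n)$, which fits naturally with the properness statement of Theorem \ref{thm:subconv}. What the paper's route buys in exchange is that it keeps the comparison between $G_n$ and $H$ in the pseudo-metric $d^{\pi_n}$ explicit, and this pairing of elements of $G_n$ with elements of $H$ is exactly the device reused immediately afterwards in the proof of Theorem \ref{thm:box-quot}. Your closing remark --- that one obtains precompactness but not $\dKF$-convergence of $G_n$ to $H$ --- correctly reflects the delicacy the paper emphasizes in its introduction.
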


\begin{proof}
Take any $\delta > 0$.
Let $\{g_i\}_{i=1}^N$ be a $\delta$-discrete net of $G_n$
with respect to $\dKF$.
It suffices to prove that $N$ is bounded from above for every $n$.
There is $\varepsilon > 0$ such that
$v_\varepsilon/2 + 4\varepsilon < \delta$.
Assume that $n$ is so large that
$\square((X_n,G_n),(Y,H)) < \varepsilon/2$.

Let us prove $d^{\pi_n}(g_i,g_j) \ge v_\varepsilon/2$
for all $i \neq j$.
If $d^{\pi_n}(g_i,g_j) < v_\varepsilon/2$
for a pair of distinct $i$ and $j$,
then there is a closed subset $S_n \subset \supp\pi_n$
such that $\pi_n(S_n) \ge 1-v_\varepsilon/2$ and
\begin{equation} \label{eq:Gn-precpt}
|\,d_{X_n}(g_i(x_1),x_2)  - d_{X_n}(g_j(x_1),x_2)\,| \le v_\varepsilon/2
\end{equation}
for any $x_1,x_2 \in p_1(S_n)$.
Put $S_{n,\varepsilon} := (X_{n,\varepsilon} \times Y) \cap S_n$.
Since Proposition \ref{prop:box-boxeq} implies
$\square(X_n,Y) < \varepsilon$, we see
by Lemma \ref{lem:Xneps} that
$\mu_{X_n}(X_{n,\varepsilon}) \ge 1-2\varepsilon$,
which proves
$\pi_n(S_{n,\varepsilon}) \ge 1 - v_\varepsilon/2 - 2\varepsilon$.
We also have
$\mu_{X_n}(p_1(S_n)) \ge (p_1)_*\pi_n(p_1(S_n))
\ge \pi_n(S_n) \ge 1-v_\varepsilon/2$.
For any $x \in p_1(S_{n,\varepsilon})$,
we see $g_i(x) \in X_{n,\varepsilon}$
and so there is $x' \in p_1(S_n)$ such that
$d_{X_n}(g_i(x),x') \le 2\varepsilon$.
Since
\eqref{eq:Gn-precpt} implies
$|\,d_{X_n}(g_i(x),x')  - d_{X_n}(g_j(x),x')\,| \le v_\varepsilon/2$,
we have $d_{X_n}(g_j(x),x') \le v_\varepsilon/2 + 2\varepsilon$.
By a triangle inequality,
$d_{X_n}(g_i(x),g_j(x)) \le v_\varepsilon/2 + 4\varepsilon < \delta$.
This contradicts $\dKF(g_i,g_j) \ge \delta$.
We have proved $d^{\pi_n}(g_i,g_j) \ge v_\varepsilon/2$
for all $i \neq j$.

The net $\{g_i\}_{i=1}^N$ is $(v_\epsilon/2)$-discrete
with respect to $d^{\pi_n}$.
It follows from Lemma \ref{lem:Aut-conv} that
$G_n$ and $H$ are compact with respect to $d^{\pi_n}$,
where we remark that $d^{\pi_n}$ on $H$ is independent of $n$.
By $\square^{\pi_n}((X_n,G_n),(Y,H)) \to 0$,
we have the precompactness of $\{G_n\}$ with respect to 
$d^{\pi_n}$.  Thus, $N$ is bounded from above.
This completes the proof.
\end{proof}

\begin{thm} \label{thm:box-quot}
If a sequence $(X_n,G_n) \in \cX_\eq$, $n=1,2,\dots$,
equivariantly box converges to an element $(Y,H) \in \cX_\eq$,
then $X_n/G_n$ box converges to $Y/H$.
\end{thm}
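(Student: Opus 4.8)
The plan is to realize the box convergence of the quotients by pushing forward, through the quotient maps, a coupling that witnesses the equivariant box convergence upstairs, and then to control the quotient (orbit) distances on a large set by combining the precompactness of the groups (Lemma~\ref{lem:Gn-precpt}) with the thick-thin decomposition (Lemma~\ref{lem:Xneps}). First I would fix, for each $n$, a coupling $\pi_n \in \Pi(X_n,Y)$ with $\varepsilon_n := \square^{\pi_n}((X_n,G_n),(Y,H)) \to 0$, and set $\bar\pi_n := (\pi_{G_n}\times\pi_H)_*\pi_n \in \Pi(X_n/G_n,Y/H)$. Since $\square(X_n/G_n,Y/H) \le \square^{\bar\pi_n}(X_n/G_n,Y/H)$, it suffices to produce a closed set $\bar S_n \subset \supp\bar\pi_n$ with $\bar\pi_n(\bar S_n)\to 1$ on whose square the two quotient metrics differ by $o(1)$; this $\bar S_n$ will be the closure of the image $(\pi_{G_n}\times\pi_H)(S_n)$ of a suitable $S_n\subset\supp\pi_n$. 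Because $d_{X_n/G_n}(\bar x_1,\bar x_2)=\inf_{g\in G_n}d_{X_n}(gx_1,x_2)$ and $d_{Y/H}(\bar y_1,\bar y_2)=\inf_{h\in H}d_Y(hy_1,y_2)$, the whole problem reduces to comparing these two infima for $(x_i,y_i)\in S_n$.

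The crux, and the main obstacle, is that for a near-minimizer $g^\ast\in G_n$ of the first infimum, the set on which the equivariant comparison $d^S(g^\ast,h^\ast)<\varepsilon_n$ holds depends on $g^\ast$, hence on the pair $(x_1,x_2)$, and so cannot be fixed in advance. Here I would invoke Lemma~\ref{lem:Gn-precpt}: it yields, for each $\delta>0$, a $\delta$-net $\{g_i\}_{i=1}^N$ of $(G_n,\dKF)$ whose cardinality $N=N(\delta)$ is bounded uniformly in $n$, while the compactness of $H$ (Lemma~\ref{lem:Aut-conv}) gives a finite $\delta$-net $\{h_j\}_{j=1}^M$ of $H$. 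By Lemma~\ref{lem:box} each $g_i$ (resp.\ each $h_j$) has a partner $h_i\in H$ (resp.\ $g_j\in G_n$) and a closed set $S_i,S_j'\subset\supp\pi_n$ of $\pi_n$-measure $>1-\varepsilon_n$ realizing $d^{S_i}(g_i,h_i)<\varepsilon_n$ (resp.\ $d^{S_j'}(g_j,h_j)<\varepsilon_n$). The thick parts then convert $\dKF$-closeness into \emph{pointwise} closeness at thick points: if $\delta<v_\varepsilon/2$ and $x_1\in X_{n,\varepsilon}$, then $\{\xi\mid d_{X_n}(g^\ast\xi,g_i\xi)\le\delta\}$ and $B_{2\varepsilon}(x_1)$ have total measure exceeding $1$, hence meet, and as $g^\ast,g_i$ are isometries one gets $d_{X_n}(g^\ast x_1,g_i x_1)\le\delta+4\varepsilon$; the symmetric statement holds in $Y$ at points of $Y_\varepsilon$.

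With this I would put $S_n := \bigcap_{i=1}^N S_i \cap \bigcap_{j=1}^M S_j' \cap (X_{n,\varepsilon}\times Y_\varepsilon)$. Using Lemma~\ref{lem:Xneps} (with Proposition~\ref{prop:box-boxeq} to ensure $\square(X_n,Y)\le 2\varepsilon_n$) together with $\mu_Y(Y_\varepsilon)>1-\varepsilon$, one gets $\pi_n(S_n)\ge 1-(N+M)\varepsilon_n-3\varepsilon$. For $(x_i,y_i)\in S_n$, choosing a near-minimizer $g^\ast$ of $\inf_g d_{X_n}(gx_1,x_2)$ and the net element $g_i$ with $\dKF(g^\ast,g_i)<\delta$, the bound $d^{S_i}(g_i,h_i)<\varepsilon_n$ and the thick-point estimate give
\[
d_{Y/H}(\bar y_1,\bar y_2)\le d_Y(h_i y_1,y_2)\le d_{X_n}(g_i x_1,x_2)+\varepsilon_n\le d_{X_n/G_n}(\bar x_1,\bar x_2)+\delta+4\varepsilon+\varepsilon_n,
\]
and symmetrically, using $\{h_j\}$ and $Y_\varepsilon$, the reverse inequality. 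Hence $|d_{X_n/G_n}(\bar x_1,\bar x_2)-d_{Y/H}(\bar y_1,\bar y_2)|\le\delta+4\varepsilon+\varepsilon_n$ on $S_n$, which passes to $\bar S_n:=\overline{(\pi_{G_n}\times\pi_H)(S_n)}\subset\supp\bar\pi_n$ with $\bar\pi_n(\bar S_n)\ge\pi_n(S_n)$.

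Finally I would let $n\to\infty$ with $\delta,\varepsilon$ held fixed subject to $\delta<v_\varepsilon/2$. Since $N$ and $M$ depend only on $\delta$, the measure defect $(N+M)\varepsilon_n\to 0$, so
\[
\square(X_n/G_n,Y/H)\le\max\{(N+M)\varepsilon_n+3\varepsilon,\ \delta+4\varepsilon+\varepsilon_n\}
\]
forces $\limsup_{n}\square(X_n/G_n,Y/H)\le\delta+4\varepsilon$. Taking $\varepsilon$ small and then $\delta<\min(v_\varepsilon/2,\varepsilon)$ makes the right-hand side arbitrarily small, so the quotients box-converge. The one genuinely delicate point, as the authors flag in the introduction, is exactly the step where precompactness of the $G_n$ and the thick parts jointly upgrade the $\dKF$-net to pointwise control of orbits; without this the $g^\ast$-dependence of the comparison sets could not be tamed uniformly in $n$.
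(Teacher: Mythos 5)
Your proposal is correct and follows essentially the same route as the paper's proof: push the coupling forward through the quotient maps, use Lemma \ref{lem:Gn-precpt} together with the compactness of $H$ to get finite Ky Fan nets of cardinality bounded uniformly in $n$, and use the thick parts (Lemma \ref{lem:Xneps}, via Proposition \ref{prop:box-boxeq}) to upgrade $\dKF$-closeness to pointwise closeness of orbits at points of $X_{n,\varepsilon}$ and $Y_\varepsilon$. The only cosmetic difference is that the paper arranges a single paired family $\{g_{n,k}\}$, $\{h_{n,k}\}$ with $d^{\pi_n}(g_{n,k},h_{n,k})\le\varepsilon_n$ where both families are nets, whereas you take independent nets of $G_n$ and of $H$ and attach partners via Lemma \ref{lem:box}; the resulting estimates are identical, and your bookkeeping arguably sidesteps the small justification needed for the paper's simultaneous-net claim.
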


\begin{proof}
Let $\varepsilon_n := \square((X_n,G_n),(Y,H))$.
There is $\pi_n \in \Pi(X_n,Y)$ for each $n$ such that
$\square^{\pi_n}((X_n,G_n),(Y,H)) = \varepsilon_n$.

Taking any $\varepsilon > 0$, we set
$\varepsilon' := \min\{v_\varepsilon,\varepsilon\}/3$.
Assume that $n$ is so large that
$\varepsilon_n$ is very small compared with $\varepsilon'$.
By the precompactness of $\{G_n\}$
(Lemma \ref{lem:Gn-precpt}) and by the compactness of $H$,
there are an $\varepsilon'$-net $\{g_{n,k}\}_{k=1}^N$ of $G_n$
and an $\varepsilon'$-net $\{h_{n,k}\}_{i=1}^N$ of $H$
with respect to the Ky Fan metric such that
$d^{\pi_n}(g_{n,k},h_{n,k}) \le \varepsilon_n$,
where $N$ is independent of $n$.
There is a closed subset $S_{k,n} \subset \supp\pi_n$
for each $k$ such that
$\pi_n(S_{k,n}) \ge 1-\varepsilon_n$ and,
for any $(x_i,y_i) \in S_{k,n}$,
\[
|\,d_{X_n}(g_{n,k}(x_1),x_2) - d_Y(h_{n,k}(y_1),y_2))\,| \le \varepsilon_n.
\]
Put $S_n := (X_{n,\varepsilon} \times Y_\varepsilon)
\cap \bigcap_{k=1}^N S_{k,n}$.
Define a map
$p_{G_n} \times p_H : X_n \times Y \to X_n/G_n \times Y/H$
by
$p_{G_n} \times p_H(x,y) := (p_{G_n}(x),p_H(y))$,
and set
$\sigma_n := (p_{G_n} \times p_H)_*\pi_n$ and
$\mathcal{S}_n := p_{G_n} \times p_H(S_n)$.
We have
$\sigma_n(\mathcal{S}_n)
= \pi_n((p_{G_n}\times p_H)^{-1}(p_{G_n}\times p_H(S_n)))
\ge \pi_n(S_n) \ge 1-2\varepsilon-N\varepsilon_n \ge 1-3\varepsilon$.
Take any $([x_i],[y_i]) \in \mathcal{S}_n$, $i=1,2$.
We may assume $(x_i,y_i) \in S_n$.
There is $h \in H$ with $d_Y(h(y_1),y_2) = d_{Y/H}([y_1],[y_2])$.
Also there is $k$ such that $\dKF(h,h_{n,k}) \le \varepsilon'$.
It follows from $y_1 \in Y_\varepsilon$ that
$\mu_Y(B_\varepsilon(y_1)) \ge v_\varepsilon > \varepsilon'$,
so that there is $y_1' \in B_\varepsilon(y_1)$
with $d_Y(h_{n,k}(y_1'),h(y_1')) \le \varepsilon'$.
By the isometric property of $h_{n,k}$, $h$
and by a triangle inequality, we have
$d_Y(h_{n,k}(y_1),h(y_1)) \le 2\varepsilon + \varepsilon' \le 3\varepsilon$ and hence
\begin{align*}
& d_{X_n/G_n}([x_1],[x_2]) \le d_{X_n}(g_{n,k}(x_1),x_2)
\le d_Y(h_{n,k}(y_1),y_2) + \varepsilon_n\\
&\le  d_Y(h(y_1),y_2) + 3\varepsilon + \varepsilon_n
\le d_{Y/H}([y_1],[y_2]) + 4\varepsilon.
\end{align*}

In the same way, there is $g \in G_n$ with
$d_Y(g(x_1),x_2) = d_{X_n/G_n}([x_1],[x_2])$.
There is $k$ with $\dKF(g,g_{n,k}) \le \varepsilon'$.
It follows from $x_1 \in X_{n,\varepsilon}$ that
$\mu_{X_n}(B_{2\varepsilon}(x_1)) \ge v_\varepsilon/2 >
\varepsilon'$, so that there is
$x_1' \in B_{2\varepsilon}(x_1)$ with
$d_{X_n}(g_{n,k}(x_1'),g(x_1')) \le \varepsilon'$.
We have $d_{X_n}(g_{n,k}(x_1),g(x_1)) \le 4\varepsilon + \varepsilon' \le 5\varepsilon$ and hence
\[
d_{Y/H}([y_1],[y_2]) \le d_Y(h_{n,k}(y_1),y_2)
\le d_{X_n/G_n}([x_1],[x_2]) + 5\varepsilon.
\]
Thus, for any $([x_i],[y_i]) \in \mathcal{S}_n$, $i=1,2$,
\[
|\,d_{X_n/G_n}([x_1],[x_2]) - d_{Y/H}([y_1],[y_2])\,| \le 5\varepsilon,
\]
which implies $\square(X_n/G_n,Y/H) \le 5\varepsilon$.
This completes the proof.
\end{proof}

\begin{lem} \label{lem:dKF-L1}
Let $X$ be an mm-space, $f, g: X \to \R$ two $L^1$-functions,
and let $D \ge 0$.  Then we have the following.
\begin{enumerate}
\item $\dKF(f,g)^2 \le \|f - g\|_{L^1}$.
\item If $|f|, |g| \le D$, then
$\|f-g\|_{L^1} \le (2D+1) \dKF(f,g)$.
\end{enumerate}
\end{lem}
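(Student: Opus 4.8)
The plan is to work directly from the definition of the Ky Fan metric. Writing $h := |f-g|$, which is a nonnegative $L^1$-function, and $\varepsilon_0 := \dKF(f,g)$, I recall that $\dKF(f,g)$ is the infimum of those $\varepsilon > 0$ with $\mu_X(\{h > \varepsilon\}) \le \varepsilon$. A preliminary observation I would establish first is that this infimum is attained, i.e.\ $\mu_X(\{h > \varepsilon_0\}) \le \varepsilon_0$: taking $\varepsilon_m \downarrow \varepsilon_0$ with $\mu_X(\{h > \varepsilon_m\}) \le \varepsilon_m$, the sets $\{h > \varepsilon_m\}$ increase to $\{h > \varepsilon_0\}$, so continuity of the measure from below gives the claim. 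Both parts then reduce to splitting $\int_X h\,d\mu_X$ at the threshold $\varepsilon_0$ and applying a Chebyshev-type estimate.

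For (1), I would use only the defining property on the side $\varepsilon < \varepsilon_0$: for such $\varepsilon$ one has $\mu_X(\{h > \varepsilon\}) > \varepsilon$, whence
\[
\|f-g\|_{L^1} = \int_X h\,d\mu_X \ge \int_{\{h > \varepsilon\}} h\,d\mu_X \ge \varepsilon\,\mu_X(\{h > \varepsilon\}) > \varepsilon^2.
\]
Letting $\varepsilon \to \varepsilon_0-$ yields $\|f-g\|_{L^1} \ge \varepsilon_0^2 = \dKF(f,g)^2$, which is the assertion.

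For (2), under the bound $|f|,|g| \le D$ one has $0 \le h \le 2D$, and I would split the integral using the attained inequality $\mu_X(\{h > \varepsilon_0\}) \le \varepsilon_0$ together with $\mu_X(X)=1$:
\[
\|f-g\|_{L^1} = \int_{\{h \le \varepsilon_0\}} h\,d\mu_X + \int_{\{h > \varepsilon_0\}} h\,d\mu_X
\le \varepsilon_0\cdot 1 + 2D\,\mu_X(\{h > \varepsilon_0\}) \le \varepsilon_0 + 2D\varepsilon_0,
\]
which is exactly $(2D+1)\,\dKF(f,g)$.

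Since both estimates are elementary Chebyshev-type bounds, I do not expect a serious obstacle. The only point requiring a little care is the boundary behaviour of the Ky Fan infimum, namely verifying $\mu_X(\{h > \varepsilon_0\}) \le \varepsilon_0$ so that part (2) may use the threshold $\varepsilon_0$ itself rather than an approximating $\varepsilon > \varepsilon_0$; using the latter would leave an extra term $2D(\varepsilon-\varepsilon_0)$ that I would then have to send to zero, so establishing attainment up front keeps the argument clean.
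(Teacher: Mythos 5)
Your proof is correct and follows essentially the same route as the paper's: both parts are Chebyshev-type estimates obtained by splitting the integral of $|f-g|$ at the Ky Fan threshold. The only difference is cosmetic: you explicitly verify that the Ky Fan infimum is attained (via continuity of the measure from below), whereas the paper uses the facts $\mu_X(|f-g|\ge\varepsilon)\ge\varepsilon$ and $\mu_X(|f-g|>\varepsilon)\le\varepsilon$ at $\varepsilon=\dKF(f,g)$ without comment.
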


\begin{proof}
Put $\varepsilon := \dKF(f,g)$.

We prove (1).
Since $\mu_X(|f-g| \ge \varepsilon) \ge \varepsilon$, we have
\begin{align*}
\|f - g\|_{L_1} \ge \int_{\{|f-g| \ge \varepsilon\}} |f-g| d\mu_X
\ge \varepsilon \mu_X(|f-g| \ge \varepsilon) \ge \varepsilon^2
\end{align*}

We prove (2).  If $|f|, |g| \le D$, then
\begin{align*}
\|f - g\|_{L_1}
&\le \int_{\{|f-g| > \varepsilon\}} |f-g| \;d\mu_X
+\int_{\{|f-g| \le \varepsilon\}} |f-g| \;d\mu_X\\
&\le 2D \mu_X(|f-g| > \varepsilon) + \varepsilon
\le (2D+1)\varepsilon.
\end{align*}
This completes the proof.
\end{proof}

\begin{defn}[Observable diameter \cite{Gmv:green}]
For $\kappa > 0$, we define
the \emph{$\kappa$-observable diameter} of an mm-space $X$
to be
\[
\ObsDiam(X;-\kappa) := \sup_{f \in \Lip_1(X)} \diam(f_*\mu_X;-\kappa),
\]
where $\diam(\nu;-\kappa)$ for a Borel measure $\nu$ on $\R$
is the infimum of the diameter of Borel subsets $A \subset \R$
with $\nu(A) \ge 1-\kappa$.
\end{defn}

\begin{thm} \label{thm:conc-quot}
For any $(X,G), (Y,H) \in \cX_\eq$ and $\kappa > 0$,
setting
\[
D_\kappa := \max\{\ObsDiam(X;-\kappa),\ObsDiam(Y;-\kappa)\},
\]
we have
\[
\dconc(X/G,Y/H) \le \sqrt{3(D_\kappa+1)\dconc((X,G),(Y,H))}
+\kappa.
\]
In particular, if a sequence $(X_n,G_n) \in \cX_\eq$, $n=1,2,\dots$,
$\dconc$-converges to an element $(Y,H) \in \cX_\eq$,
then $X_n/G_n$ $\dconc$-converges to $Y/H$.
\end{thm}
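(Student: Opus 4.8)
The plan is to transport a coupling realizing the equivariant observable distance down to the quotients, and to turn a competitor $1$-Lipschitz function into a genuinely invariant one by averaging over the acting group, which is \emph{compact} by Lemma \ref{lem:Aut-conv}; the price of this averaging is paid by an $L^1$ estimate controlled by the observable diameter, which is where the factor $D_\kappa$ and the additive $\kappa$ will come from. First I would set $\varepsilon := \dconc((X,G),(Y,H))$ and, by Lemma \ref{lem:dconc-pi}(2), choose $\pi \in \Pi(X,Y)$ with $\dconc^\pi((X,G),(Y,H)) = \varepsilon$. Pushing $\pi$ forward under the quotient maps $p_G : X \to X/G$ and $p_H : Y \to Y/H$ produces $\sigma := (p_G \times p_H)_*\pi \in \Pi(X/G,Y/H)$, so that $\dconc(X/G,Y/H) \le \dconc^\sigma(X/G,Y/H)$ and it suffices to bound the latter. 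Since $H$ is a closed subgroup of the compact group $\Aut(Y)$, it carries a Haar probability measure $dh$, and for $f' \in \Lip_1(Y)$ the average $\hat{f'}(y) := \int_H f'(h^{-1}y)\,dh$ is again $1$-Lipschitz and $H$-invariant, hence descends to $\bar{f'} \in \Lip_1(Y/H)$; the same applies to $G$ on $X$. A function $\bar f \in \Lip_1(X/G)$ lifts to the $G$-invariant $f := \bar f \circ p_G \in \Lip_1(X)$, and from $\sigma = (p_G\times p_H)_*\pi$ one checks $\dKF^\sigma(\bar f\circ p_1,\bar{f'}\circ p_2) = \dKF^\pi(f\circ p_1,\hat{f'}\circ p_2)$, so everything reduces to comparing $f\circ p_1$ with $\hat{f'}\circ p_2$ under $\pi$.

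The crucial point — and the step I expect to be the main obstacle — is that a competitor $f'$ furnished by $\dconc^\pi((X,G),(Y,H)) \le \varepsilon$ is \emph{not} $H$-invariant, so it cannot be descended to $Y/H$ directly; one must first establish that it is \emph{approximately} $H$-invariant, and crucially for \emph{every} $h \in H$, not merely for the element matched to $\id_X$. Fixing $f'$ with $\dKF^\pi(f\circ p_1,f'\circ p_2) \le \varepsilon$, I would argue as follows. Given $h \in H$, the two-sided Hausdorff matching supplies $g \in G$ with $\rho^\pi(g,h) \le \varepsilon$ and a function $\tilde f' \in \Lip_1(Y)$ with $\rho^\pi_{g,h}(f,\tilde f') \le \varepsilon$; since $f$ is $G$-invariant this reads $\dKF^\pi(f\circ p_1,\tilde f'\circ p_2) \le \varepsilon$ and $\dKF^\pi(f\circ p_1,\tilde f'\circ h\circ p_2) \le \varepsilon$, whence $\dKF^{\mu_Y}(\tilde f',\tilde f'\circ h) \le 2\varepsilon$. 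Using that $h$ preserves $\mu_Y$ (so $\dKF^{\mu_Y}(u\circ h,v\circ h) = \dKF^{\mu_Y}(u,v)$) together with the identity $\dKF^\pi(u\circ p_2,v\circ p_2) = \dKF^{\mu_Y}(u,v)$ (valid because $(p_2)_*\pi = \mu_Y$), the triangle inequality lets me replace $\tilde f'$ by $f'$ at the cost of a further $4\varepsilon$, giving $\dKF^{\mu_Y}(f',f'\circ h) \le 6\varepsilon$ for all $h \in H$.

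With approximate invariance in hand I would estimate in $L^1$. By Lemma \ref{lem:dKF-L1}(1), $\dKF^\pi(f\circ p_1,\hat{f'}\circ p_2)^2 \le \|f\circ p_1 - \hat{f'}\circ p_2\|_{L^1(\pi)}$, and Jensen's inequality applied to $\hat{f'}(y) = \int_H f'(h^{-1}y)\,dh$ bounds the right-hand side by $\int_H \|f\circ p_1 - f'\circ h^{-1}\circ p_2\|_{L^1(\pi)}\,dh$. For each $h$ the integrand is controlled by Lemma \ref{lem:dKF-L1}(2): truncating $f$ and $f'\circ h^{-1}$ to intervals of length $D_\kappa$ — which is exactly where $\ObsDiam(X;-\kappa)$, $\ObsDiam(Y;-\kappa)$ and the additive $\kappa$ (recording the $\pi$-measure of the discarded set) enter — renders both functions bounded by $D_\kappa$ off a set of measure $\le \kappa$, while $\dKF^\pi(f\circ p_1,f'\circ h^{-1}\circ p_2) \le \varepsilon + 6\varepsilon$ by the previous paragraph. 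Reassembling, tracking the truncation error and taking the square root yields a bound of the claimed shape $\sqrt{3(D_\kappa+1)\,\dconc((X,G),(Y,H))}+\kappa$ (the exact constant being a matter of bookkeeping). The symmetric construction — lifting $\bar{f'} \in \Lip_1(Y/H)$ to an $H$-invariant function and averaging over $G$ — gives the matching inequality, so $\dconc^\sigma(X/G,Y/H)$ obeys the same bound.

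Finally, for the ``in particular'' assertion I would fix $\kappa > 0$ and use $\dconc(X_n,Y) \le \dconc((X_n,G_n),(Y,H)) \to 0$; since the observable diameter is controlled under $\dconc$-convergence, $\ObsDiam(X_n;-\kappa)$ stays bounded as $n \to \infty$, hence so does $D_\kappa$ along the sequence. The displayed inequality then gives $\limsup_{n\to\infty} \dconc(X_n/G_n,Y/H) \le \kappa$, and letting $\kappa \to 0+$ finishes the proof.
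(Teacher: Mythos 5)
Your proposal follows essentially the same route as the paper's proof: lift $\bar f\in\Lip_1(X/G)$ to a $G$-invariant function, use the equivariant matching to control the compositions $f'\circ h\circ p_2$ for \emph{every} $h\in H$, average over the compact group $H$ with its Haar measure, and pass between the Ky Fan and $L^1$ distances via Lemma \ref{lem:dKF-L1} together with a truncation governed by the observable diameter, which is exactly how $D_\kappa$ and the additive $\kappa$ enter in the paper as well. The only real difference is bookkeeping: your detour through the approximate invariance $\dKF^{\mu_Y}(f',f'\circ h)\le 6\varepsilon$ inflates the constant to $7\varepsilon$ inside the square root, whereas combining your own intermediate estimates $\dKF^\pi(f\circ p_1,\tilde f'\circ h\circ p_2)\le\varepsilon$ and $\dKF^{\mu_Y}(\tilde f',f')\le 2\varepsilon$ directly gives $\dKF^\pi(f\circ p_1,f'\circ h\circ p_2)\le 3\varepsilon$ (this is what the paper does, truncating $f$ at the outset so that Lemma \ref{lem:dKF-L1}(2) applies with $D=D_\kappa/2$), which yields the stated constant $3$.
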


\begin{proof}
We first remark that we have the one to one
correspondence between
the set of $G$-invariant functions in $\Lip_1(X)$
and $\Lip_1(X/G)$, where
$f(x) = \bar{f}([x])$ for $G$-invariant $f \in \Lip_1(X)$
and $\bar{f} \in \Lip_1(X/G)$.

Take any $\varepsilon$ with $\dconc((X,G),(Y,H)) < \varepsilon$.
There is $\pi \in \Pi(X,Y)$ such that
$\dconc^\pi((X,G),(Y,H)) < \varepsilon$.
For any $h \in H$ there is $g \in G$ with
$\rho^\pi(g,h) < \varepsilon$.
We take any $G$-invariant function $\hat{f} \in \Lip_1(X)$.
Since $\diam(\hat{f}_*\mu_X;-\kappa) \le D_\kappa$,
there is a real number $c$ such that
$\mu_X(|\hat{f}+c| \le D_\kappa/2) \ge 1-\kappa$.
Setting
$f(x) := \min\{\max\{\hat{f}(x)+c,-D_\kappa/2\},D_\kappa/2\}$,
$x \in X$, we see that
$|f| \le D_\kappa/2$ and $\dKF(f,\hat{f}+c) \le \kappa$.
By $\rho^\pi(g,h) < \varepsilon$,
there is $f_h' \in \Lip_1(Y)$ with $|f_h'| \le D_\kappa/2$
such that $\rho^\pi_{g,h}(f,f_h') < \varepsilon$.
Hence,
\[
\dKF^\pi(f\circ p_1,f_h'\circ p_2) < \varepsilon
\quad\text{and}\quad
\dKF^\pi(f\circ p_1,f_h'\circ h\circ p_2) < \varepsilon.
\]
Setting $f' := f_{\id_Y}'$ yields
$\dKF^\pi(f\circ p_1,f'\circ p_2) < \varepsilon$.
By a triangle inequality,
$\dKF^{\mu_Y}(f'\circ h,f_h'\circ h) = \dKF^{\mu_Y}(f',f_h') = \dKF^\pi(f'\circ p_2,f_h'\circ p_2) < 2\varepsilon$.
By a triangle inequality again,
$\dKF^\pi(f\circ p_1,f'\circ h\circ p_2) < 3\varepsilon$.
Denote by $\mu_{\Aut(Y)}$ the Haar probability measure
on $\Aut(Y)$ and define
\[
f''(y) := \int_{\Aut(Y)} f'\circ h(y) \, d\mu_{\Aut(Y)}(h),
\qquad y \in Y.
\]
The $f''$ is an $H$-invariant $1$-Lipschitz function
and satisfies $|f''| \le D_\kappa/2$.
For any $(x,y) \in \supp\pi$,
\begin{align*}
|f(x) - f''(y)|
&= \left|\int_{\Aut(Y)} (f(x) - f'\circ h(y)) \;d\mu_{\Aut(Y)}(h)\right|\\
&\le \int_{\Aut(Y)} |f(x) - f'\circ h(y)| \;d\mu_{\Aut(Y)}(h).
\end{align*}
Integrate this formula with respect to $d\pi(x,y)$
and apply Fubini's theorem and Lemma \ref{lem:dKF-L1}(2)
to obtain
\begin{align*}
\|f\circ p_1-f''\circ p_2\|_{L_1} &\le \int_{\Aut(Y)} \|f\circ p_1 - f'\circ h \circ p_2\|_{L_1} d\mu_{\Aut(Y)}(h)
\le 3(D_\kappa+1) \varepsilon,
\end{align*}
which together with Lemma \ref{lem:dKF-L1}(1)
implies
$\dKF^\pi(f\circ p_1,f''\circ p_2) \le \sqrt{3(D_\kappa+1) \varepsilon}$.
Therefore, $\dKF^\pi(\hat{f}\circ p_1 + c,f''\circ p_2)
\le \sqrt{3(D_\kappa+1) \varepsilon} + \kappa$.
The same is true if we exchange $X$ and $Y$.
We thus obtain the first part of the theorem.

The latter follows from the first
because of the boundedness of $\ObsDiam(X_n;-\kappa)$
which is implied by \cite{OzSy:limf}*{Theorem 1.1}.
\end{proof}

\section{Convergence of group actions}

In this section, we prove that the projection
$\cX_\eq \ni (X,G) \mapsto X \in \cX$
is a proper map with respect to $\square$,
which is analogous to \cite{FY:f-gp}*{Proposition 3.6},
where $\cX$ denotes the set of mm-isomorphism classes of mm-spaces.

We need some definition and lemmas for the proof.

\begin{defn}[Weak Hausdorff convergence]
Let $X$ be a metric space and
$S,S_n \subset X$, $n=1,2,\dots$, closed subsets.
We say that $S_n$ converges to $S$ in the \emph{weak Hausdorff}
sense if the following (i) and (ii) are satisfied.
\begin{enumerate}
\item[(i)] $\lim_{n\to\infty} d_X(x,S_n) = 0$ for any $x \in S$.
\item[(ii)] $\liminf_{n\to\infty} d_X(x,S_n) > 0$
for any $x \in X \setminus S$.
\end{enumerate}
\end{defn}

Note that $S_n$ and $S$ may be empty and
we set $d_X(x,\emptyset) = \infty$.

\begin{lem}[\cite{Sy:mmg}*{Lemma 6.6}]\label{lem:weakH-cpt}
The set of closed subsets of a metric space
is sequentially compact with respect to
the weak Hausdorff convergence.
\end{lem}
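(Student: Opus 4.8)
The plan is to recognize weak Hausdorff convergence as pointwise convergence of the distance functions $d_X(\cdot,S_n)$. Indeed, if along a subsequence the limit $\rho(x):=\lim_{n\to\infty}d_X(x,S_n)$ exists in $[0,+\infty]$ for every $x\in X$, then setting $S:=\rho^{-1}(0)$ yields both defining conditions at once: for $x\in S$ we get $\lim_n d_X(x,S_n)=0$, which is (i), and for $x\notin S$ we get $\liminf_n d_X(x,S_n)=\rho(x)>0$, which is (ii). Since $\rho$ will be continuous, $S$ is closed. Thus it suffices to extract a subsequence along which $d_X(\cdot,S_n)$ converges pointwise on all of $X$.

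First I would invoke separability of $X$ to fix a countable dense set $\{x_k\}_k$. Each value $d_X(x_k,S_n)$ lies in the compact space $[0,+\infty]$, so a diagonal argument gives a subsequence (not relabeled) along which $\ell(x_k):=\lim_n d_X(x_k,S_n)$ exists for every $k$. Each $d_X(\cdot,S_n)$ with $S_n\neq\emptyset$ is a finite-valued $1$-Lipschitz function, so the family is uniformly equicontinuous; hence convergence on the dense set propagates to pointwise convergence on all of $X$ with a $1$-Lipschitz limit $\rho$, exactly as in Lemma \ref{lem:Lip-conv} (only completeness of the target $\R$ is needed for this step). This produces the desired $\rho$ and completes the reduction.

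The step requiring care is the bookkeeping of degenerate values, since the definition explicitly permits $S_n,S=\emptyset$ with $d_X(x,\emptyset)=+\infty$. I would first pass to a subsequence on which either all $S_n$ are empty or all are nonempty. If all are empty, then $d_X(\cdot,S_n)\equiv+\infty$ and $S=\emptyset$ works trivially. If all are nonempty, the Lipschitz bound $|d_X(x_j,S_n)-d_X(x_k,S_n)|\le d_X(x_j,x_k)$ forces $\ell$ to be either finite everywhere on the dense set or identically $+\infty$ there; in the latter case the sets escape to infinity, $\rho\equiv+\infty$, and $S=\emptyset$ again satisfies (i) and (ii), while in the former case $\rho$ is finite and $1$-Lipschitz and the reduction of the first paragraph applies verbatim.

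The main obstacle is not a delicate estimate but the two structural points just isolated: guaranteeing that the pointwise limit exists on the whole space rather than only on the dense set, which is precisely where the uniform $1$-Lipschitz equicontinuity and Lemma \ref{lem:Lip-conv} enter, and the use of separability to run the diagonal extraction. I would also note that separability is genuinely being used, since sequential compactness of the hyperspace can fail for non-separable $X$; this is harmless here, as the metric spaces under consideration are separable.
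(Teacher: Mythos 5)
Your proof is correct. There is nothing in the paper itself to compare it against: the lemma is quoted from \cite{Sy:mmg}*{Lemma 6.6} without proof. Your argument---pass to a subsequence along which the $1$-Lipschitz functions $d_X(\cdot,S_n)$ converge pointwise to some $\rho:X\to[0,+\infty]$ (diagonal extraction over a countable dense set, then uniform $1$-Lipschitz equicontinuity to propagate convergence, with completeness needed only for the target $\R$ and not for $X$), and then set $S:=\rho^{-1}(0)$---is the natural and standard one. The verification that this $S$ works is right: condition (ii) only asks for a positive liminf, which the existence of the limit $\rho(x)>0$ supplies, and your case analysis for empty $S_n$ and for the dichotomy ``$\rho$ finite everywhere or $\rho\equiv+\infty$ on the dense set'' is sound.

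Your closing remark about separability deserves emphasis, because it is a genuine correction to the lemma as stated: for ``a metric space'' with no separability hypothesis the conclusion is false. Take $X=\{0,1\}^{\N}$ with the discrete metric and $S_n:=\{\,x\in X\mid x_n=1\,\}$; given any subsequence $(S_{n_k})_k$, the point $x^*$ defined by $x^*_{n_k}=1$ for even $k$ and all other coordinates $0$ satisfies $d_X(x^*,S_{n_k})=0$ for even $k$ and $d_X(x^*,S_{n_k})=1$ for odd $k$, so (i) fails if $x^*$ lies in a candidate limit $S$ and (ii) fails if it does not; hence no subsequence converges weakly Hausdorff. This does not affect the paper, where the lemma is applied only to separable spaces (in Lemma \ref{lem:lim-supp} and in the proof of Theorem \ref{thm:subconv}), but the hypothesis should read ``separable metric space,'' exactly as your proof requires.
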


\begin{lem} \label{lem:wHaus-cpt}
Let $X$ be a metric space and
$S,S_n \subset X$, $n=1,2,\dots$, closed subsets.
If $S_n$ converges to $S$ in the weak Hausdorff sense,
then for any compact subset $K \subset X$ and for any
$\varepsilon > 0$, there is a number $n_0$ such that
$S_n \cap K \subset U_\varepsilon(S \cap K)$ for $n \ge n_0$.
\end{lem}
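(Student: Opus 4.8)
The plan is to argue by contradiction, using the sequential compactness of $K$ to reduce the statement to a direct application of condition (ii) in the definition of weak Hausdorff convergence.

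First I would negate the conclusion: suppose there exist a compact set $K \subset X$ and an $\varepsilon > 0$ for which no such $n_0$ exists. Then I can find a subsequence $\{n_j\}$ with $n_j \to \infty$ and points $x_j \in S_{n_j} \cap K$ with $x_j \notin U_\varepsilon(S \cap K)$, that is, $d_X(x_j, S \cap K) \ge \varepsilon$ for every $j$. Since $K$ is compact, passing to a further subsequence I may assume $x_j \to x$ for some $x \in K$ (here I use that $K$ is closed to ensure $x \in K$).

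The next step is to locate the limit $x$. The set $S \cap K$ is closed, and the function $y \mapsto d_X(y, S \cap K)$ is $1$-Lipschitz, so $d_X(x, S \cap K) = \lim_{j\to\infty} d_X(x_j, S \cap K) \ge \varepsilon$; in particular $x \notin S \cap K$ (the case $S \cap K = \emptyset$ being trivial under the convention $d_X(\cdot,\emptyset) = \infty$). Combined with $x \in K$, this forces $x \in X \setminus S$. I would emphasize that it is precisely the appearance of $S \cap K$, rather than $S$, in the statement that makes this identification of the limit clean.

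Finally I invoke condition (ii): since $x \in X \setminus S$, we have $\liminf_{n\to\infty} d_X(x, S_n) > 0$. On the other hand, $x_j \in S_{n_j}$ together with $x_j \to x$ gives $d_X(x, S_{n_j}) \le d_X(x, x_j) \to 0$, and since $n_j \to \infty$ this forces $\liminf_{n\to\infty} d_X(x, S_n) = 0$, a contradiction, which completes the argument. I do not expect a genuine obstacle here, as this is a routine compactness argument; the only points requiring care are the extraction of a convergent subsequence out of $K$ and the correct use of $S \cap K$ (not $S$) so that the limit point is seen to lie in $X \setminus S$, where condition (ii) applies. Notably, condition (i) of weak Hausdorff convergence should not be needed at all.
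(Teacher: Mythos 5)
Your proof is correct and complete: the contradiction argument via sequential compactness of $K$, the identification of the limit point as lying in $X \setminus S$ (using $S \cap K$ rather than $S$, so that $x \in K$ forces $x \notin S$), and the application of condition (ii) alone are all sound. The paper omits the proof of this lemma, describing it only as ``an easy discussion using the compactness of $K$,'' and your argument is precisely that intended discussion.
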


Lemma \ref{lem:wHaus-cpt} follows from an easy discussion
using the compactness of $K$ and the proof is omitted.

\begin{lem} \label{lem:lim-supp}
Let $X$ be a complete separable metric space,
$S,S_n \subset X$ closed subsets,
and $\mu, \mu_n$ Borel probability measures, $n=1,2,\dots$.
If $\mu_n$ converges weakly to $\mu$
and if $S_n$ converges to $S$ in the weak Hausdorff,
then
\[
\mu(S) \ge \limsup_{n\to\infty} \mu_n(S_n).
\]
\end{lem}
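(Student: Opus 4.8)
The plan is to bound $\limsup_{n\to\infty}\mu_n(S_n)$ by the $\mu$-measure of closed neighborhoods of $S$ and then let those neighborhoods shrink to $S$. For $\varepsilon>0$ write $F_\varepsilon:=\{x\in X\mid d_X(x,S)\le\varepsilon\}$, which is closed; since $S$ is closed, $F_\varepsilon$ decreases to $S$ as $\varepsilon\to0+$, so $\mu(F_\varepsilon)\to\mu(S)$ by continuity of the finite measure $\mu$ from above. The target inequality then follows once I show $\limsup_{n\to\infty}\mu_n(S_n)\le\mu(F_\varepsilon)$ for every $\varepsilon>0$. Two difficulties must be handled: the sets $S_n$ move with $n$, so the portmanteau theorem cannot be applied to them directly, and $X$ need not be compact, so mass of $\mu_n$ could a priori escape to regions far from $S$.

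Both difficulties are resolved by combining tightness with Lemma \ref{lem:wHaus-cpt}. First I fix $\eta>0$. Since $\mu_n$ converges weakly, the family $\{\mu_n\}$ is tight (Prokhorov's theorem on complete separable metric spaces), so there is a compact set $K\subset X$ with $\mu_n(X\setminus K)<\eta$ for all $n$. Next, for the fixed $\varepsilon$, Lemma \ref{lem:wHaus-cpt} applied to $K$ yields an $n_0$ such that $S_n\cap K\subset U_\varepsilon(S\cap K)\subset F_\varepsilon$ for all $n\ge n_0$, using that the open $\varepsilon$-neighborhood of $S$ is contained in the closed neighborhood $F_\varepsilon$. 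Splitting $\mu_n(S_n)=\mu_n(S_n\cap K)+\mu_n(S_n\setminus K)\le\mu_n(F_\varepsilon)+\eta$ for $n\ge n_0$ and taking $\limsup$, the portmanteau theorem for the closed set $F_\varepsilon$ gives $\limsup_{n\to\infty}\mu_n(S_n)\le\mu(F_\varepsilon)+\eta$. Letting $\eta\to0$ removes the escaping-mass term, and then letting $\varepsilon\to0+$ yields $\limsup_{n\to\infty}\mu_n(S_n)\le\mu(S)$.

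The step I expect to be the crux is trapping $S_n$ near $S$ on a $\mu_n$-substantial region: on its own, weak Hausdorff convergence gives only the pointwise control of $d_X(\cdot,S_n)$ coming from condition (ii) of the definition, and does not force $S_n$ to lie in $F_\varepsilon$ globally. Restricting to the compact set $K$ is precisely what converts that pointwise condition into the uniform inclusion $S_n\cap K\subset F_\varepsilon$ supplied by Lemma \ref{lem:wHaus-cpt}, while tightness guarantees that discarding $X\setminus K$ costs at most $\eta$ in $\mu_n$-measure uniformly in $n$. Once these two ingredients are in place, the remainder is the routine portmanteau estimate together with the two successive limits $\eta\to0$ and $\varepsilon\to0+$.
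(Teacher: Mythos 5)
Your proof is correct and follows essentially the same route as the paper's: Prokhorov tightness to localize to a compact set, Lemma \ref{lem:wHaus-cpt} to trap $S_n\cap K$ inside an $\varepsilon$-neighborhood of $S$, the portmanteau estimate, and then shrinking the neighborhoods to $S$. Your use of the closed neighborhoods $F_\varepsilon$ even makes the portmanteau step slightly cleaner than the paper's version, which works with the open sets $U_\varepsilon(S\cap K_m)$ and leaves that point implicit.
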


\begin{proof}
Since Prokhorov's theorem implies the tightness of $\{\mu_n\}$,
there is a sequence of compact subsets
$K_m \subset X$, $m=1,2,\dots$, such that
$\mu_n(K_m) \ge 1-1/m$ and $\mu(K_m) \ge 1-1/m$
for every $m$.
It follows from Lemma \ref{lem:wHaus-cpt} that,
for any $\varepsilon > 0$,
\[
\mu(U_\varepsilon(S \cap K_m))
\ge \limsup_{n\to\infty} \mu(S_n \cap K_m)
\ge \limsup_{n\to\infty} \mu(S_n) - 1/m,
\]
which implies
\[
\mu(S) \ge \mu(S \cap K_m) \ge \limsup_{n\to\infty} \mu(S_n) - 1/m.
\]
This proves the lemma.
\end{proof}

The following is a main theorem of this section.

\begin{thm} \label{thm:subconv}
The projection $\cX_\eq \ni (X,G) \mapsto X \in \cX$
is a proper map with respect to $\square$, i.e.,
for given $(X_n,G_n) \in \cX_\eq$, $n=1,2,\dots$ and
$Y \in \cX$,
if $X_n$ box converges to $Y$ as $n\to\infty$, then
there exists a closed subgroup $H \subset \Aut(X)$
such that 
$(X_n,G_n)$ equivariantly box converges to
$(Y,H)$.
\end{thm}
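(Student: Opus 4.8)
The plan is to extract a limit group $H$ by a compactness argument and then verify that $(X_n, G_n)$ equivariantly box converges to $(Y,H)$. Since $X_n$ box converges to $Y$, I would fix couplings $\pi_n \in \Pi(X_n, Y)$ realizing $\square^{\pi_n}(X_n, Y) = \square(X_n, Y) \to 0$. The key tool is Lemma \ref{lem:Gn-precpt}, which gives precompactness of $\{(G_n, \dKF)\}$ in the Gromov-Hausdorff topology---\emph{provided} we already knew equivariant box convergence to some $(Y,H)$. Since here we only assume box convergence of the underlying spaces, I would first need to reprove the relevant precompactness directly: the same thick-thin argument shows that a $\delta$-discrete net in $G_n$ (with respect to $d^{\pi_n}$) has cardinality bounded independently of $n$, using Lemma \ref{lem:Xneps} and Proposition \ref{prop:box-boxeq} exactly as in the proof of Lemma \ref{lem:Gn-precpt}, but without referring to $H$.

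The main construction is to build $H$ as a limit of the groups $G_n$ transported onto $Y$ via the couplings. For each $g \in G_n$, the graph $\{(x, g(x)) : x \in X_n\} \subset X_n \times X_n$ can be pushed, through $\pi_n$, to a subset of $Y \times Y$; more precisely I would consider the closed subsets of $Y \times Y$ obtained as weak-Hausdorff limits of the sets $S^{-1} \circ (\text{graph of } g) \circ S$ for suitable $S \subset \supp\pi_n$. Using the sequential compactness of closed subsets under weak Hausdorff convergence (Lemma \ref{lem:weakH-cpt}) together with a diagonal argument over a countable dense family of elements coming from the uniformly bounded nets, I would extract limiting closed relations on $Y$. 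The isometric structure carried by the $d^{\pi_n}$-estimates forces each such limit relation to be (the graph of) an element of $\Aut(Y)$, by the same rigidity computation used in Lemma \ref{lem:dconc-nondeg}: matching distances on a dense set determines an isometry, and Lemma \ref{lem:Aut-conv} guarantees that pointwise limits of elements of $\Aut(Y)$ lie in $\Aut(Y)$. I would define $H$ to be the closure in $\Aut(Y)$ of the set of all elements so obtained, and check that $H$ is a subgroup by passing the group operations of $G_n$ to the limit (composition of relations is compatible with the gluing $\circ$ and with weak Hausdorff limits on the thick part).

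Finally I would verify that $\square((X_n, G_n), (Y, H)) \to 0$ by checking the two conditions of Lemma \ref{lem:box} for a small $\varepsilon$ and large $n$. For the first condition, given $g \in G_n$, the boundedness of the nets lets me assume $g$ is within $\delta$ (in $d^{\pi_n}$) of a net element, whose transported relation is, for large $n$, weak-Hausdorff close to the graph of some $h \in H$; Lemma \ref{lem:lim-supp} ensures that the good set $S$ where the distance functions nearly match has $\pi_n$-measure close to $1$, giving $d^S(g,h) < \varepsilon$. The reverse condition, producing for each $h \in H$ some $g \in G_n$, follows from the very definition of $H$ as a limit of elements arising from the $G_n$, again using that the uniformly bounded discrete nets are also $\varepsilon'$-nets for a comparable $\varepsilon'$.

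I expect the main obstacle to be the construction and closure of $H$ as a genuine subgroup of $\Aut(Y)$: the couplings $\pi_n$ are not isometries, so the transported relations are only approximate graphs, and one must carefully control how the approximate isometry property degrades under composition before passing to the limit. The emphasis in the introduction that ``the groups do not Gromov-Hausdorff converge to the group acting on the limit space with respect to the Ky Fan metric'' signals precisely this delicacy: the weak Hausdorff convergence of transported graphs, rather than $\dKF$-convergence of the groups themselves, is what survives, and reconciling this with the group structure is the heart of the argument.
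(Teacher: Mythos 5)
Your overall skeleton---transport each $g \in G_n$ to a relation on $Y \times Y$, extract limits by weak Hausdorff compactness, show the limits are graphs of elements of $\Aut(Y)$ by the distance-rigidity computation, assemble them into a closed subgroup $H$, and verify the two conditions of Lemma \ref{lem:box}---does track the paper's proof. But there are two genuine gaps. First, your precompactness step is circular. In the proof of Lemma \ref{lem:Gn-precpt}, the thick-thin argument only shows that a $\dKF$-$\delta$-discrete net in $G_n$ is $(v_\varepsilon/2)$-discrete with respect to $d^{\pi_n}$; the bound on its cardinality $N$ is then obtained by comparison with the limit group: $(H,d^{\pi_n})$ is a fixed compact space (the restriction of $d^{\pi_n}$ to $H$ is independent of $n$) and $\square^{\pi_n}((X_n,G_n),(Y,H)) \to 0$ forces $G_n$ into a small Hausdorff neighborhood of $H$. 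Without $H$---which is exactly what you are trying to construct---discreteness alone bounds nothing, so there is no ``same argument without referring to $H$.'' The paper never uses Lemma \ref{lem:Gn-precpt} (nor any net-counting) in this theorem; instead it attaches to each $g \in G_n$ both the relation $\rho_n(g) = S_n \circ g \circ S_n^{-1}$ and the transported coupling $\pi_n^g = \pi_n \circ (\id_{X_n},g)_*\mu_{X_n} \circ \pi_n^{-1} \in \Pi(Y,Y)$, and uses a Blaschke-type compactness: the sets $\{\,\pi_n^g \mid g \in G_n\,\}$ are subsets of the $\dP$-compact space $\Pi(Y,Y)$, hence subconverge in Hausdorff distance to a closed set $\Pi$, from which $H$ is read off. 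This sidesteps uniform net bounds entirely.

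Second, your final verification has a quantitative hole. To apply Lemma \ref{lem:box} you must exhibit, at stage $n$, a closed set $S \subset \supp\pi_n$ with $\pi_n(S) > 1-\varepsilon$ and $d^S(g,h) < \varepsilon$. Weak Hausdorff convergence of the transported relations to the graph of $h$ carries no measure information at all, and Lemma \ref{lem:lim-supp} points in the wrong direction: it bounds the measure of the \emph{limit} set from below by the $\limsup$ of the measures of the approximating sets, so it cannot produce a large good set at a finite stage $n$. The paper obtains the needed control precisely from the couplings it carried along: by the definition of $\delta_n = \dH(\{\,\pi_n^g \mid g \in G_n\,\},\Pi)$ one has $\dP(\pi_n^g,\pi^h) \le \delta_n$ for a suitable $h \in H$, Strassen's theorem turns this into a closed set $T^{g,h}$ with $\omega(T^{g,h}) \ge 1-\delta_n$ on which points match to within $\delta_n$, and the gluing-measure computation \eqref{eq:subconv2} converts this into $\pi_n(S_n^{g,h}) \ge 1-2\varepsilon_n-\delta_n$ together with the distance estimate \eqref{eq:subconv1}. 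Since your construction discards the measures $\pi_n^g$ and keeps only the relations, you have no substitute for this Strassen step; repairing it essentially forces you back to the paper's argument.
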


\begin{proof}
By $\square(X_n,Y) \to 0$,
there are $\pi_n \in \Pi(X_n,Y)$,
closed subsets $S_n \subset X_n \times Y$,
and $\varepsilon_n \to 0+$ such that
$\pi_n(S_n) \ge 1-\varepsilon_n$ and
\[
\sup_{(x_i,y_i) \in S_n} |\,d_{X_n}(x_1,x_2)-d_Y(y_1,y_2)\,|
\le \varepsilon_n.
\]

For any $g \in G_n$ we put
$\rho_n(g) := S_n\circ g\circ S_n^{-1} \subset Y \times Y$
and
$\pi_n^g := \pi_n \circ (\id_{X_n},g)_*\mu_{X_n} \circ \pi_n^{-1}
\in \Pi(Y,Y)$, where we define $tr(y,y') := (y',y)$ for $y,y' \in Y$
and $\pi_n^{-1} := tr_*\pi_n$.
It follows from the $\dP$-compactness of $\Pi(Y,Y)$ that
if we take a subsequence of $\{n\}$, then
$\{\,\pi_n^g \mid g \in G_n\,\}$ converges to
a closed subset $\Pi \subset \Pi(Y,Y)$ in the Hausdorff sense.
Take any $\pi \in \Pi$.
There is a sequence $g_n \in G_n$ with $\pi_n^{g_n} \to \pi$.
Taking a subsequence, we see that
$\rho_n(g_n)$ converges to
a closed subset $\hat{h} \subset Y \times Y$ in the weak Hausdorff sense.  By Lemma \ref{lem:lim-supp}, we have
$\pi(\hat{h}) \ge \lim_{n\to\infty} \pi_n^{g_n}(\rho_n(g_n)) = 1$
and so $\hat{h} \supset \supp\pi$.
For any $y_i \in \Dom(\rho_n(g_n))$
and $y_i' \in \rho_n(g_n)(y_i)$, $i=1,2$,
there are $x_1,x_2 \in X_n$ such that $(x_i,y_i) \in S_n$ and
$(g_n(x_i),y_i') \in S_n$ for $i=1,2$.  Therefore,
\begin{align*}
|\,d_{X_n}(x_1,x_2) - d_Y(y_1,y_2)\,| \le \varepsilon_n
\ \text{and}\ 
|\,d_{X_n}(g_n(x_1),g_n(x_2)) - d_Y(y_1',y_2')\,| \le \varepsilon_n.
\end{align*}
Since $g_n$ is an isometry, these inequalities together imply
\begin{equation} \label{eq:subconv}
|\,d_Y(y_1,y_2) - d_Y(y_1',y_2')\,| \le 2\varepsilon_n.
\end{equation}

Taking a subsequence we assume that
$\varepsilon_n \le 1/2^n$ for every $n$.\\
Since $\mu_Y(\Dom(\rho_n(g_n))) \ge 1-\pi_n(S_n)-\pi_n^{-1}(S_n^{-1})
\ge 1-2\varepsilon_n$, setting
\[
D := \bigcup_{m=1}^\infty \bigcap_{n=m}^\infty \Dom(\rho_n(g_n))
\]
we see that
\[
\mu_Y(Y \setminus D)
\le \mu_Y\left( \bigcup_{n=m}^\infty (Y \setminus \Dom(\rho_n(g_n))\right) \le 2\sum_{n=m}^\infty \varepsilon_n
= \frac{4}{2^m} \to 0 \ \text{as}\ m\to\infty,
\]
which implies $\mu_Y(D) = 1$.
For any fixed $y_1,y_2 \in D$, since \eqref{eq:subconv}
holds for infinitely many $n$, we have
$d_Y(y_1,y_2) = d_Y(y_1',y_2')$.
Moreover, by the closedness of $\hat{h}$,
the set $\hat{h}$ is the graph of an isometry, say $h$,
and satisfies $\supp\pi = \hat{h}$.
we have $(\id_Y,h)_*\mu_Y = \pi$,
which implies $h_*\mu_Y = \mu_Y$ and then
$h \in \Aut(Y)$.
Thus, the support of any measure in $\Pi$ is the graph
of an element of $\Aut(Y)$.
Let $H$ be the set of all such maps of $\Aut(Y)$.

Let us prove the closedness of $H$ with respect to $\dKF$.
Assume that a sequence $h_n \in H$ converges in measure
to a map $h \in \Aut(Y)$.
It holds that $\dP((\id_Y,h_n)_*\mu_Y,(\id_Y,h)_*\mu_Y)
\le \dKF((\id_Y,h_n),(\id_Y,h)) = \dKF(h_n,h) \to 0$.
Since $(\id_Y,h_n)_*\mu_Y \in \Pi$
and by the compactness of $\Pi$,
we have $(\id_Y,h)_*\mu_Y \in \Pi$ and so $h \in H$.
This proves the closedness of $H$.

Let us next prove that $H$ is a subgroup of $\Aut(Y)$.
Taking any $h, h' \in H$ we first prove $h'\circ h \in H$.
By taking a subsequence of $\{n\}$,
there are $g_n,g_n' \in G_n$ such that
$\rho_n(g_n)$ and $\rho_n(g_n')$ respectively
converge to the graphs of $h$ and $h'$
in the weak Hausdorff sense.
Taking a subsequence again, we see that
$\rho_n(g_n'\circ g_n)$ converges to the graph of a map $k \in H$
in the weak Hausdorff sense.
It suffices to prove $h' \circ h = k$.
We put $D_n := \Dom(\rho_n(g_n) \circ \rho_n(g_n'))
\cap \Dom(\rho_n(g_n'\circ g_n))$.
For any $x \in D_n$, there are $y,z \in Y$ such that
$(x,y) \in \rho_n(g_n)$ and $(y,z) \in \rho_n(g_n')$.
Also, there are $x',y' \in X_n$ such that
$(x',x),(g_n(x'),y),(y',y),(g_n'(y'),z) \in S_n$.
There is $z' \in Y$ such that
$(x,z') \in \rho_n(g_n'\circ g_n)$,
and there is $x'' \in X_n$ such that
$(x'',x),(g_n'\circ g_n(x''),z') \in S_n$.
Since
$d_{X_n}(g_n'\circ g_n(x'),g_n'(y')) = d_{X_n}(g_n(x'),y') \le \varepsilon_n$
and
$d_{X_n}(g_n'\circ g_n(x'),g_n'\circ g_n(x'')) = d_{X_n}(x',x'')
\le \varepsilon_n$, we see, by a triangle inequality,
$d_{X_n}(g_n'\circ g_n(x''),g_n'(y')) \le 2\varepsilon_n$,
which proves
\begin{equation} \label{eq:subgp}
d_Y(z,z') \le 3\varepsilon_n.
\end{equation}
In the same way as before, taking a subsequence of $\{n\}$,
we may assume that
$D := \bigcup_{m=1}^\infty \bigcap_{n=m}^\infty D_n$
has $\mu_Y$-measure equal to one.
By \eqref{eq:subgp},
we have $h'\circ h(x) = k(x)$ for any $x \in D$,
which implies $h' \circ h = k \in H$.
A similar (simpler) discussion leads us to $h^{-1} \in H$.
Thus, $H$ is a subgroup of $\Aut(Y)$.

Let us prove that $\square((X_n,G_n),(Y,H)) \to 0$.
We set $\delta_n := \dH(\{\,\pi_n^g \mid g \in G_n\,\},\Pi)$,
which is the Hausdorff distance with respect to $\dP$.
Let $g \in G_n$ and $h \in H$ be such that
$\dP(\pi_n^g,\pi^h) \le \delta_n$,
where $\pi^h := (\id_Y,h)_*\mu_Y \in \Pi$.
It follows from Strassen's theorem that
there are a coupling $\omega$ between $\pi_n^g$ and $\pi^h$
and a closed subset $T^{g,h} \subset \supp\omega$ with
$\omega(T^{g,h}) \ge 1-\delta_n$ such that
$d_{Y\times Y}((y,y'),(\eta,\eta')) \le \delta_n$
for any $(y,y',\eta,\eta') \in T^{g,h}$,
where $d_{Y\times Y}$ denotes the $l^2$-product metric of $d_Y$.
Note that $\eta' = h(\eta)$ holds.  We put
\begin{align*}
S_n^{g,h} :=
\{\,(x,y) \in S_n \mid (g(x),y') \in S_n,
\ (y,y',\eta,h(\eta)) \in T^{g,h}\ \text{for some $y',\eta \in Y$}\,\}.
\end{align*}
For any $(x_i,y_i) \in S_n^{g,h}$, $i=1,2$,
there are $y_1',\eta \in Y$ such that
$(g(x_1),y_1') \in S_n$, $(y_1,y_1',\eta,h(\eta)) \in T^{g,h}$.
Since
\begin{align*}
& |\,d_{X_n}(g(x_1),x_2) - d_Y(y_1',y_2)\,| \le \varepsilon_n,\\
& d_Y(y_1',h(y_1)) \le d_Y(y_1',h(\eta)) + d_Y(h(\eta),h(y_1)) \le 2\delta_n,
\end{align*}
we have
\begin{equation} \label{eq:subconv1}
|\,d_{X_n}(g(x_1),x_2) - d_Y(h(y_1),y_2)\,| \le \varepsilon_n + 2\delta_n.
\end{equation}

We are going to estimate $\pi_n(S_n^{g,h})$.
Note that
\begin{align*}
S_n^{g,h} =
\{\,(x,y) \in S_n \mid (x,y') \in S_n \circ g,
\ (y,y') \in p_{12}(T^{g,h})\ \text{for some $y' \in Y$}\,\}.
\end{align*}
Let
\begin{align*}
\tilde{S}_n^{g,h} &:= \{\,(y,x,y') \in Y \times X_n \times Y \mid
(x,y) \in S_n,
\ (x,y') \in S_n \circ g,\ (y,y') \in p_{12}(T^{g,h})\,\}\\
&= (S_n^{-1} \times Y) \cap (Y \times S_n\circ  g)
\cap [p_{12}(T^{g,h}) \times X_n]_{23},\\
\tilde{\pi}_n^g &:= (\pi_n \circ (\id_{X_n},g)_*\mu_{X_n}) \bullet \pi_n^{-1},
\end{align*}
where $[...]_{23}$ means to exchange the second and third
components.
Since
$S_n^{g,h} = p_{21}(\tilde{S}_n^{g,h})$,
$\pi_n(S_n) \ge 1-\varepsilon_n$,
$\pi_n^g(p_{12}(T^{g,h})) \ge \omega(T^{g,h}) \ge 1-\delta_n$,
$(p_{13})_*\tilde{\pi}_n^g = \pi_n^g$,
$(p_{12})_*\tilde{\pi}_n^g = \pi_n^{-1}$, and
$(p_{23})_*\tilde{\pi}_n^g = \pi_n \circ (\id_{X_n},g)_*\mu_X$,
we have
\begin{align} \label{eq:subconv2}
&\pi_n(S_n^{g,h}) = (p_{21})_*\tilde{\pi}_n^g(p_{21}(\tilde{S}_n^{g,h}))
= \tilde{\pi}_n^g(p_{21}^{-1}(p_{21}(\tilde{S}_n^{g,h})))
\ge \tilde{\pi}_n^g(\tilde{S}_n^{g,h})\\
&= 1-\tilde{\pi}_n^g((S_n^{-1} \times Y)^c \cup (Y \times S_n\circ  g)^c \cup [p_{12}(T^{g,h}) \times X_n]_{23}^c) \notag\\
&\ge 1 - (1-\tilde{\pi}_n^g(S_n^{-1} \times Y))
- (1-\tilde{\pi}_n^g(Y \times S_n\circ  g)) \notag\\
&\qquad - (1-\tilde{\pi}_n^g([p_{12}(T^{g,h}) \times X_n]_{23})) \notag\\
&= \pi_n^{-1}(S_n^{-1}) + \pi_n \circ (\id_{X_n},g)_*\mu_X(S_n\circ g)
+ \pi_n^g(p_{12}(T^{g,h})) - 2 \notag\\
&\ge 2\pi_n(S_n) + (\id_{X_n},g)_*\mu_X(g) + \pi_n^g(p_{12}(T^{g,h})) - 3 \notag\\
&\ge 1-2\varepsilon_n-\delta_n. \notag
\end{align}
By remarking the definition of $\delta_n$,
for any $g \in G_n$ there is $h \in H$
such that
\eqref{eq:subconv1} and \eqref{eq:subconv2} hold.
Also, for any $h \in H$ there is $g \in G_n$ such that
\eqref{eq:subconv1} and \eqref{eq:subconv2} hold.
This completes the proof of the theorem.
\end{proof}

As a consequence of
Theorem \ref{thm:subconv}, Proposition \ref{prop:box-boxeq},
and the completeness of $(\cX,\square)$
(see \cite{Sy:mmg}*{Theorem 4.14}),
we have the following.

\begin{cor}
$\cX_\eq$ is complete with respect to
the equivariant box metric.
\end{cor}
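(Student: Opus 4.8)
The plan is to verify the standard Cauchy-sequence criterion for completeness. Let $(X_n,G_n) \in \cX_\eq$, $n=1,2,\dots$, be a Cauchy sequence with respect to $\square$. First I would push the Cauchy condition down to $\cX$: by Proposition \ref{prop:box-boxeq} we have
\[
\square(X_n,X_m) \le 2\,\square((X_n,G_n),(X_m,G_m))
\]
for all $n,m$, so $\{X_n\}$ is a Cauchy sequence in $(\cX,\square)$. Since $(\cX,\square)$ is complete by \cite{Sy:mmg}*{Theorem 4.14}, the sequence $X_n$ box converges to some $Y \in \cX$.

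Next I would lift this convergence back to the equivariant setting. Because $X_n$ box converges to $Y$, Theorem \ref{thm:subconv} supplies a closed subgroup $H \subset \Aut(Y)$ for which $(X_n,G_n)$ equivariantly box converges to $(Y,H) \in \cX_\eq$. In particular the Cauchy sequence $(X_n,G_n)$ possesses a limit in $\cX_\eq$, which is exactly what completeness requires.

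The point requiring the most care is that the construction of $H$ in the proof of Theorem \ref{thm:subconv} proceeds by repeatedly passing to subsequences, so a priori one obtains an equivariant box limit only along a subsequence, and different subsequences could conceivably produce different limit groups. This is harmless here, since a Cauchy sequence that admits a subsequence converging to some point necessarily converges to that same point. Concretely, given $\delta > 0$ I would choose $N$ with $\square((X_n,G_n),(X_m,G_m)) < \delta/2$ for all $n,m \ge N$, and then pick an index $n_k \ge N$ from the convergent subsequence with $\square((X_{n_k},G_{n_k}),(Y,H)) < \delta/2$; the triangle inequality (Lemma \ref{lem:box-triangle}) then gives $\square((X_n,G_n),(Y,H)) < \delta$ for every $n \ge N$. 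Hence the full sequence equivariantly box converges to $(Y,H)$, and therefore $\cX_\eq$ is complete with respect to the equivariant box metric.
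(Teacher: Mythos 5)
Your proof is correct and follows exactly the route the paper intends: Proposition \ref{prop:box-boxeq} to transfer the Cauchy condition to $(\cX,\square)$, completeness of $(\cX,\square)$ to obtain the limit $Y$, and Theorem \ref{thm:subconv} to produce the closed subgroup $H$ with $(X_n,G_n)$ equivariantly box converging to $(Y,H)$. Your extra care about the subsequence extractions in the proof of Theorem \ref{thm:subconv}, resolved via the standard Cauchy-plus-convergent-subsequence argument, is a sound (and welcome) clarification of a point the paper leaves implicit.
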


\section{Application}

In this section, we prove that
a sequence of lens spaces with unbounded dimension 
box converges
to the cone of an infinite-dimensional complex projective space,
as an application of Theorem \ref{thm:box-quot}.

Consider the sequence of ellipsoids:
\begin{align*}
& E_j := \{\; z \in \C^{n(j)} \mid
\sum_{i=1}^{n(j)} \frac{|z_i|^2}{\alpha_{ij}^2} = 1 \;\},\\
& \alpha_{ij} > 0, \ i=1,2,\dots,n(j), \  j = 1,2,\dots.
\end{align*}
We equip each $E_j$ with the push-forward measure of
the normalized volume measure on the unit sphere
$S^{n(j)}(1)$ by the linear transformation
and also with the restriction of the Euclidean distance function.
Then, $E_j$ is an mm-space.

Assume that $n(j) \to \infty$ as $j \to \infty$
and set $a_{ij} := \alpha_{ij}/\sqrt{n(j)}$.
We also assume $\sum_{i=1}^\infty a_i^2 < \infty$.
Then, the Gaussian space
$\Gamma^\infty_{\{a_i\}} := (H,\|\cdot\|_2,\gamma^\infty_{\{a_i\}})$ is an mm-space, where
$H := \{\,z \in \C^\infty \mid \|z\|_2 < +\infty\,\}$
is a Hilbert space and
$\gamma^\infty_{\{a_i^2\}} := \bigotimes_{i=1}^\infty (\gamma^1_{a_i^2}\otimes \gamma^1_{a_i^2})$
is the infinite product of
the one-dimensional Gaussian measure $\gamma^1_{a_i^2}$
of variance $a_i^2$.
The $U(1)$-Hopf action on $H$
(which is defined as scalar multiplication of unit complex number)
defines an mm-action on
$\Gamma^\infty_{\{a_i\}}$ and also on $E_j$.
By the homomorphism
$\Z_j := \Z/j\Z \ni [k] \mapsto e^{\sqrt{-1}k/j} \in U(1)$,
we define the mm-action of $\Z_j$ on $E_j$.
Then the quotient space $E_j/\Z_j$ is diffeomorphic to
a lens space.
The quotient space $\Gamma^\infty_{\{a_i^2\}}/U(1)$
is homeomorphic to the cone of
the infinite-dimensional complex projective space.

\begin{thm} \label{thm:lens}
If $\lim_{j\to\infty} \sum_{i=1}^{n(j)} (a_{ij}-a_i)^2 = 0$,
then, as $j\to\infty$,
\begin{enumerate}
\item $(E_j,\Z_j)$ equivariantly box converges to
$(\Gamma^\infty_{\{a_i\}},U(1))$.
\item $E_j/\Z_j$ box converges to $\Gamma^\infty_{\{a_i\}}/U(1)$.
\end{enumerate}
\end{thm}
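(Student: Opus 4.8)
The plan is to establish (1) by an explicit mass-transport construction and to deduce (2) from (1) together with Theorem \ref{thm:box-quot}: once $(E_j,\Z_j)$ equivariantly box converges to $(\Gamma^\infty_{\{a_i\}},U(1))$, the quotients $E_j/\Z_j$ automatically box converge to $\Gamma^\infty_{\{a_i\}}/U(1)$. So essentially all the work is in (1). The first observation is that $H$ contains every $E_j$ once $\C^{n(j)}$ is identified with the first $n(j)$ coordinates of $\C^\infty$; then all the measures live on the common space $H$, and $d_{E_j}$ and $d_{\Gamma^\infty_{\{a_i\}}}$ are both restrictions of $\|\cdot\|_2$, which is what makes one coupling comparable. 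I would couple by sampling a single sequence $h=(h_1,h_2,\dots)$ of i.i.d.\ complex Gaussians (normalised so that $E|h_i|^2=1$) and forming two points of $H$: the ellipsoid point $z(h):=\mathrm{diag}(\alpha_{1j},\dots,\alpha_{n(j)j})\,h^{(j)}/\|h^{(j)}\|_2$, where $h^{(j)}:=(h_1,\dots,h_{n(j)})$, whose law is exactly $\mu_{E_j}$ (a Gaussian normalised to the unit sphere is uniform there, and $z(h)$ is its image under the diagonal scaling), and the Gaussian point $\zeta(h)_i:=a_i h_i$, whose law is $\gamma^\infty_{\{a_i^2\}}$ (after the harmless rescaling matching the unit-sphere radius to the variance normalisation of $\gamma^1$, so that $z(h)_i$ and $\zeta(h)_i$ carry the same leading coefficient of $h_i$). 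Let $\pi_j\in\Pi(E_j,\Gamma^\infty_{\{a_i\}})$ be the joint law of $(z(h),\zeta(h))$.

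Writing $c_j:=\sqrt{n(j)}/\|h^{(j)}\|_2$, a positive scalar, the two points differ coordinatewise only by real factors, and a short computation gives
\[
\|z(h)-\zeta(h)\|_2^2 \le 2(c_j-1)^2\!\!\sum_{i\le n(j)}\!\!a_{ij}^2|h_i|^2
+2\!\!\sum_{i\le n(j)}\!\!(a_{ij}-a_i)^2|h_i|^2
+\sum_{i>n(j)}\!\!a_i^2|h_i|^2 .
\]
I would show each term tends to $0$ in probability. The law of large numbers gives $\|h^{(j)}\|_2^2/n(j)\to 1$, hence $c_j\to 1$, while $\sum_{i\le n(j)}a_{ij}^2|h_i|^2$ has bounded expectation $\sum_i a_{ij}^2\to\sum_i a_i^2<\infty$ and concentrates, so the first term vanishes; the second has expectation $\sum_{i\le n(j)}(a_{ij}-a_i)^2\to 0$ by hypothesis; the tail has expectation $\sum_{i>n(j)}a_i^2\to 0$. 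Thus $\|z(h)-\zeta(h)\|_2\to 0$ in probability, so for each $\varepsilon>0$ the closed set $S_j:=\{(z,\zeta)\in\supp\pi_j:\|z-\zeta\|_2\le\varepsilon,\ \|\zeta\|_2\le R\}$ has $\pi_j(S_j)\to 1$ (using $\int\|\zeta\|_2^2\,d\gamma^\infty_{\{a_i^2\}}=\sum_i a_i^2<\infty$ to fix a large $R$ with $\gamma^\infty_{\{a_i^2\}}(\|\zeta\|_2\le R)\ge 1-\varepsilon$), and on $S_j$ the distortion obeys $|\,d_{E_j}(z_1,z_2)-d_{\Gamma^\infty_{\{a_i\}}}(\zeta_1,\zeta_2)\,|\le\|z_1-\zeta_1\|_2+\|z_2-\zeta_2\|_2\le 2\varepsilon$. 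This already yields $\square(E_j,\Gamma^\infty_{\{a_i\}})\to 0$.

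To promote this to the equivariant statement I would use that $\pi_j$ is invariant under the diagonal $U(1)$-action: multiplying $h$ by a unit $\theta$ fixes $\|h^{(j)}\|_2$, so $z(\theta h)=\theta z(h)$ and $\zeta(\theta h)=\theta\zeta(h)$. I then verify the two conditions of Lemma \ref{lem:box} for $\square^{\pi_j}((E_j,\Z_j),(\Gamma^\infty_{\{a_i\}},U(1)))$. For (i), given $g\in\Z_j$ (multiplication by a root of unity $\theta$), take the same rotation $h:=\theta\in U(1)$; on $S_j$, $|\,\|\theta z_1-z_2\|_2-\|\theta\zeta_1-\zeta_2\|_2\,|\le\|z_1-\zeta_1\|_2+\|z_2-\zeta_2\|_2\le 2\varepsilon$, so $d^{S_j}(g,h)\le 2\varepsilon$. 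For (ii), given multiplication by $\theta\in U(1)$, choose the nearest $j$-th root of unity $\omega\in\Z_j$, so that $|\omega-\theta|\le\pi/j$; the only extra error is $|\omega-\theta|\,\|z_1\|_2$, and since $\|z_1\|_2\le R+\varepsilon$ on $S_j$ this is $\le(\pi/j)(R+\varepsilon)\to 0$. Hence both conditions hold for $j$ large and $\square^{\pi_j}\to 0$, proving (1); then (2) is immediate from Theorem \ref{thm:box-quot}.

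The main obstacle is the probabilistic estimate in the second paragraph: the radial normalisation $c_j$ multiplies the coordinates of a vector whose dimension $n(j)\to\infty$, so one must verify both that the accompanying weights $\sum_{i\le n(j)}a_{ij}^2$ stay bounded and that $c_j$ concentrates fast enough that $(c_j-1)^2\sum a_{ij}^2|h_i|^2$ genuinely vanishes. The twin hypotheses $\sum_i a_i^2<\infty$ and $\sum_{i\le n(j)}(a_{ij}-a_i)^2\to 0$ are exactly what control, respectively, the tail and the shape-mismatch terms, and the former is also what allows the truncation to $\{\|\zeta\|_2\le R\}$ in step (ii), so that the $(\pi/j)$-density of $\Z_j$ in $U(1)$ suffices.
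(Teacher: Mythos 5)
Your proof is correct, and its overall architecture coincides with the paper's: build a coupling $\pi_j\in\Pi(E_j,\Gamma^\infty_{\{a_i\}})$ inside the common Hilbert space $H$ under which the two marginal points are $\|\cdot\|_2$-close with high probability, verify the two conditions of Lemma \ref{lem:box} (taking $h=g$ for $g\in\Z_j\subset U(1)$, and the nearest $j$-th root of unity for a given $\theta\in U(1)$, with the extra error $|\omega-\theta|\,\|z_1\|_2$ killed by a norm truncation), and then get (2) from Theorem \ref{thm:box-quot}. The genuine difference is in how the coupling is produced. The paper simply cites \cite{KS:ellipsoid}*{Proposition 4.2}, which gives $2$-Wasserstein convergence of $\mu_{E_j}$ to $\gamma^\infty_{\{a_i^2\}}$ on $H$, and compresses the equivariant bookkeeping into ``an easy discussion.'' You instead reprove the analytic input from scratch: the explicit coupling $h\mapsto(z(h),\zeta(h))$ by a single i.i.d.\ Gaussian sequence, with the displacement split into the radial-normalization term $(c_j-1)^2\sum a_{ij}^2|h_i|^2$, the shape-mismatch term $\sum(a_{ij}-a_i)^2|h_i|^2$, and the tail $\sum_{i>n(j)}a_i^2|h_i|^2$, each controlled by the law of large numbers, the hypothesis, and $\sum a_i^2<\infty$ respectively. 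This buys self-containedness and makes transparent exactly which hypothesis controls which error, and in addition your last paragraph supplies in full the equivariant verification that the paper leaves implicit; what it costs is length and some care with normalization conventions --- your ``harmless rescaling'' remark papers over a genuine factor-of-$\sqrt{2}$ discrepancy between the sphere normalization $a_{ij}=\alpha_{ij}/\sqrt{n(j)}$ and the paper's definition of $\gamma^\infty_{\{a_i^2\}}$ as a product of \emph{two} real Gaussians of variance $a_i^2$ per complex coordinate, but this is a conventions issue already latent in the paper's own setup and does not affect the structure of either argument.
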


\begin{proof}
We prove (1).
By \cite{KS:ellipsoid}*{Proposition 4.2},
the measure of $E_j$ converges to
$\gamma^\infty_{\{a_i^2\}}$ on $H$ with respect to
the $2$-Wasserstein metric.
Therefore, for any fixed $\varepsilon > 0$,
if $j$ is large enough, then
there are $\pi \in \Pi(E_j,\Gamma^\infty_{\{a_i^2\}})$ and
a closed subset $S \subset \supp\pi$ such that
$\pi(S) \ge 1-\varepsilon$ and
$\|x-x'\|_2 \le \varepsilon$ for any $(x,x') \in S$.
We observe that $d^S(g,g) \le \varepsilon$ for any $g \in U(1)$,
which together with an easy discussion proves (1).

(2) follows from (1) and Theorem \ref{thm:box-quot}.
\end{proof}

\begin{prob} \label{prob}
If $\lim_{j\to\infty}a_{ij} = a_i$,
does
$\dconc((E_j,\Z_j),(\Gamma^\infty_{\{a_i\}},U(1))) \to 0$
hold?
\end{prob}

\section{Further problem}

Gromov \cites{Gmv:green, Sy:mmg} constructs
a natural compactification of $\cX$ by using pyramids.
It is a natural problem if we have the same construction
for $\cX_\eq$.
If that is obtained, then we are able to solve Problem \ref{prob}
affirmatively.
For that, it is possible to define the Lipschitz order relation
on $\cX_\eq$ and a pyramid in a suitable way.
However, we have many obstacles.
For instance, we do not know if the limit of pyramids
is a pyramid.  Also it seems to be difficult
to formulate the concept of measurement with group action
for $\cX_\eq$.

\begin{bibdiv}
  \begin{biblist}    

\bib{AR:top-stab}{article}{
author={Arbieto, Alexanger},
author={Rojas, Carlos Arnoldo Morales},
title={Topological stability from Gromov-Hausdorff viewpoint},
}

\bib{Bog:Gaussian}{book}{
   author={Bogachev, Vladimir I.},
   title={Gaussian measures},
   series={Mathematical Surveys and Monographs},
   volume={62},
   publisher={American Mathematical Society},
   place={Providence, RI},
   date={1998},
   pages={xii+433},
   isbn={0-8218-1054-5},
}

\bib{Fk:Theory}{article}{
   author={Fukaya, Kenji},
   title={Theory of convergence for Riemannian orbifolds},
   journal={Japan. J. Math. (N.S.)},
   volume={12},
   date={1986},
   number={1},
   pages={121--160},
   issn={0289-2316},
   review={\MR{914311}},
   doi={10.4099/math1924.12.121},
}

\bib{Fk:Laplace}{article}{
   author={Fukaya, Kenji},
   title={Collapsing of Riemannian manifolds and eigenvalues of Laplace
   operator},
   journal={Invent. Math.},
   volume={87},
   date={1987},
   number={3},
   pages={517--547},
   issn={0020-9910},
   doi={10.1007/BF01389241},
}

\bib{FY:f-gp}{article}{
   author={Fukaya, Kenji},
   author={Yamaguchi, Takao},
   title={The fundamental groups of almost non-negatively curved manifolds},
   journal={Ann. of Math. (2)},
   volume={136},
   date={1992},
   number={2},
   pages={253--333},
   issn={0003-486X},
   doi={10.2307/2946606},
}


\bib{GMS:conv}{article}{
   author={Gigli, Nicola},
   author={Mondino, Andrea},
   author={Savar\'{e}, Giuseppe},
   title={Convergence of pointed non-compact metric measure spaces and
   stability of Ricci curvature bounds and heat flows},
   journal={Proc. Lond. Math. Soc. (3)},
   volume={111},
   date={2015},
   number={5},
   pages={1071--1129},
   issn={0024-6115},
   doi={10.1112/plms/pdv047},
}

\bib{Gmv:green}{book}{
   author={Gromov, Misha},
   title={Metric structures for Riemannian and non-Riemannian spaces},
   series={Modern Birkh\"auser Classics},
   edition={Reprint of the 2001 English edition},
   note={Based on the 1981 French original;
   With appendices by M. Katz, P. Pansu and S. Semmes;
   Translated from the French by Sean Michael Bates},
   publisher={Birkh\"auser Boston Inc.},
   place={Boston, MA},
   date={2007},
   pages={xx+585},
   isbn={978-0-8176-4582-3},
   isbn={0-8176-4582-9},
}



\bib{KS:ellipsoid}{article}{
author={Kazukawa, Daisuke},
author={Shioya, Takashi},
title={High-dimensional ellipsoids converge to Gaussian spaces},
eprint={arXiv:2003.05105 },
}


\bib{LV:Ric}{article}{
   author={Lott, John},
   author={Villani, C\'{e}dric},
   title={Ricci curvature for metric-measure spaces via optimal transport},
   journal={Ann. of Math. (2)},
   volume={169},
   date={2009},
   number={3},
   pages={903--991},
   issn={0003-486X},
   doi={10.4007/annals.2009.169.903},
}

\bib{Nk:thesis}{thesis}{
title={Isoperimetric inequality and Lipschitz order},
    author={Nakajima, Hiroki},
    note={Doctral thesis},
    organization={Tohoku University},
    year={2019},
}


\bib{Nk:box-obs}{article}{
title={Box distance and observable distance via optimal transport},
author={Nakajima, Hiroki},
note={in preparation},
}


		
\bib{OzSy:limf}{article}{
   author={Ozawa, Ryunosuke},
   author={Shioya, Takashi},
   title={Limit formulas for metric measure invariants and phase transition
   property},
   journal={Math. Z.},
   volume={280},
   date={2015},
   number={3-4},
   pages={759--782},
   issn={0025-5874},
   doi={10.1007/s00209-015-1447-2},
}


\bib{P:inf-gp}{book}{
   author={Pestov, Vladimir},
   title={Dynamics of infinite-dimensional groups},
   series={University Lecture Series},
   volume={40},
   note={The Ramsey-Dvoretzky-Milman phenomenon;
   Revised edition of {\it Dynamics of infinite-dimensional groups and
   Ramsey-type phenomena} [Inst. Mat. Pura. Apl. (IMPA), Rio de Janeiro,
   2005; MR2164572]},
   publisher={American Mathematical Society, Providence, RI},
   date={2006},
   pages={viii+192},
   isbn={978-0-8218-4137-2},
   isbn={0-8218-4137-8},
   doi={10.1090/ulect/040},
}

\bib{St:geomI}{article}{
   author={Sturm, Karl-Theodor},
   title={On the geometry of metric measure spaces. I},
   journal={Acta Math.},
   volume={196},
   date={2006},
   number={1},
   pages={65--131},
   issn={0001-5962},
   doi={10.1007/s11511-006-0002-8},
}

\bib{St:geomII}{article}{
   author={Sturm, Karl-Theodor},
   title={On the geometry of metric measure spaces. II},
   journal={Acta Math.},
   volume={196},
   date={2006},
   number={1},
   pages={133--177},
   issn={0001-5962},
   doi={10.1007/s11511-006-0003-7},
}
		
\bib{Sy:mmg}{book}{
   author={Shioya, Takashi},
   title={Metric measure geometry},
   series={IRMA Lectures in Mathematics and Theoretical Physics},
   volume={25},
   note={Gromov's theory of convergence and concentration of metrics and
   measures},
   publisher={EMS Publishing House, Z\"urich},
   date={2016},
   pages={xi+182},
   isbn={978-3-03719-158-3},
   doi={10.4171/158},
}

\bib{Sy:mmlim}{article}{
   author={Shioya, Takashi},
   title={Metric measure limits of spheres and complex projective spaces},
   conference={
      title={Measure theory in non-smooth spaces},
   },
   book={
      series={Partial Differ. Equ. Meas. Theory},
      publisher={De Gruyter Open, Warsaw},
   },
   date={2017},
   pages={261--287},
}

\bib{SyTk:stiefel}{article}{
author={Shioya, Takashi},
author={Takatsu, Asuka},
title={High-dimensional metric-measure limit of Stiefel and flag manifolds},
   journal={Math. Z.},
   volume={288},
   date={2018},
   pages={1--35},
   issn={0025-5874},
   doi={10.1007/s00209-018-2044-y},
}

  \end{biblist}
\end{bibdiv}

\end{document}